\numberwithin{equation}{section}
\newcommand\s{\sigma}
\renewcommand\d{\partial}
\renewcommand\a{\alpha}
\renewcommand\b{\beta}
\renewcommand\o{\omega}
\def\g{\gamma}
\def\t{\tau}
\def\l{\lambda}
\def\eps{\varepsilon }
\def\e{\varepsilon}
\renewcommand\d{\partial}
\renewcommand\a{\alpha}
\renewcommand\b{\beta}
\renewcommand\o{\omega}
\def\g{\gamma}
\def\t{\tau}
\def\eps{\varepsilon}
\def\e{\varepsilon}
\def\l{\lambda}
\newcommand\br{\begin{remark}}
\newcommand\er{\end{remark}}
\newcommand\bp{\begin{pmatrix}}
\newcommand\ep{\end{pmatrix}}
\newcommand{\be}{\begin{equation}}
\newcommand{\ee}{\end{equation}}
\newcommand\ba{\begin{equation}\begin{aligned}}
\newcommand\ea{\end{aligned}\end{equation}}
\newcommand{\bap}{\begin{app}}
\newcommand{\eap}{\end{app}}
\newcommand{\begs}{\begin{exams}}
\newcommand{\eegs}{\end{exams}}
\newcommand{\beg}{\begin{example}}
\newcommand{\eeg}{\end{exaplem}}
\newcommand{\bpr}{\begin{proposition}}
\newcommand{\epr}{\end{proposition}}
\newcommand{\bt}{\begin{theorem}}
\newcommand{\et}{\end{theorem}}
\newcommand{\bc}{\begin{corollary}}
\newcommand{\ec}{\end{corollary}}
\newcommand{\bl}{\begin{lemma}}
\newcommand{\el}{\end{lemma}}
\newcommand{\bd}{\begin{definition}}
\newcommand{\ed}{\end{definition}}
\newcommand{\brs}{\begin{remarks}}
\newcommand{\ers}{\end{remarks}}
\newtheorem{hypothesis}{Hypothesis}
\newcommand{\RR}{{\mathbb R}}
\newcommand{\NN}{{\mathbb N}}
\newcommand{\ZZ}{{\mathbb Z}}
\newcommand{\TT}{{\mathbb T}}
\newcommand{\CC}{{\mathbb C}}
\newcommand{\sC}{\mathscr{C}}
\newcommand{\sN}{\mathscr{N}}
\newcommand{\sQ}{\mathscr{Q}}
\newcommand{\const}{\text{\rm constant}}
\newcommand{\tl}{\tilde{\l}}
\newtheorem{theorem}{Theorem}[section]
\newtheorem{proposition}[theorem]{Proposition}
\newtheorem{corollary}[theorem]{Corollary}
\newtheorem{lemma}[theorem]{Lemma}
\theoremstyle{remark}
\newtheorem{remark}[theorem]{Remark}
\theoremstyle{definition}
\newtheorem{definition}[theorem]{Definition}
\newtheorem{example}[theorem]{Example}
\newtheorem{obs}[theorem]{Observation}
\newcommand\cA{{\mathcal { A}}}
\newcommand\cC{{\mathcal  C}}
\newcommand\cR{{\mathcal  R}}
\newcommand\cL{{\mathcal  L}}
\newcommand\cN{{\mathcal  N}}
\newcommand\cP{{\mathcal  P}}
\newcommand\cQ{{\mathcal Q}}
\newcommand\cO{{\mathcal O}}
\newcommand\cM{{\mathcal M}}
\newcommand{\bbT}{{\mathbb{T}}}
\newcommand{\beq}{\begin{equation}}
\newcommand{\eeq}{\end{equation}}
\newcommand{\Hx}{\hat{x}}
\newcommand{\Ht}{\hat{t}}
\title{Convective Turing Bifurcation}
\author{Aric Wheeler}
\address{Indiana University, Bloomington, IN 47405}
\email{awheele@iu.edu }
\thanks{Research of A.W. was partially supported
under NSF grant no. DMS-1700279.}
\author{Kevin Zumbrun}
\address{Indiana University, Bloomington, IN 47405}
\email{kzumbrun@indiana.edu} 
\thanks{Research of K.Z. was partially supported
under NSF grants no. DMS-0300487 and DMS-0801745.}
\begin{document}

\begin{abstract}
Following the approach pioneered by Eckhaus, Mielke, Schneider, and others for reaction diffusion systems
\cite{E,M1,M2,M3,S1,S2,SZJV}, we 
justify rigorously by Lyapunov-Schmidt reduction the formal amplitude (complex Ginzburg Landau)
equations describing Turing-type bifurcations of general reaction diffusion convection systems,
showing that small spatially periodic traveling wave solutions of the PDE lie asymptotically close to
spatially periodic traveling waves of the amplitude equations, with asymptotically nearby speeds.
Notably, our analysis includes also higher-order, nonlocal, and even certain semilinear hyperbolic systems.
This is the first step in a larger program, laying the groundwork for spectral stability analysis
	\cite{WZ1}, and, ultimately, treatment of systems possessing conservation laws \cite{WZ2,WZ3}.
\end{abstract}

\maketitle
\section{Introduction}
In this paper, motivated by modern problems in biomechanical pattern formation, we revisit the
problem of Turing bifurcation, posed originally in the idealized context of reaction plus diffusion \cite{T},
in the more general context of PDE including mechanical or {\it convective} effects.
Namely, generalizing tools developed in \cite{M1,M2,S1,S2,SZJV,S,MC} for reaction diffusion systems,
we carry out a rigorous version of the formal ``weakly unstable approximation,'' or multiscale expansion 
of Eckhaus \cite{E}, derived originally in the hydrodynamical context of flow about an airfoil,
to obtain a complete description in terms of periodic traveling waves of
the associated ``amplitude equation'' \cite{vH,KSM,M3} consisting of the complex Ginzburg-Landau equation.
For the $O(2)$ symmetric reaction diffusion case of the references, 
this reduces to the real Ginzburg-Landau equation.

More precisely, we show that, near Turing bifurcation,
small spatially periodic traveling wave solutions of the PDE lie asymptotically close to
spatially periodic traveling waves of the associated complex Ginzburg-Landau equation, 
with asymptotically nearby speeds.
In a companion paper \cite{WZ1}, we show that spectral and time-asymptotic nonlinear stability
of bifurcating spatially periodic traveling waves is likewise predicted by the corresponding properties
of their complex Ginzburg-Landau approximants, completing the remaining part of the program of \cite{M1,M2,S1,S2}.

\bigskip

A question of substantial current interest is modeling of morphogenesis in both early and later stages:
e.g., vascularization, during branching, tubule formation, and remodeling/angiogenesis.
As described in \cite{MO,Ma,Mai,SBP,P}, the basic reaction diffusion model of Turing \cite{T} has
given way to various mechanochemical and hydrodynamical models of form 
\be\label{eq1}
\d_t w+\d_x f(w)=r(w)+ \d_x(b(w)\d_x w),
\ee
incorporating also convection, where $r$ and $b$ may in general be of full or partial rank.
For example, a simple version (neglecting shear forces) of the hydrodynamic vasculogenesis model of \cite{SBP} is
	\ba\label{hydro}
	\partial_t n + \nabla \cdot (nu)&=0,\quad
	\partial_t (nu) + \nabla \cdot (nu \otimes u - \bbT_n) 
	=n\nabla c -\tau_0 nu,\quad
	\partial_t c - \Delta c+ \tau_1 c=n,
\ea
where $n$ is density of endothelial cells (EC) lining the interior of blood vessels, 
assumed to be carried passively by the extracellular matrix (ECM)
of smooth muscle cells in the vessel wall \cite{WM}, $u$ is displacement of 
ECM, $\bbT_n$ is cell stress, and $c$ is concentration of chemical attractant.
	For the simplest choice $\bbT_n=\nabla \psi(u)$,
this has been reported to numerically reproduce structures resembling early in vitro networks;
cf.  \cite[Fig. 2, p. 11]{SBP}, \cite[Fig. 3, p. 551]{P}.

The earlier Murray-Oster model \cite{MO} for vasculogenesis is
\ba\label{MOeq}
	\partial_t n + \nabla \cdot (v_cn)&=0, \quad
\partial_t (m) + \nabla \cdot (m v_m)=0,\quad
\nabla \cdot (\bbT_n+ \bbT_m)+F&=0,
\ea
where $n$ and $m$ are density of EC and ECM, 
$v_m=\partial_t X_m$ is ECM velocity, $X$ denoting ECM displacement, $v_c$ is EC velocity determined through
physical/biological considerations as a function of other variables,
$\bbT_j$ are EC and ECM stresses, and $F$ is body force,
with the third equation representing total force balance.
This has been coupled in \cite{Ma} with a chemical attractant $c$ as in \eqref{hydro}(iii), with
reported encouraging correlation between numerical results and in vitro angiogenesis. 

A natural first step, but one that does not seem to have been addressed in the vasculogenesis literature, 
is to study ``initiation'' in the form of bifurcation from a constant solution, or ``Turing-type'' bifurcation,
via ``weakly unstable approximation,'' generalizing \cite{M1,M2,S1,S2,SZJV,S,MC},
of spatially periodic solutions, or ``patterns,'' of \eqref{eq1}.
This approach typically gives also stability information hence could be useful for
in vitro control/tissue engineering in helping choose parameters for which emerging network configurations are stable.

Of course, there are many other examples of pattern formation for models of form \eqref{eq1}, 
including shallow-water flow ($r$ of rank $n-1$) \cite{BJNRZ}; general conservation laws ($r$ of rank $0$) \cite{BJZ},
including general hydrodynamical flows; and flow in binary mixtures \cite{LBK,SZ}.
Hence, the study of Turing bifurcation for \eqref{eq1} is a problem of general interest 
independent of the context of biomorphology.

However, despite wide acceptance of Eckhaus' paradigm of weakly unstable dynamics governed approximately by
a complex Ginzburg-Landau equation \cite{E,AK,M3}, and numerous explicit computations carried out in both the $O(2)$
symmetric reaction diffusion case and the general $SO(2)$ case, and despite the development in \cite{M1,M2,S1,S2} 
of a general method based on Lyapunov-Schmidt reduction capable to rigorously justify
their implications for Turing bifurcation in terms of shape and time-asymptotic stability of bifurcating waves,
the implementation of this rigorous justification seems to have lagged behind.
Indeed, even in the $O(2)$ reaction diffusion case, for which the weakly unstable expansion reduces
to the real Ginzburg-Landau equation (rGL), rigorous justification of the expansion, 
in the global-in-time sense \cite{M1,M2,S1,S2} 
has so far been carried out completely only for a few specific models \cite{M2,S1,SZJV,S},
and none at all to our knowledge in the general $SO(2)$ case.
Thus, there appears to be a need for further analysis, even in the classical
(full-rank) case without conservation laws, most particularly in the presence of convection.
We address this here and in \cite{WZ1}, both for its individual interest in completing
the program of \cite{M1,M2,S1,S2},
and as preparation for the analysis in \cite{WZ2,WZ3} of systems possessing conservation laws.

\br\label{justrmk}
As discussed in \cite[\S 6]{M3}, there are a number of different senses in which
one might pursue rigorous verification of the complex Ginzburg Landau equation, of interest
in different settings.
These can be divided roughly into {\it finite-time approximation} properties for general solutions, 
and {\it global-in-time existence and behavior} for special solutions: the
former stating for all complex Ginzburg-Landau solutions in an appropriate space that there are nearby exact
solutions of the underlying PDE remaining close up to a given finite time $T$, corresponding
to $T/\eps^2$ in the Ginzburg-Landau scaling, where $\eps$ is the order of the bifurcation parameter;
and the latter stating for traveling-wave or periodic solutions of the complex Ginzburg-Landau
equation (cGL) that there exist nearby exact solutions in the same category, whose time-asymptotic stability
properties with respect to the underlying PDE moreover agree with those of the approximating Ginzburg-Landau
solution with respect to (cGL).
These may be recognized as different qualities of center manifolds in finite-dimensional ODE,
supporting the viewpoint \cite{M3} of Ginzburg-Landau as infinite-dimensional center manifold.
Here, \emph{we exclusively discuss the latter, global-in-time notion} relevant to Turing bifurcation.
As regards the former, finite-time approximation notion \cite[\S 6.2]{M3}, there exist a variety of works 
dating back to \cite{vH}, in rather complete generality. 
\er

\bigskip

In the present work, we begin a larger program on initiation in convective morphogenesis--
more generally, bifurcation from constant solutions for systems \eqref{eq1}--
with the analog of Turing's original problem for general PDE depending on a bifurcation parameter $\mu$,
proving existence and closeness to complex Ginzburg-Landau approximations of small
periodic traveling waves, for $\mu$ sufficiently near a bifurcation point $\mu=0$.
In the companion paper \cite{WZ1}, we show that spectral and nonlinear stability
are likewise well-predicted by the complex Ginzburg-Landau approximation, rigorously validating
the famous sideband stability criteria of  Eckhaus \cite{E} for general reaction convection diffusion
systems for which the reaction term is {\it full rank}.
In \cite{WZ2,WZ3} we extend our analysis to the case of {\it non-full-rank} systems with conservation laws, 
as in \cite{MO,Ma,Mai,SBP,P}.

Namely, assuming existence of a smooth family of constant solutions $w_\mu$,  and introducing $u:=w-w_\mu$,
we consider the family of perturbation equations in standard form 
\be\label{std}
u_t=L(\mu)u+\cN(u,\mu),
\ee
where $L(\mu)=\sum_{j=0}^m \cL_j(\mu)\d_x^j$ is a constant-coefficient differential operator
and $\cN$ is a general nonlinear functional of quadratic order in $u$ and $x$-derivatives,
under generalized {\it Turing assumptions} on the spectra of $L$ near the bifurcation point $\mu=0$,
or, equivalently, on the eigenvalues $\tilde \lambda_j(k,\mu)$ of the associated Fourier symbol
$S(k,\mu)=\sum_{j=0}^{m}\cL_j(\mu)(ik)^j$.

These assumptions, detailed in Hypothesis \ref{hyp:Turing} below, ensure that
i) except for a single pure imaginary eigenvalue $\tilde \lambda$ at $k=\pm k_*\neq 0$,
all eigenvalues of $S(k, \mu)$ have strictly negative real part; 
(ii) the symbol is strictly stable as $|k|\to 0$ or $\infty$, so that $0<|k_*|<\infty$; 
and (iii) $\Re \partial_\mu \tilde \lambda(k_*,0)>0$, so that there is a change in stability as $\mu$ crosses
zero from left to right. 
In particular, they imply that $\cL_0(0)$-- $r(w_0)$ in the case of \eqref{eq1}--
must be strictly stable, hence {\it full rank}.
As already noted, {\it this} (full rank) {\it condition is violated for the vasculogenesis models mentioned above},
hence the present study is a preliminary step toward the study of that more degenerate case
(see {\it Discussion}, below).

\subsection{Ginzburg-Landau approximation}\label{s:GLapprox}
Let $r$ denote the eigenvector of $S(k_*,0)$ associated with the critical eigenvalue $\tl(k_*,0)$,
so that (by complex conjugate symmetry, noting that $L$ is real-valued), $\tl(-k_*,0)=\overline{ \tl(k_*,0)}$, with
associated eigenvector $\bar r$.
Then, 
\be\label{exact}
u(x,t)=e^{i(k_*x + \Im \tl(k_*,0)t)}r+ c.c.
\ee
is an exact nondecaying spatially-periodic solution of the linearized equations 
$u_t=L(0)u$ at the bifurcation point $\mu=0$, where, here and elsewhere, $c.c.$ denotes complex conjugate.
By our spectral hypotheses, meanwhile, all other eigenmodes are time-exponentially decaying at varying rates.

With these preliminaries, the ``weakly unstable'' or ``weakly nonlinear'' expansion of Eckhaus \cite{E}
consists in seeking for $\mu=\eps^2 \ll 1$, formal asymptotic solutions of form
\be\label{form}
U^{\e}(x,t)=\frac{1}{2}\e A(\Hx,\Ht)e^{i\xi}r + \cO(\e^2) +c.c.,
\quad \xi=k_* \Big(x + \frac{\Im \tl(k_*,0)}{k_*}t \Big)
\ee
of the full nonlinear equation \eqref{std}, based on modulations with vaying amplitude $A$ of
the neutral linear solution \eqref{exact} at $\mu=0$, with $(\Hx,\Ht)$ an appropriate
rescaled moving coordinate frame.
As described, e.g., in \cite{M3}, the equations close under the choice of coordinates
\be\label{scale}
\Hx=\e(x+ \Im\d_k\tl(k_*,0)t), \quad  \Ht=\e^2t,
\ee
yielding an {\it amplitude equation} consisting of the complex Ginzburg-Landau equation (cGL):
\begin{equation}\label{eq:cGL}
	A_{\Ht}=-\frac{1}{2}\d_k^2\tl(k_*,0)A_{\Hx\Hx}+\d_\mu\tl(k_*,0)A+\gamma|A|^2A,
\end{equation}
where the Landau constant $\gamma\in \CC$ is determined by the form of the nonlinearity $\cN$
together with linear information about the spectral structure of $S(k_*,0)$, see \eqref{eq:generalgamma} in Lemma \ref{lem:GeneralGamma} and \cite[\S 2.2]{M3} for the formula for $\gamma$.
The different speeds $-\frac{\Im \tl(k_*,0)}{k_*}$ vs. $-\Im\d_k\tl(k_*,0)$ in
the moving frames $\xi$ and $\Hx$ correspond to {\it phase} vs. {\it group} velocities of
the underlying linear exponential solutions $e^{ik_* x+ \tl(k_*,\mu)t}$.

In the $O(2)$ symmetric reaction-diffusion case, invariant under reflection $x\to -x$ as well as translation-- 
more generally, when both $L$ and $\cN$ depend only on even order derivatives of $w$,
$\tl(k_*,0)=0$ and \eqref{eq:cGL} reduces to the real Ginzburg-Landau equation (rGL):
\be\label{eq:rGL}
A_{\Ht}= c_1 A_{\Hx \Hx} + c_2 A + \gamma |A|^2 A; \qquad c_j, \gamma \in \RR
\ee
and $\xi$, $\Hx$ to the stationary frames $k_*x$, $\eps x$.
The first may be seen by the fact that $S(k_*,0)$ is real, so that the assumption of a single
imaginary eigenvalue $\tl(k_*,0)$ implies $\tl(k_*,0)=0$;\footnote{
For $\tl(k_*,0)\neq 0$, there is a higher codimension bifurcation 
involving counterpropagating waves \cite{CK,PYZ,AK}.}
the second by the fact that reflection invariance is inherited in \eqref{eq:cGL} as
invariance under complex conjugation, and $\xi$, $\Hx$ to the stationary frames $k_*x$, $\eps x$.
See, e.g., \cite{M1,M2,SZJV}, for further discussion.

\bigskip
As described in surveys \cite{AK,vSH,M3}, the complex Ginsburg-Landau equation \eqref{eq:cGL} supports
a rich variety of coherent structures, including front, pulse, and periodic,
as well as source/sink type solutions \cite{SSc,DSSS,BNSZ}.
For our purposes, the relevant ones ones are periodic solutions 
\be\label{GLper}
A=e^{i(\kappa \Hx+ \omega \Ht)} \alpha,  \quad \alpha\equiv \const,
\ee
corresponding 
through \eqref{form}--\eqref{eq:cGL} to
approximate time- and spatially-periodic traveling waves
$$
\begin{aligned}
	 U^{\e}(x,t)&=\frac{1}{2}\e \alpha  e^{i(kx + \Omega t)}r + \cO(\e^2) +c.c.\\
\end{aligned}
$$
with spatial and temporal wave numbers $k=k_*+\eps \kappa$ and
\be\label{st}
\Omega= \Im \tl(k_*,0) + \eps\kappa \partial_k\tl(k_*,0) + \eps^2 \omega . 
\ee

These may be seen to be stationary in the original rest frame $\xi$ for all $\eps$ if and only if 
$\omega=0$ and $\frac{\Im \tl(k_*,0)}{k_*}=\partial_k \Im \tl(k_*,0)$:
that is, the Ginzburg-Landau solution \eqref{GLper} is stationary {\it and} group and phase velocities coincide.
Otherwise, they are not all stationary in {\it any} one frame as $\eps$ is varied.

Plugging \eqref{GLper} into \eqref{eq:cGL} for $\alpha\neq 0$ gives the {\it nonlinear dispersion relation}
\be\label{nld}
i\omega= -\frac12 \d_k^2\tl(k_*,0) \kappa^2 +\d_\mu\tl(k_*,0)+\gamma|\alpha|^2,
\ee
characterizing $\alpha$ and $\omega$ as functions  of $\kappa$
\ba\label{alphaomega}
|\alpha|&= \sqrt{ \Re \gamma^{-1}\Big( \frac12 \d_k^2\Re \tl(k_*,0) \kappa^2 -\Re \d_\mu\tl(k_*,0) \Big)},\\
\omega&= -\frac12 \Im \d_k^2\tl(k_*,0) \kappa^2 +\Im \d_\mu\tl(k_*,0)+\Im \gamma|\alpha|^2,
\ea
from which we see that solutions exist under the {\it supercriticality condition}
$\Re \gamma \Re \d_\mu\tl(k_*,0)<0$, within range
\be\label{krange}
\kappa^2 < 2 \Re \d_\mu\tl(k_*,0)/ \d_k^2\Re \tl(k_*,0),
\ee
and are stationary if and only if $\tl(k_*,0)$, $\d_\mu\tl(k_*,0)$, and $\gamma$ are common complex multiples of reals.

\subsection{Main results}\label{s:main} 
With these preparations, our main results are as follows.

\begin{theorem}[Expansion \cite{SS,NW,M3}]\label{maincGL}
Under Turing Hypotheses \ref{hyp:Turing},
for quasilinear nonlinearity $\cN$ and $\mu=\eps^2$, for any smooth solution of \eqref{eq:cGL}
that is uniformly bounded in $C^s$, $s$ sufficiently large, for $0\leq \Ht\leq T$, 
or equivalently $0\leq t\leq T/\eps^2$, expansion \eqref{form}-\eqref{scale}, augmented by
an appropriately chosen smooth $\eps^2$ order corrector, is uniformly 
valid to order $\eps^3$, i.e., its truncation error as an approximate solution of \eqref{std}
is bounded by a constant multiple of $\eps^4$, for $0\leq \hat t \leq T$.
\end{theorem}

\begin{theorem}[Existence]\label{mainLS}
Under Turing Hypotheses \ref{hyp:Turing},
for quasilinear nonlinearity $\cN$ and $\mu=\eps^2$, for any $\nu_0>0$ there exists 
$\eps_0$ such that for $\eps\in[0,\eps_0)$ 
and $\kappa^2 \leq (1-\nu_0) 2 \Re \d_\mu\tl(k_*,0)/ \d_k^2\Re \tl(k_*,0)$ there exists
	a unique (up to translation, i.e., up to choice of $\alpha$) 
	small spatially periodic traveling-wave solution $\bar U^\eps (kx+\bar \Omega t)\not \equiv 0$ 
of \eqref{std}, $\bar U$ $2\pi$-periodic, with $k=k_*+ \eps \kappa$, satisfying
\ba\label{soln}
	\bar U^{\e}(z)&= \Big(\frac{1}{2}\e \alpha  e^{iz}r + c.c.\Big) +\cO(\e^2),\\
	\bar \Omega &= \Big( \Im \tl(k_*,0) + \eps\kappa \partial_k\tl(k_*,0) + \eps^2 \omega \Big) + O(\eps^3),
\ea
where $\alpha\in \CC$ and $\omega\in \RR$ satisfy \eqref{alphaomega},
while for $\eps\in[0,\eps_0)$ and
$\kappa^2 \geq (1+\nu_0) 2 \Re \d_\mu\tl(k_*,0)/ \d_k^2\Re \tl(k_*,0)$ there exist no such small nontrivial solutions.
In the ($O(2)$-symmetric) generalized reaction diffusion case that $L$ and $N$ depend only 
on even derivatives or even powers of odd derivatives of $u$, $\bar \Omega\equiv 0$ and $\bar U^\eps$ is even for $\alpha\in \RR$.
\end{theorem}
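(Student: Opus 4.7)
The plan is to apply Lyapunov--Schmidt reduction to the traveling-wave equation in the moving frame $z=kx+\Omega t$. Substituting $u(x,t)=U(z)$ into \eqref{std} with $k=k_*+\eps\kappa$ converts the PDE to the stationary functional equation
\[
F(U,\Omega,\kappa,\eps) := -\Omega U'+L(\eps^2)\big|_{\d_x\to k\d_z}U+\cN(U,\eps^2)\big|_{\d_x\to k\d_z}=0
\]
for $2\pi$-periodic $U\in H^s_{\mathrm{per}}$, with $s$ chosen large enough for the quasilinear nonlinearity to act by composition. At the reference point $(U,\eps,\kappa,\Omega)=(0,0,0,\Omega_0)$ with $\Omega_0:=\Im\tl(k_*,0)$, the linearization $D_UF$ is a constant-coefficient Fourier multiplier whose symbol on the $n$-th mode is $-in\Omega_0 I+S(nk_*,0)$. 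By Hypothesis \ref{hyp:Turing} this is invertible for every $n\neq\pm 1$, while at $n=\pm 1$ it has a one-dimensional kernel spanned by $r$ (respectively $\bar r$). Hence $\ker D_UF=\mathrm{span}_{\CC}\{e^{iz}r\}+c.c.$, the cokernel is spanned by the corresponding left eigenmodes $e^{\pm iz}\ell^{\pm}$ normalized so $\ell\cdot r=1$, and the restriction of $D_UF$ to the range complement is boundedly invertible.

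I then split $U=\tfrac{\eps}{2}\alpha e^{iz}r+c.c.+\eps^2 V$ with $V$ in the range complement, and project $F=0$ via the spectral projection into (i) a range equation for $V$ and (ii) a scalar complex bifurcation equation for $(\alpha,\Omega)$. The range equation is solved by the implicit function theorem: the restricted linear operator is invertible at $\eps=0$, while the $\eps$-dependent corrections from $L(\eps^2)-L(0)$, the shift $k-k_*=\eps\kappa$, $\Omega-\Omega_0=O(\eps)$, and the rescaled quasilinear remainder $\eps^{-2}\cN(\eps(\cdot),\eps^2)$ (of order $\eps$ since $\cN$ is quadratic in $U$ and $U=O(\eps)$) are small perturbations. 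This yields $V=V(\alpha,\kappa,\omega,\eps)$ smooth in its arguments with $\|V\|_{H^s}$ uniformly bounded.

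Substituting $V$ back into the bifurcation equation and dividing through by $\eps^3$, the leading-order balance reproduces exactly the nonlinear dispersion relation of Section \ref{s:GLapprox}:
\[
i\omega= -\tfrac12\d_k^2\tl(k_*,0)\kappa^2+\d_\mu\tl(k_*,0)+\gamma|\alpha|^2+O(\eps),
\]
where $\omega:=(\Omega-\Omega_0-\eps\kappa\,\d_k\tl(k_*,0))/\eps^2$ and the Landau coefficient $\gamma$ is determined by pairing the resonant cubic contributions (including quadratic-times-quadratic terms funnelled through $V$) against $\ell$. Splitting real and imaginary parts gives a $2\times 2$ real system for $(|\alpha|^2,\omega)$, after fixing the phase of $\alpha$ by the $S^1$ translational gauge $z\mapsto z+\theta$. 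Under the supercriticality $\Re\gamma\,\Re\d_\mu\tl(k_*,0)<0$, the Jacobian is uniformly nondegenerate on $\kappa^2\leq(1-\nu_0)\,2\Re\d_\mu\tl(k_*,0)/\d_k^2\Re\tl(k_*,0)$, so a further application of the implicit function theorem delivers the unique small solution \eqref{soln} satisfying \eqref{alphaomega} with $O(\eps^3)$ error. For $\kappa^2\geq(1+\nu_0)\,2\Re\d_\mu\tl(k_*,0)/\d_k^2\Re\tl(k_*,0)$, the leading equation forces $|\alpha|^2<0$, and a contraction argument rules out small nontrivial solutions of the full reduced equation.

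The principal obstacle I anticipate is keeping all estimates \emph{uniform in $\eps$} in the quasilinear (and possibly semilinear hyperbolic) setting: derivative losses in $\cN$ must not upset the boundedness of $(D_UF)^{-1}$ on the range complement, so one must work in a scale of $H^s_{\mathrm{per}}$ with $s$ large, using that the quasilinear corrections are small in operator norm once $U=O(\eps)$, and tame-estimate bookkeeping so that the Landau coefficient $\gamma$ is computed with coefficients stable as $\eps\to 0$. For the $O(2)$-symmetric reaction-diffusion case, the $\ZZ_2$ symmetry $z\mapsto -z$ commutes with $F$, and since $\tl(k_*,0)=0$ forces $\Omega_0=0$, combining this with uniqueness up to translation yields $\bar\Omega\equiv 0$ and $\bar U^\eps$ even in $z$ after choosing the gauge $\alpha\in\RR$.
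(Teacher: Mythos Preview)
Your proposal is correct and follows essentially the same Lyapunov--Schmidt route as the paper: set up the traveling-wave equation on $H^s_{\mathrm{per}}$, identify the two-dimensional kernel at $(\pm 1)$-modes, solve the range equation by the implicit function theorem (the paper's Proposition~\ref{prop:LinBdd} supplies the bounded invertibility you invoke), and reduce to a complex scalar equation whose real and imaginary parts are solved for $(|\alpha|^2,\omega)$ and the speed correction respectively.

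One point the paper treats more carefully than you do: your sentence ``the leading-order balance reproduces exactly the nonlinear dispersion relation of Section~\ref{s:GLapprox}'' asserts that the Landau constant arising from the Lyapunov--Schmidt reduction coincides with the $\gamma$ appearing in \eqref{eq:cGL}. The paper does not take this for granted; it labels the LS constant $\gamma_{LS}$ and devotes a separate subsection (\S\ref{subsec:TwoGammas}) to the explicit verification $\gamma_{LS}=\gamma_{\CC GL}$ by computing the order-$\eps^2$ Fourier modes $\widehat{\tilde u}(0)$, $\widehat{\tilde u}(2)$ of the exact solution and checking they match $\Psi_0,\Psi_2$ from the multiscale expansion. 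Since the theorem statement ties $\alpha,\omega$ to \eqref{alphaomega}, which is written in terms of the cGL $\gamma$, you should either carry out this identification or at least flag it as a required computation. A secondary cosmetic difference: the paper splits the complement into two pieces ($Q$ at frequency $\pm1$ orthogonal to $r$, and $R$ at all other frequencies) to make the connection with the multiscale $\Psi_1$ transparent, but explicitly remarks this split is inessential, so your two-way decomposition is fine.
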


These results are of two rather different types, the first concerning formal accuracy, or truncation error,
of the complex Ginzburg-Landau approximation for general solutions of (cGL) on a finite time-interval, 
and the second existence of and rigorous convergence error from nearby exact solutions of \eqref{std} 
for the special case of space-time periodic solutions \eqref{GLper}-\eqref{st} of (cGL).
In both cases, we show that our results remain valid under reasonable assumptions, also in the {\it nonlocal}
case that $L(k)$ a general Fourier multiplier; see Section \ref{sec:MGN}.
This may be useful in applications such as chemotaxis, water waves, etc.;
see, for example, \cite{BBTW,L} and references therein.

Theorem \ref{maincGL} was established by classical matched asymptotic analysis
in \cite{SS} and \cite{NW} for plane Poiseuille flow and 
Rayleigh-Benard convection; the general case is treated in \cite[\S 2.2]{M3}.
For completeness, and as preparation for the analysis in Theorem \ref{mainLS} and
companion paper \cite{WZ1}, we reprove the theorem here step by step, in full detail.
The treatment of nonlocal equations in Section \ref{sec:MGN} and expansion to all orders in Section \ref{s:allorders} 
may likewise be of interest; see also the discussion of nonresonant semilinear hyperbolic problems in 
Remark \ref{wrinklermk}.
Theorem \ref{maincGL} is established in Theorem \ref{3.1} in the simple case of a nonlinearity
that is a function of $u$ alone, and extended to all orders
and general quasilinear nonlinearities in Theorem \ref{allordersthm}.

Theorem \ref{mainLS} so far as we know is new in the general $SO(2)$ (translation- but not reflection-invariant) 
case- at least in its full details- and certainly in its method of proof.
The latter, similarly as in \cite{M}, uses Lyapunov-Schmidt reduction
to a codimension two $SO(2)$ bifurcation in two dimensions
parametrized by $(\mu, \delta)$, where $\delta$ is a free parameter
allowing for variation in speed, tracing through this process and the matched asymptotic steps
of Theorem \ref{maincGL} to verify that the resulting reduced system matches to lowest order
the rotating-wave system \eqref{nld} for the complex Ginzburg-Landau equation \eqref{eq:cGL}.
It is established for nonlinearies that are functions of $u$ alone in 
Theorem \ref{thm:LSReduction} and Corollary \ref{mainprop}, and extended to 
general quasilinear nonlinearities in Section \ref{s:LS2}. 
It is extended to nonlocal nonlinearities in Remark \ref{genrmk}.
We note the interesting subtlety that, without $O(2)$ symmetry, 
{one cannot conclude existence of stationary solutions},
even in the case that the approximating (cGL) solution is stationary,
but only traveling waves with slow, $O(\eps^3)$ speed.

\br\label{existrmk}
An alternative approach to the proof of Theorem \ref{mainLS}, as described in
\cite[Case 2, \S 6.1]{M3} is Kirchg\"assner reduction, or ``spatial dynamics'' \cite{Ki}, in 
which one seeks time-periodic solutions by reduction to a center manifold ODE in
$x$ within the space of time-periodic functions.
However, this approach, though elegant, does not seem to yield stability information.
Ultimately, both approaches rely on reduction to a two-dimensional ODE with $SO(2)$ invariance inherited
from translational invariance in the original problem: temporal in the spatial dynamics setting 
and spatial in our setting of classical Lyapunov-Schmidt reduction, with the 
main technical tasks being, first, to confirm that the resulting reduced systems match to up to
a small error the analogous rotating-wave system for \eqref{eq:cGL} and, second, to show by unfolding of the 
bifurcation that this small error in the models indeed translates to a small error in the solutions.
\er

\subsection{Discussion and open problems}\label{s:disc}
Theorem \ref{mainLS} yields rigorous global-in-time accuracy of special solutions of \eqref{eq:cGL} as
approximate solutions of \eqref{std}.
The complementary question of validity for bounded time of general solutions of \eqref{eq:cGL}
has been studied for real and complex Ginzburg-Landau in, e.g., \cite{CE,S3,KSM} and \cite{vH,M3}, 
for various classes of initial data on \eqref{eq:cGL},\footnote{
	See also \cite{KT} for justification of (rGL) in the nonlocal case, for a model Swift-Hohenberg type equation. We note that the analysis \cite{KSM} in the case of cubic order nonlinearity does not require smoothing, applying also in the hyperbolic case. The analysis of \cite{vH} is restricted to the case of
(exactly) quadratic nonlinearity.}
with the typical result that there exists an
exact solution of \eqref{std} remaining $\cO(\eps^2)$ close to the corresponding
$\cO(\eps)$ term in \eqref{form} on a bounded time interval $\Ht\in [0,T]$, or, in original coordinates
$ t\in [0, T/\eps^2]$.

We note for the special solutions of Theorem \ref{mainLS} the convergence error $|U^\eps -\bar U^\eps|$
is $\cO(\eps^2)$ for 
$$
|\partial_x \bar U| |\Omega -\bar \Omega|t\lesssim \eps^2,
$$
or, using $\partial_x \bar U\sim \eps$, $|\Omega -\bar \Omega|t\lesssim \eps$, 
for $t\lesssim \eps^{-2}$, in agreement with the general result of \cite{vH}.

The variation $\bar \Omega\neq \Omega$ in speed between exact and approximate solutions is the main technical
difference between the $SO(2)$ invariant case treated here and the $O(2)$ invariant case treated in previous
works. A related result in the fixed-period case $\kappa\equiv 0$, 
is the treatment of transverse $SO(2)$ Hopf bifurcation in \cite{M,BMZ} of magnetohydrodynamic shock waves in a channel.
This difference may be understood (cf. \cite{M})
by comparing $O(2)$ vs. $SO(2)$-invariant ODE in the plane, or, writing
in complex form: $\dot A= f(|A|)A$ with $f$ real- vs. complex-valued.
In the first place, one may seek steady solutions $A\equiv \alpha$ with $\alpha \equiv \const$ 
by solving the scalar equation $f(|A|)=0$;
in the second, one seeks rotating solutions $A=e^{\omega t}\alpha$ by solving
the scalar equations $\Re f=0$ and $\Im f=\omega$, resulting in general in nonstationary solutions $\omega\neq 0$.
The speed $\omega$ serves as an additional bifurcation parameter along with $\mu$ in the $SO(2)$ case, 
making this a codimension-two bifurcation as compared to the codimension-one bifurcation of the $O(2)$ case.
This type of computation may be found, repeated, throughout our analysis
of both formal expansion and Lyapunov-Schmidt reduction, in solving the $2$-dimensional $SO(2)$-invariant
equations to which both ultimately reduce.

\bigskip

As regards further directions for study, we mention, first, the physically important
question of {\it time-asymptotic stability.}
Stability of periodic solutions \eqref{GLper} as solutions of (cGL) can be explicitly determined \cite{AK,TB},
leading to the formal ``Eckhaus criterion'' for 1-D stability of exact solutions 
\eqref{soln} as solutions of \eqref{std}.
Indeed, this could be partly validated in principle using the existence theory developed here
via the Whitham modulation criterion for the bifurcating waves \cite{W,JNRZ,SSSU}, a 
low-frequency {\it necessary condition}
for stability depending only on existence theory and spectral information of the neutral, ``translational''
eigenmodes $\partial_x \bar U^\eps$.
We shall not pursue that, but instead carry out a full (necessary and sufficient) stability 
analyis in \cite{WZ1} generalizing to the complex Ginzburg-Landau case the results of
\cite{S1,SZJV} for the real Ginzburg-Landau case.

Another interesting direction for further exploration would be rigorous validation, either for exact periodic
solutions, or general solutions of (cGL) on time interval $[0, T/\eps^2]$,
of higher-order expansions of \eqref{soln} as constructed in Section \ref{s:allorders}.

For the applications to vasculogenesis models that we have in mind, it is important also to extend
to the case that $r$ in \eqref{eq1} have incomplete rank, in particular that 
$$
w=\begin{pmatrix}w_1 \\  w_2 \end{pmatrix}, \quad 
r=\begin{pmatrix}0 \\ r_2 \end{pmatrix}, \quad 
B=\begin{pmatrix}B_{11} & B_{12} \\ B_{21}&B_{22} \end{pmatrix},
$$
with $B_{11}$ full rank.
Interestingly, the existence problem for this case may be treated by the theory already developed here.
For, integrating the $w_1$ equation gives a family of conservation laws
$f_1(w)-B_{11} \partial_x w_1 - B_{12}\partial_x w_2\equiv  p$, for $p$ a vector of parameters of dimension
$\dim w_1$.
Solving these relations using Fourier inversion combined with the implicit function theorem,
we may obtain $w_1$ as a nonlocal function of $w_2$, yielding a family of nonlocal problems of the type
treated in Section \ref{sec:MGN} smoothly parametrized by $p$, and satisfying Turing Hypotheses \eqref{hyp:Turing}.
Applying the theory already developed, we find that small periodic traveling waves are given by
$p\equiv \const$ and $w_2$ a solution of form \eqref{soln}.

This generalizes existence results obtained in \cite{MC,S} in the case of a single conservation law for a
model $O(2)$-invariant Swift-Hohenberg type equation.
Continuing this analogy, we derive in \cite{WZ2,WZ3} also a description of behavior/stability analogous to that 
of \cite{MC,S} in terms of amplitude equations coupling (cGL) and conserved quantities,
generalizing results of \cite{HSZ} for B\'enard-Marangoni and thin-film flow.
As noted in \cite{HSZ}, under the influence of convection, these amplitude equations in general become
{\it singular}, exhibiting $\eps^{-1}$ order convective mixing in ``mean modes'' associated with conservation laws,
a circumstance that greatly complicates the analyis of stability and behavior.
It is this novel aspect, and the associated lack of Eckhaus-type stability analysis, we believe,
that has up to now prevented the application of weakly unstable approximation techniques
to the problem of initiation in vasculogenesis.
An important further extension would be to treat the case of incomplete parabolicity $\det B=0$ occurring for
actual physical models.

Finally, and more speculatively,
an important challenge is to go beyond the initialization phase to describe
longer-term/larger scale development of vascular structure: that is, the slower time-scale ``emergent structure''
not directly programmed by the model/cell genetics.
There are many possible dynamical systems mechanisms by which such multiscale dynamics can occur;
see, e.g., \cite{CMM,BW1,BW2}.
As a first step, we have in mind to apply modulation techniques like those developed in \cite{W,DSSS,JNRZ,SSSU,MZ}
for the description of behavior of ``fully-developed'' large-amplitude patterns.
As a model in one dimension, see for example the ``coarse-grained'' description of 
behavior of periodic Kuramoto--Sivashinsky cells
in \cite{FST}, and its wide generalizations in \cite{JNRZ}.


\section{Preliminaries}\label{sec:Prelim}
To begin, we consider the following system in a neighborhood of a Turing bifurcation:
\begin{equation}\label{eq:MasterEqn}
	u_t=L(\mu)u+\cN(u),
\end{equation}
where $\cN:\RR^n\rightarrow\RR^n$ is a smooth nonlinear function of quadratic order in $u$,
and $L(\mu)$ is a constant-coefficient linear operator
\begin{equation}\label{eq:LinOpDef}
	L(\mu)=\sum_{j=0}^m \cL_j(\mu)\d_x^j,
\end{equation}
where $\cL_j(\mu)$ is a $C^{\infty}$ function of $\mu$ with values in $M_n(\RR)$, the set of $n\times n$ real matrices.
We define the associated Fourier symbol
\be\label{sym}
	S(k,\mu)=\sum_{j=0}^{m} \cL_j(\mu)(ik)^j.
\ee

\begin{remark}
	A typical source of these types of systems are reaction-diffusion or reaction-diffusion-convection systems. It is a straightforward generalization to allow $\cN$ to depend on $\mu$, the only change being some added bookkeeping.
	We shall see later that $\mu$-dependence in $\cN$ enters the analysis at higher order, affecting neither amplitude equations nor nonlinear existence.
\end{remark}

To find periodic solutions to \eqref{eq:MasterEqn}, we rescale $\xi=kx$ and define the modified linear operator
\begin{equation}\label{eq:KLinOpDef}
	L(k,\mu)=\sum_{j=0}^{m}k^j\cL_j(\mu)\d_\xi^j
\end{equation}
The advantage of this change of coordinates is that all periodic solutions are now supported on the same fixed integer lattice in Fourier space.

The following conditions codify our notion of generalized Turing bifurcation.

\begin{hypothesis}\label{hyp:Turing}
The symbol $S(k,\mu)$ and its eigenvalues $\{\tl(k,\mu),\tl_2(k,\mu),...,\tl_n(k,\mu) \}$ satisfy:\\

\noindent
	(H1) For $\mu<0$ and all $k\in\RR$, $\sigma(S(k,\mu))\subset\{z\in\CC:\Re z<0 \}$.\\
	(H2) For $\mu=0$ there is a unique $k_*>0$ such that $\Re\tl(k_*,0)=0$ and for $2\leq j\leq n$ $\Re\tl_j(k_*,0)<0$.\\
	(H3) For $\mu=0$ and all $k\not=\pm k_*$, we have that $\Re\tl(k,0)<0$ and for $2\leq j\leq n$ $\Re\tl_j(k,0)<0$.\\
	(H4) $\Re\d_\mu\tl(k_*,0)>0$, $\Re\d_k\tl(k_*,0)=0$ and $\Re\d_k^2\tl(k_*,0)<0$.
\end{hypothesis}

	For general results that hold for $|k-k_*|\gg1$, we will for simplicity denote $\tl(k,\mu)=\tl_1(k,\mu)$,
	as in this regime $\tl(k,\mu)$ behaves similarly as other $\tl_j(k,\e)$.

There are some simple conditions to on the symbol of a differential operator that ensure (H3), at least for $|k|\gg1$ and for $|k|\ll1$, which we describe in the following proposition.
\begin{proposition}\label{prop:NecSuf}
	We have the following criteria for satisfaction of (H1)--(H3).
	\begin{enumerate}
		\item For $|k|\ll 1$, (H1) and (H3) are equivalent to $\sigma(\cL_0(\mu))\subset\{z\in\CC:\Re z<0\}$.
		\item If $m$ is even, then $\sigma\left((-1)^{\frac{m}{2}}\cL_m(\mu)\right)\subset\{z:\in\CC:\Re z<0 \}$ is sufficient for (H1) and (H3) in the regime $|k|\gg1$.
		\item If $m$ is odd, then necessarily $\sigma(\cL_m(\mu))\subset\RR$; moreover if we in addition assume that $\cL_m(\mu)$ is diagonalizable, and $\ell^{\infty}_j$ and $r^{\infty}_j$ denote the left and right eigenvectors of $\cL_m(\mu)$, then $\ell^\infty_j(-1)^{\frac{m-1}{2}}\cL_{m-1}(\mu)r^\infty_j<0$ implies (H1) and (H3) in the regime $|k|\gg1$.
	\end{enumerate}
\end{proposition}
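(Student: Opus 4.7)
The plan is to note that $S(k,\mu)$ is a polynomial in $k$ with matrix coefficients, and to analyze its spectrum in the two asymptotic regimes $|k|\ll1$ and $|k|\gg1$ by standard continuous/analytic matrix perturbation theory, identifying in each case a leading ``unperturbed'' symbol whose stability controls that of $S(k,\mu)$. In all cases (H2) rules out neutral modes at $k=0$ or $|k|=\infty$ since $0<k_*<\infty$, so it suffices to establish \emph{strict} stability of the leading symbol.

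For Part (1), at $k=0$ one has $S(0,\mu)=\cL_0(\mu)$, so by continuity of the spectrum in the matrix entries, strict stability of $\cL_0(\mu)$ transfers to strict stability of $S(k,\mu)$ for $|k|$ small; the converse follows by sending $k\to 0$ in (H1) or (H3). For Part (2), with $m$ even, factor
$$S(k,\mu)=k^m\bigl[(-1)^{m/2}\cL_m(\mu)+O(k^{-1})\bigr],$$
and continuity of spectrum of the bracketed matrix, together with $k^m>0$ for real $k$, shows the eigenvalues of $S(k,\mu)$ share the sign of those of $(-1)^{m/2}\cL_m(\mu)$ for $|k|$ large.

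For Part (3), the analogous factorization in the $m$ odd case isolates the leading term $(-1)^{(m-1)/2}ik^m\cL_m(\mu)$, which is purely imaginary times a real matrix. Since a real matrix has any non-real eigenvalues in complex conjugate pairs, a non-real $\lambda\in\sigma(\cL_m(\mu))$ would give $i\lambda,i\bar\lambda\in\sigma(i\cL_m)$ with real parts $\pm\Im\lambda$ of opposite sign, violating (H1) at large real $k$; this forces $\sigma(\cL_m(\mu))\subset\RR$, the asserted necessary condition. Under the stated diagonalizability, applying first-order eigenvalue perturbation theory to the matrix $S(k,\mu)/\bigl((-1)^{(m-1)/2}k^m\bigr)$ with small parameter $1/k$ gives
$$\tl_j(k,\mu)=(-1)^{(m-1)/2}ik^m\lambda_j^\infty+(-1)^{(m-1)/2}k^{m-1}\ell_j^\infty\cL_{m-1}(\mu)r_j^\infty+O(k^{m-2}).$$
Taking real parts (the first term is purely imaginary because $\lambda_j^\infty\in\RR$) and using $k^{m-1}>0$ since $m-1$ is even, the hypothesis $\ell_j^\infty(-1)^{(m-1)/2}\cL_{m-1}(\mu)r_j^\infty<0$ delivers $\Re\tl_j(k,\mu)<0$ for $|k|$ large.

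The main technical subtlety lies in Part (3): one must justify the first-order perturbation formula when the leading imaginary eigenvalues $(-1)^{(m-1)/2}ik^m\lambda_j^\infty$ may coincide if $\cL_m(\mu)$ has repeated (real) eigenvalues. Diagonalizability resolves this, because on each invariant eigenspace the leading symbol acts as a scalar and the first-order correction is automatically diagonalized by $\ell_j^\infty\cL_{m-1}r_j^\infty$; equivalently, the gap between distinct leading imaginary eigenvalues grows like $k^m$, dominating the $k^{m-1}$-scale correction. Apart from this, the argument is bookkeeping in the parity of $m$ (controlling whether $(ik)^m$ is real or imaginary) combined with standard matrix perturbation theory.
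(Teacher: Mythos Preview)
Your proof is correct and follows essentially the same route as the paper: both arguments factor $S(k,\mu)=k^m\tilde S(1/k,\mu)$ for large $|k|$ and invoke continuity of spectrum at $k=0$ and $1/k=0$, then use first-order eigenvalue perturbation for Part~(3). The one minor difference is in the necessity argument for $\sigma(\cL_m)\subset\RR$: the paper observes that $k^m$ changes sign for odd $m$, so a nonzero $\Im\lambda_j^\infty$ would give $\Re\tl_j$ of both signs as $k\to\pm\infty$, whereas you instead use that non-real eigenvalues of the real matrix $\cL_m$ occur in conjugate pairs, giving leading real parts $\pm\Im\lambda$ already at a single value of $k$. Both arguments are equally short and valid. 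Your closing remark on repeated eigenvalues is a point the paper does not address; note, though, that when an eigenvalue of $\cL_m$ is repeated the first-order corrections are the eigenvalues of the restriction of $(-1)^{(m-1)/2}\cL_{m-1}$ to the eigenspace rather than the diagonal entries $\ell_j^\infty\cL_{m-1}r_j^\infty$, so the statement and both proofs are really intended for the generic simple-eigenvalue case.
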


\begin{remark}
	Note that the second condition is essentially equivalent to saying that $L(\mu)$ is an elliptic operator when $m$ is even. The first assertion disallows conserved quantities, c.f. \cite{MC,S}.
\end{remark}

\begin{proof}
	For (1), this follows immediately from the fact that $\{\tl(k,\mu),\tl_2(k,\mu),...,\tl_n(k,\mu) \}\rightarrow \sigma(\cL_0(\mu))$ as $k\rightarrow 0$ by the continuity of the spectrum.

In order to attack (2) and (3), we rescale $S(k,\mu)$ as
	\begin{equation}\label{eq:TildeS}
		S(k,\mu)=k^m\left(i^m\cL_m(\mu)+\frac{i^{m-1}}{k}\cL_{m-1}(\mu)+...+\frac{1}{k^m}\cL_0(\mu) \right)=k^n\tilde{S}(\frac{1}{k},\mu)
	\end{equation}
	Now, for $\eta:=\frac{1}{k}$ we have $\sigma(\tilde{S}(\eta,\mu))\rightarrow\sigma(i^m\cL_m(\mu))$ as $\eta\rightarrow0$. First assume that $m$ is even, then we have $i^m=(-1)^{\frac{m}{2}}$ and so the claim in (2) follows by continuity of the eigenvalues and the observation that $k^m\geq0$ for all $k\in\RR$.

	To complete the argument, we now assume that $m$ is odd. Let $\tl^\infty_j(\mu)=\a_j+i\b_j$, $j=1,...,n$, be the eigenvalues of $\cL_m(\mu)$ and $\ell^\infty_j$ and $r^\infty_j$ be the associated left and right eigenvectors. By the limiting argument for $\tilde{S}(\eta,\mu)$, we have the asymptotic expansion $\tilde{\s}_j(k,\mu)=(ik)^m\tl_j^\infty(\mu)+o(k^m)$, which allows us to compute the real parts as
	\begin{equation}
		\Re\tilde{\s}_j(k,\mu)=(-1)^{\frac{m+1}{2}}k^m\b_j+o(k^m)
	\end{equation}
	Since $k^m$ changes sign we necessarily have to have $\b_j=0$, that is $\tl_j^\infty(\mu)$ are real numbers. For the second assertion, we Taylor expand the eigenvalues of $\tilde{S}(\eta,\mu)$ as
	\begin{equation}
		\tilde{\s}_j(\eta,\mu)=i^m\tl_j^\infty(\mu)+\d_\eta\tl_j(0,\mu)\eta+\cO(\eta^2)
	\end{equation}
	Passing back to $S(k,\mu)$, we find that
	\begin{equation}
		\Re \tl_j(k,\mu)=\Re\d_\eta\tl_j(0,\mu)k^{m-1}+\cO(k^{m-2})
	\end{equation}
	But we may compute $\d_\eta\tl_j(0,\e)=\ell_j^\infty \d_\eta\tilde{S}(0,\mu)r^\infty_j=(-1)^{\frac{m-1}{2}}\ell_j^\infty\cL_{m-1}(\mu)r^\infty_j$.
\end{proof}
In the Turing hypotheses, we make conditions on $\d_k^2\tl(k_*,0)$; so we seek an effective way to compute this quantity. We accomplish this with the following lemma.

\begin{lemma}\label{lem:EigenDeriv}
	Let $M(x)=\sum_{j=0}^{m}x^jM_j$ be a matrix function where each $M_j\in M_n(\CC)$. Suppose that at $x=0$, there is exactly one eigenvalue equal to 0 and that it is simple. Let $\l(x)$ be that eigenvalue and define left and right eigenvectors $\ell(x)$, $r(x)$ satisfying the normalization condition 
$\ell'(x)r(x)=\ell(x)r'(x)=0$ for each $x$, along with the usual $\ell(x)r(x)\equiv 1$. 
Define a projection $\Pi:=r(0)\ell(0)$. 
Then we have the formula for $\l''(0)$:
	\begin{equation}
		\l''(0)=2\left(\ell(0)M_2r(0)-\ell(0)M_1(I_n-\Pi)N(I_n-\Pi)M_1r(0) \right),
	\end{equation}
	where $N=\left((I_n-\Pi)M_0(I_n-\Pi)\right)^{-1}$.
\end{lemma}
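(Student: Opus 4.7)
The plan is to prove the formula by a direct perturbation expansion of the eigenvalue equation $M(x)r(x)=\lambda(x)r(x)$ around $x=0$, using the simplicity of the zero eigenvalue of $M_0$ to invert $M_0$ on the complementary invariant subspace via $N$.

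First I would expand $M(x)=M_0+xM_1+x^2M_2+\cdots$, $r(x)=r(0)+xr'(0)+\tfrac{x^2}{2}r''(0)+\cdots$, and $\lambda(x)=\lambda'(0)x+\tfrac{1}{2}\lambda''(0)x^2+\cdots$ (noting $\lambda(0)=0$), and match powers of $x$. The $x^0$ equation is just $M_0 r(0)=0$. The $x^1$ equation
\begin{equation*}
M_0 r'(0) + M_1 r(0) = \lambda'(0)\, r(0)
\end{equation*}
yields $\lambda'(0)=\ell(0)M_1 r(0)$ upon applying $\ell(0)$ (using $\ell(0)M_0=0$ and the standard biorthogonal normalization $\ell(0)r(0)=1$ implicit in the definition of the projection $\Pi=r(0)\ell(0)$).

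Next I would solve the $x^1$ equation for $r'(0)$. The normalization $\ell(x)r'(x)=0$ forces $r'(0)\in\Range(I_n-\Pi)$, and the right-hand side $-(M_1-\lambda'(0)I_n)r(0)$ is likewise in $\Range(I_n-\Pi)$ because $\ell(0)$ annihilates it (by the formula for $\lambda'(0)$). Since $M_0$ restricted to $\Range(I_n-\Pi)$ is invertible with inverse $N$, this gives
\begin{equation*}
r'(0) = -(I_n-\Pi)N(I_n-\Pi)\bigl(M_1-\lambda'(0)I_n\bigr)r(0).
\end{equation*}

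Then I would extract the $x^2$ coefficient of the eigenvalue equation,
\begin{equation*}
\tfrac{1}{2}M_0 r''(0) + M_1 r'(0) + M_2 r(0) = \lambda'(0)\,r'(0) + \tfrac{1}{2}\lambda''(0)\,r(0),
\end{equation*}
and apply $\ell(0)$. The $M_0$ term dies, the $\ell(0)r'(0)$ term dies by normalization, and one obtains $\lambda''(0)=2\ell(0)M_2 r(0)+2\ell(0)M_1 r'(0)$. Substituting the expression for $r'(0)$ gives
\begin{equation*}
\lambda''(0) = 2\ell(0)M_2 r(0) - 2\ell(0)M_1(I_n-\Pi)N(I_n-\Pi)\bigl(M_1-\lambda'(0)I_n\bigr)r(0).
\end{equation*}
The key observation to finish is that $(I_n-\Pi)r(0)=r(0)-r(0)\ell(0)r(0)=0$, so the $\lambda'(0)I_n$ contribution drops out entirely, yielding exactly the stated formula.

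There is no real obstacle here beyond careful bookkeeping; the only subtlety worth flagging is the cancellation $(I_n-\Pi)r(0)=0$, which explains why the final answer involves $M_1$ rather than $M_1-\lambda'(0)I_n$, and why one may freely redefine $N$ as the inverse of $M_0$ on $\Range(I_n-\Pi)$ despite $M_0$ itself being singular.
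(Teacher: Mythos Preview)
Your proof is correct and follows essentially the same perturbation-theoretic strategy as the paper. The only organizational difference is that the paper computes \emph{both} $\ell'(0)$ and $r'(0)$ and uses the symmetric identity $\lambda''(0)=2\ell(0)M_2r(0)+\ell'(0)M_1r(0)+\ell(0)M_1r'(0)$ (obtained by differentiating $\lambda'(x)=\ell(x)M'(x)r(x)$), then invokes the commutation $N(I_n-\Pi)=(I_n-\Pi)N$ to symmetrize the result; you instead work only with $r'(0)$, obtain $\lambda''(0)=2\ell(0)M_2r(0)+2\ell(0)M_1r'(0)$ directly from the second-order equation, and kill the extraneous $\lambda'(0)$ term via $(I_n-\Pi)r(0)=0$. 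Your route is marginally more economical since it avoids computing $\ell'(0)$, while the paper's symmetric form makes the structure of the answer more transparent from the outset.
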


\begin{proof}
We begin by looking at $\ell'(x)$. From standard matrix perturbation theory \cite{K}, 
	we know that $\ell$ is a smooth function in a neighborhood of 0. By our normalization conditions, $\ell'(0)\in(I_n-\Pi)\CC^n$ and similarly $r'(0)\in(I_n-\Pi)\CC^n$. As before, we try to compute $\ell'(0)$ by differentiating the eigenvalue equation and setting $x=0$, obtaining
	\begin{equation}
		\ell'(0)M_0+\ell(0)M_1=\l'(0)\ell(0).
	\end{equation}
	Applying $(I_n-\Pi)$ on the left, we find that
	\begin{equation}
		\ell'(0)M_0(I_n-\Pi)+\ell(0)M_1(I_n-\Pi)=0.
	\end{equation}
	Since $M_0$ is invertible on the invariant subspace $(I_n-\Pi)$, we can solve for $\ell'(0)$ as 
	\begin{equation}\label{eq:EllPrime}
		\ell'(0)=-\ell(0)M_1(I_n-\Pi)\left((I_n-\Pi)M_0(I_n-\Pi) \right)^{-1}
	\end{equation}
	Analogously, we find that
	\begin{equation}\label{eq:RPrime}
		r'(0)=-\left((I_n-\Pi)M_0(I_n-\Pi) \right)^{-1}(I_n-\Pi)M_1 r(0).
	\end{equation}
	In order to simplify notation, we define $N:=\left((I_n-\Pi)M_0(I_n-\Pi) \right)^{-1}$.

	As before, we compute $\l''(0)=2\ell(0)M_2r(0)+\ell'(0)M_1r(0)+\ell(0)M_1r'(0)$ and plugging in \eqref{eq:EllPrime} and \eqref{eq:RPrime}, we discover
	\begin{equation}
		\l''(0)=2\ell(0)M_2r(0)-\left[\ell(0)M_1(I_n-\Pi)NM_1r(0)+\ell(0)M_1N(I_n-\Pi)M_1 r(0) \right].
	\end{equation}
	To make this expression more symmetric, observe that by the functional calculus, that $N(I_n-\Pi)=(I_n-\Pi)N$; so we get our final expression
	\begin{equation}
		\l''(0)=2\left(\ell(0)M_2r(0)-\ell(0)M_1(I_n-\Pi)N(I_n-\Pi)M_1r(0) \right).
	\end{equation}
\end{proof}
The final preliminary result we will need is 
equivalence between translation invariant multilinear forms and multilinear multipliers. 
We recall from \cite{Mu} the proof of this fact in the periodic case.
\begin{proposition}\label{thm:multilin}
	Let $M:\cP(\TT)^k\to \cM(\TT)$ be multilinear where $\cP(\TT)$ is the space of trigonometric polynomials and $\cM(\TT)$ is the space of Borel measurable functions on the torus $\TT=(0,2\pi]$, 
and suppose that $M$ is translation invariant in the sense that for all translations $\tau_hf(x)=f(x-h)$ we have
	\begin{equation}
		\tau_hM(p_1,...,p_k)=M(\tau_hp_1,...,\tau_hp_k).
	\end{equation}
	Then there exists $\s:\ZZ^k\to\CC$ such that,
	denoting $e(lx)=e^{2\pi ilx}$,
	\begin{equation}
		M(e(l_1x),...,e(l_kx))=\s(l_1,...,l_k)e((l_1+...+l_k)x).
	\end{equation}
\end{proposition}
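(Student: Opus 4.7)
The plan is to test $M$ on the pure characters $e_l(x):=e(lx)$, which span $\cP(\TT)$, and to read off the formula directly from what translation invariance forces. Because $\tau_h e_l = e(-lh)\,e_l$ is simply a scalar multiple of $e_l$, the multilinearity of $M$ together with the hypothesis $\tau_h M(p_1,\ldots,p_k)=M(\tau_h p_1,\ldots,\tau_h p_k)$ immediately gives
$$\tau_h M(e_{l_1},\ldots,e_{l_k}) = M\bigl(e(-l_1 h)e_{l_1},\ldots,e(-l_k h)e_{l_k}\bigr) = e(-Lh)\,M(e_{l_1},\ldots,e_{l_k}),$$
where $L:=l_1+\cdots+l_k$. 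Setting $f:=M(e_{l_1},\ldots,e_{l_k}) \in \cM(\TT)$, this is the scalar functional equation $f(x-h) = e(-Lh)\,f(x)$ for every $h\in\TT$ and almost every $x$.

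Next I would introduce $g(x):=e(-Lx)\,f(x)$. A direct substitution turns the functional equation for $f$ into translation invariance for $g$: $g(x-h)=g(x)$ for every $h$ and almost every $x$. It then remains to show that such a $g$ is essentially constant; setting $\sigma(l_1,\ldots,l_k)$ equal to that constant value yields $f(x)=\sigma(l_1,\ldots,l_k)\,e(Lx)$, which is precisely the claim, and then defines $\sigma:\ZZ^k\to\CC$ pointwise as $(l_1,\ldots,l_k)$ is varied.

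The only delicate point is the passage from measurable translation invariance to a.e.\ constancy, since no integrability is assumed on $g$. A standard Fubini trick handles this: the set $A:=\{(x,h)\in\TT\times\TT : g(x-h)=g(x)\}$ has full measure because each $h$-slice does, so for a.e.\ $x_0\in\TT$ the identity $g(x_0-h)=g(x_0)$ holds for a.e.\ $h$, which after the substitution $y=x_0-h$ asserts $g(y)=g(x_0)$ for a.e.\ $y$. Fixing one such $x_0$ then defines $\sigma(l_1,\ldots,l_k):=g(x_0)$. Alternatively, if one assumes $f\in L^1(\TT)$, the same conclusion follows in one line by taking $m$-th Fourier coefficients of $f(\cdot-h)=e(-Lh)f(\cdot)$ to obtain $\bigl(e(-mh)-e(-Lh)\bigr)\widehat f(m)=0$ for all $h$, forcing $\widehat f(m)=0$ for $m\neq L$ and identifying $\sigma(l_1,\ldots,l_k)=\widehat f(L)$.
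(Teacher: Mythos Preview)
Your argument is correct and shares its core with the paper's: both feed the characters $e_l$ into $M$, use $\tau_h e_l=e(-lh)e_l$ together with multilinearity and translation invariance, and arrive at the functional equation $f(x-h)=e(-Lh)f(x)$ for $f:=M(e_{l_1},\ldots,e_{l_k})$. The only difference is in how that equation is resolved. You reduce to showing that $g:=e(-L\cdot)f$ is a.e.\ constant via a Fubini argument; the paper instead observes that since $\cM(\TT)$ consists of genuine Borel functions (not a.e.\ equivalence classes), the hypothesis holds pointwise, so one may simply set $x=0$ (equivalently, use $f(x)=(\tau_{-x}f)(0)$) to read off $f(x)=e(Lx)f(0)$ and take $\sigma(l_1,\ldots,l_k)=f(0)$. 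Your extra measure-theoretic care is not needed here, though it would be exactly the right fix if the codomain were $L^0(\TT)$, where point evaluation is meaningless.
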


\begin{proof}
	The key identity underlying the proof is $f(x)=(\tau_{-x}f)(0)$. Applying this identity to $M(e(l_1\cdot),...,e(l_k\cdot))$ and using translation invariance, we get
	\begin{equation}
		M(e(l_1\cdot),...,e(l_k\cdot))(x)=(\tau_{-x}M(e(l_1\cdot),...,e(l_k\cdot)))(0)=M(\tau_{-x}e(l_1\cdot),...,\tau_{-x}e(l_k\cdot))(0)
	\end{equation}
	But $\tau_{-x}e(ly)=e^{2\pi il(x+y)}=e(ly)e(lx)$, so we get
	\begin{equation}
		M(e(l_1\cdot),...,e(l_k\cdot))(x)=M(e(l_1x)e(l_1\cdot),...,e(l_kx)e(l_k\cdot))(0)=(M(e(l_1\cdot),...,e(l_k\cdot))(0))e((l_1+...+l_k)x)
	\end{equation}
	Taking $\s(l_1,..,l_k)=M(e(l_1\cdot),...,e(l_k\cdot))(0)$ proves the theorem.
\end{proof}
\begin{remark}
	This proof easily generalizes to $\cP(\TT^d)^k\to\cM(\TT^d)$. We remark that a version of this theorem is also true for functions defined on $\RR$, but the proof is more difficult.
\end{remark}
\section{Multiscale Expansion}\label{sec:Multi}
In this section, we assume Turing Hypothesis \ref{hyp:Turing}. Let $\ell,r$ be the left and right eigenvectors associated to $\tl(k_*,0)$ of the matrix $S(k_*,0)$ and $\Pi=r\ell$. 
For $\mu=\eps^2$, we seek an approximate solution to \eqref{eq:MasterEqn} of form
\begin{equation}\label{eq:Ansatz}
\begin{split}
	U^{\e}(x,t)=\frac{1}{2}\e A(\Hx,\Ht)e^{i\xi}r+c.c.+\e^2\left(\Psi_0^2(\Hx,\Ht)+\frac{1}{2}\Psi_1^2(\Hx,\Ht)e^{i\xi}+c.c.+\frac{1}{2}\Psi_2^2(\Hx,\Ht)e^{2i\xi} \right)\\+\e^3\Psi_0^3(\Hx,\Ht)+\frac{1}{2}\e^3\sum_{j=1}^3\Psi_j^3(\Hx,\Ht)e^{ij\xi}+c.c.
\end{split}
\end{equation}
where $\xi=k_*(x-d_*t)$, $\Hx=\e(x-(d_*+\delta) t)$, $\Ht=\e^2t$, and $d_*,\delta\in\RR$ 
are as yet undetermined constant, that is consistent to $O(\e^3)$, with truncation error defined as
$$
	\cR:=U_t^\e-L(\mu)U^\e-\cN(U^\e)=O(\e^4).
$$
In \eqref{eq:Ansatz}, the subscript identifies the 
(discrete) Fourier mode and the superscript denotes the order of $\e$ at which the coefficient appears.

The rest of this section is devoted to the proof of the following theorem.

\begin{theorem}\label{3.1}
	For any sufficiently smooth $A$ satisfying the 
	complex Ginzburg-Landau equation \eqref{eq:cGL} on $0\leq \Ht\leq T$, there exists for $0\leq t\leq T/\eps^2$ an approximate solution of \eqref{eq:MasterEqn} of the form \eqref{eq:Ansatz} and some choice of smooth $\cA_1:=\ell\Psi_1^2$ that is consistent to order $O(\e^3)$ where 
	$d_*=-\frac{\Im\tl(k_*,0)}{k_*}$ and $d_*+ \delta=-\Im\d_k\tl(k_*,0)$. 
	(There is no uniqueness here, as $\Psi_1^2$ and $\Psi_1^3$ are not fully determined at this order.)
\end{theorem}

Suppose the scaling $L(\mu)=L(0)+\e^2\d_\mu L(0)+\cO(\e^4)$. We compute the derivatives of the Ansatz $U^{\e}$, where the slow variables have been suppressed for notational clarity.
\begin{equation}\label{eq:AnsatzDt}
	U^{\e}_t(x,t)=
	\frac{1}{2}(-ik_*d_*\e)Ae^{i\xi}r+\frac{1}{2}\e^2(-(d_*+\delta) A_{\Hx}e^{i\xi}r-ik_*d_*\Psi_1^2e^{i\xi})+\e^3\frac{1}{2}A_{\Ht}e^{i\xi}r
	+c.c.+ other,
\end{equation}
where $c.c.$ denotes complex conjugate and $other$ denotes omitted terms that turn out to be 
extraneous for the purpose of deriving amplitude equations.
Specifically, these terms are either in 
discrete Fourier modes that have uniquely determined correctors, i.e. every mode but $\pm1$, or they are order $\e^4$ or higher
whereas the complex Ginzburg-Landau equation appears as a compatibility condition at order $\cO(\eps^3)$.
Continuing, one can show by an inductive argument that
\begin{equation}\label{eq:AnsatzDxj}
\begin{split}
	\d_x^j U^{\e}(x,t) =
	\frac{1}{2} \e (ik_*)^j Ae^{i\xi}r+\frac{1}{2} \e^2 (j(ik_*)^{j-1}A_{\Hx}e^{i\xi}r+(ik_*)^j\Psi_1^2e^{i\xi}+(2ik_*)^j\Psi_2^2e^{2i\xi})+
	\\
	+\frac{1}{2}\e^3(j(j-1)(ik_*)^{j-2}A_{\Hx\Hx}e^{i\xi}r+j(ik_*)^{j-1}\Psi_{1,\Hx}^2e^{i\xi} )+c.c. 
	+\cO(\e^4).
\end{split}
\end{equation}
Plugging this result into the formula for $L(0)U$, one finds that
\begin{equation}\label{eq:AnsatzL}
\begin{split}
	L(0)U^{\e}(x,t)=&
	\frac{1}{2}\e Ae^{i\xi}S(k_*,0)r+\e^2\cL_0(0)\Psi_0^2
	\\&+
	\frac{1}{2}\e^2(S(k_*,0)\Psi_1^2e^{i\xi}+S(2k_*,0)\Psi_2^2e^{2i\xi}-iA_{\Hx}e^{i\xi}\d_kS(k_*,0)r)+
	\\
	&+
	\frac{1}{2}\e^3(-A_{\Hx\Hx}e^{i\xi}\d_k^2S(k_*,0)r-i\d_kS(k_*,0)\Psi_{1,\Hx}^2e^{i\xi})+S(k_*,0)\Psi_1^3e^{i\xi}
	\\
	& +c.c.+other,
\end{split}
\end{equation}
where we've used $\d_k^lS(k,\mu)=\sum_{j=l}^m\binom{j}{l}i^l(ik)^{j-l}\cL_j(\mu)$. We 
next expand the nonlinearity in \eqref{eq:MasterEqn} into a Taylor series 
$$
\cN(U)=\cQ(U,U)+\cC(U,U,U)+\cO(|U|^4),
$$
where $\cQ$ is a bilinear form and $\cC$ is a trilinear form. 

Now we plug the Ansatz \eqref{eq:Ansatz} into \eqref{eq:MasterEqn} and collect terms of the form 
$c \e^Ne^{iM\xi}$ where $N\in\NN$ and $M\in\ZZ$, setting the resulting sums to zero.
For $\e e^{i\xi}$, we obtain
\begin{equation}\label{eq:e11}
	A\left[S(k_*,0)+id_*k_*\right]r=0 ,
\end{equation}
which can be solved for $d_*$ by
\begin{equation}\label{eq:dstar}
	d_*:=-\frac{\Im\tl(k_*,0)}{k_*}.
\end{equation}
For $\e^2 e^{0i\xi}$, we have
\begin{equation}\label{eq:e20}
	\cL_0(0)\Psi_0^2+\frac{1}{4}|A|^2\left(\cQ(r,\overline{r})+\cQ(\overline{r},r) \right)=0
\end{equation}
which we may solve for $\Psi_0^2$ using the Turing hypotheses as
\begin{equation}\label{eq:Psi0}
	\Psi_0^2=|A|^2\left(-\frac{1}{4}\cL_{0}(0)^{-1}\left[\cQ(r,\overline{r})+\cQ(\overline{r},r)\right] \right)=|A|^2v_0,
\end{equation}
where $v_0\in\RR^n$ is a known vector. 
This is to be expected:
$\Psi_0^2$ should be real valued by Fourier inversion and the formula explicitly confirms this regardless of whether or not $r$ is a real vector.

\begin{remark}
To see that $v_0\in\RR^n$, we may use the fact that $\cQ$ is built out of derivatives of $\cN$,
hence descends to a bilinear form $\cQ:\RR^n\times\RR^n\rightarrow\RR^n$. 
	Once $\cQ$ is a real bilinear form, we see that $\overline{\cQ(U,V)}=\cQ(\overline{U},\overline{V})$ by writing $\cQ$ as a direct sum of quadratic forms $\cQ_j:\RR^n\times\RR^n\rightarrow\RR$. Recall that all bilinear forms of the type $Q:\RR^n\times\RR^n\rightarrow\RR$ are given by
		$Q(U,V)=\sum_{i,j=0}^n U_iQ_{ij}V_j$,
	where $Q_{ij}$ is a unique real matrix. It is critical that $v_0$ be real, as otherwise our Ansatz 
	wouldn't be real-valued.
\end{remark}

We next explore $\e^2e^{2i\xi}$, where we find
\begin{equation}\label{eq:e22}
	\left(S(2k_*,0)+2ik_*d_* \right)\Psi_2^2+\frac{1}{2}A^2\cQ(r,r)=0.
\end{equation}
By the Turing hypotheses, $S(2k_*,0)$ has eigenvalues of negative real part; so we can invert $S(2k_*,0)+2ik_*d_*$ and find that
\begin{equation}\label{eq:Psi2}
	\Psi_2^2=-A^2\frac{1}{2}\left(S(2k_*,0)+2ik_*d_*\right)^{-1}\cQ(r,r)=A^2v_2,
\end{equation}
where $v_2\in\CC^n$ is a known vector.
\begin{remark}
	In reaction diffusion, with $n=2$, we have that $v_2$ is also real since $d_*=0$ and $S(2k_*,0)$ is a real matrix. Generically $v_2$ is not a real vector, unlike $v_0$.
\end{remark}
Finally, we look at $\e^2e^{i\xi}$. Here, we obtain the linear equation
\begin{equation}\label{eq:e21}
	\left(S(k_*,0)+ik_*d_* \right)\Psi_1^2+A_{\Hx}\left(-i\d_k S(k_*,0)+d_*+\delta \right)r=0.
\end{equation}
For this to be solvable, it is necessary that $\ell$\eqref{eq:e21} vanish. 
Computing this quantity, we obtain
\begin{equation}\label{eq:delta2}
	A_{\Hx}\ell\left(-i\d_k S(k_*,0)+d_*+\delta\right)r=0,
\end{equation}
or, using the fact that
$-i\ell\d_kS(k_*,0)r=-i\d_k\tl(k_*,0)=\Im\d_k\tl(k_*,0)$, 
\begin{equation}\label{eq:delta}
	\delta=-\Im\d_k\tl(k_*,0)-d_*.
\end{equation}
Writing $\Psi_1^2=\cA_1(\Hx,\Ht)r+\psi^{(1)}$ where $\psi^{(1)}\in(I_n-\Pi)\CC^n$, following the notation of Lemma \ref{lem:EigenDeriv}, we can then solve for $\psi^{(1)}$ as
\begin{equation}\label{eq:GeneralCasePsi1}
	\psi^{(1)}=iA_{\Hx}N(I_n-\Pi)\d_kS(k_*,0)r.
\end{equation}
Note that $\cA_1$ is free, hence at this level $\Psi_1$ is not completely determined. See Section \ref{s:allorders} to see how to determine $\cA_1$ via a compatibility condition at a higher order of $\e$.

We proceed now to the final mode of interest, 
$\e^3e^{i\xi}$. This gives us
\begin{equation}\label{eq:e31}
\begin{split}
	A_{\Ht}r-(d_*+\delta)\Psi_{1,\Hx}^2=(S(k_*,0)+id_*k_*)\Psi_1^3-i\d_kS(k_*,0)\Psi_{1,\Hx}^2-\\-\d_k^2S(k_*,0)A_{\Hx\Hx}r+\d_\mu S(k_*,0)Ar+|A|^2Av_3
\end{split}
\end{equation}
where we've simplified
the nonlinear expression using \eqref{eq:Psi0} and \eqref{eq:Psi2}, and the observation that the only nonlinear terms appearing are of the form (modulo permutations in the arguments) $\cQ(\Psi_0,Ar)$, $\cQ(\Psi_2,\overline{Ar})$, and $\cC(Ar,Ar,\overline{Ar})$. 
Here, $v_3\in\CC^n$ is an (in principle) known, constant vector.
We are interested in the solvability of \eqref{eq:e31}; thus, as in \eqref{eq:e21}, we apply $\ell$ to both sides
to obtain
\begin{equation}\label{eq:cgl1}
	A_{\Ht}=-i\ell\d_k S(k_*,0)\psi^{(1)}_{\Hx}-A_{\Hx\Hx}\ell\d_k^2S(k_*,0)r+\d_\mu\tl(k_*,0)A+\gamma|A|^2A,
\end{equation}
where $\gamma=\ell v_3\in\CC$ is a known constant. See \eqref{eq:generalgamma} in Lemma \ref{lem:GeneralGamma} for the formula for $\g$ in terms of spectral structure of $S(k_*,0)$ and Fr\'echet derivatives of $\cN$.
Plugging \eqref{eq:GeneralCasePsi1} into \eqref{eq:cgl1} then yields
\begin{equation}\label{eq:cgl2}
	A_{\Ht}=\left(\ell\d_kS(k_*,0)(I_n-\Pi)N(I_n-\Pi)\d_kS(k_*,0)r-\ell\d_k^2S(k_*,0)r \right)A_{\Hx\Hx}+\d_\mu\tl(k_*,0)A+\gamma|A|^2A.
\end{equation}
Applying Lemma \ref{lem:EigenDeriv} to the matrix function $S(k,0)+id_*kI_n$, we may reduce the equation to its final form
\begin{equation}\label{eq:cgl3}
	A_{\Ht}=-\frac{1}{2}\d_k^2\tl(k_*,0)A_{\Hx\Hx}+\d_\mu\tl(k_*,0)A+\gamma|A|^2A,
\end{equation}
namely, the complex Ginzburg-Landau equation \eqref{eq:cGL} of the introduction.\\

For $j\neq1$, $\Psi_j^3$ can be uniquely determined in terms of $A$ and $\cA_1$ in an entirely similar manner to $\Psi_0^2$ and $\Psi_2^2$.\\

The final point which we wish to mention is that each successive mode is resolved as a bounded function of previous modes and finitely many of their derivatives, hence, by induction, a bounded function of $A$, $\cA_1$, and their derivatives.

\begin{remark}\label{rem:MSEPseudodiff}
	The argument in this section will work if we merely assume that we are given a symbol $S(k,\mu)$ satisfying Turing hypotheses \ref{hyp:Turing}. The relevant modification is that \eqref{eq:AnsatzL} is replaced by the following formal expression coming from Taylor expanding the symbol at each frequency
	\begin{equation}
		L(k,\mu)U^\e(\Hx,\Ht,\xi)=L(k_*,0)U^\e-iL_k(k_*,0)\d_{\Hx}U^\e-\frac{1}{2}L_{kk}(k_*,0)\d_{\Hx}^2U^\e+\mu L_\mu(k_*,0)U^\e+h.o.t.
	\end{equation}
	where $L_k(k_*,0)$ has symbol $S_k(k_*,0)$, and a similar convention holds for the other operators appearing above. This comes from the observation that if $U$ is $\frac{1}{k}$ periodic, then $\widehat{\d_x}=ik\eta=ik_*\eta+i\e\o\eta=\widehat{\d_\xi}+i\e\o\eta$ where $k-k_*=\e\o$. But we also have by the chain rule that $\d_x=\d_\xi+\e\d_{\Hx}$, thus we have that $\widehat{\d_{\Hx}}=i\o\eta$. Note that one needs to assume bounds on $S(k,\mu)$ and its derivatives in order to make sense of the above formula on all periodic functions.
	However, here we do not require these, since our Ansatz is compactly supported in frequency.
\end{remark} 

\section{Lyapunov-Schmidt Reduction}\label{sec:LSRed}

In this section, we look for steady state periodic solutions to \eqref{eq:MasterEqn}, 
assuming as before Turing Hypotheses \ref{hyp:Turing}.
We change coordinates slightly, by taking $x=x-dt$ where $d$ is close to $d_*$ as defined in \eqref{eq:dstar}. 
We will also write $k=k_*+\kappa$ where $\kappa$ is a small number. In this section, $\mu$ will denote the bifurcation parameter and $\e$ will refer to a universal scaling parameter. We assume the scalings $\mu\sim\e^2$ and $\kappa\sim\e$. 

\subsection{Preliminary Estimates}
Here, we make a spectral assumption on $\cL_m(\mu)$.  This is to ensure that $L(\mu)^{\text{``-1''}}$ is a bounded operator from $L^2_{per}(\RR;\RR^n)$ to $H^m_{per}(\RR;\RR^n)$.
\begin{hypothesis}\label{hyp:Spectrum}
	If $m$ is odd, then $\sigma(\cL_m(\mu))\subset\RR\backslash\{0\}$. If $m$ is even, then the ellipticity condition in Proposition \ref{prop:NecSuf} holds, i.e. $\sigma((-1)^{m/2}\cL_m(\mu))\subset\{\lambda\in\CC:\ \Re(\lambda)<0 \}$.
\end{hypothesis}
Before we show boundedness of the inverse operator, we will prove the following technical lemma.
\begin{lemma}\label{lem:SVDNorm}
	Let $A\in M_n(\CC)$ be an invertible matrix. Then 
	\begin{equation}\label{eq:SVDNorm}
		||A^{-1}||=\frac{1}{\sigma_{min}(A)}
	\end{equation}
	where $\sigma_{min}(A)$ is the smallest singular value of $A$.
\end{lemma}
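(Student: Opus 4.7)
The plan is to reduce the claim to a statement about the singular value decomposition and the fact that the operator norm is unitarily invariant. Recall that $\|A^{-1}\|$ denotes the operator norm induced by the Euclidean norm on $\CC^n$. The main identity I will exploit is the variational characterization
\begin{equation}
\|A^{-1}\| \;=\; \sup_{y\neq 0}\frac{\|A^{-1}y\|}{\|y\|} \;=\; \sup_{x\neq 0}\frac{\|x\|}{\|Ax\|} \;=\; \Big(\inf_{x\neq 0}\frac{\|Ax\|}{\|x\|}\Big)^{-1},
\end{equation}
where the substitution $x=A^{-1}y$ is valid precisely because $A$ is invertible, so $x\mapsto Ax$ is a bijection of $\CC^n\setminus\{0\}$ onto itself.

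Next, I would identify the infimum in the denominator with $\sigma_{\min}(A)$. For this, I would write the singular value decomposition $A=U\Sigma V^{*}$, where $U,V$ are unitary and $\Sigma=\mathrm{diag}(\sigma_1,\ldots,\sigma_n)$ with $\sigma_1\geq \cdots\geq \sigma_n=\sigma_{\min}(A)>0$ (positivity follows from invertibility of $A$). Since $V^*$ is unitary it preserves Euclidean norms, so setting $z=V^*x$ gives $\|Ax\|^2=\|\Sigma z\|^2=\sum_{j}\sigma_j^2|z_j|^2\geq \sigma_n^2\|z\|^2=\sigma_n^2\|x\|^2$, with equality when $z$ is the $n$-th standard basis vector. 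Hence $\inf_{x\neq 0}\|Ax\|/\|x\|=\sigma_{\min}(A)$, and combining with the displayed identity above yields $\|A^{-1}\|=1/\sigma_{\min}(A)$.

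Alternatively, and perhaps even more directly, one can read the norm off of the SVD of $A^{-1}$ itself: from $A=U\Sigma V^*$ we get $A^{-1}=V\Sigma^{-1}U^*$, whose singular values are $1/\sigma_n\geq\cdots\geq 1/\sigma_1$, so its largest singular value (which coincides with its operator norm) is $1/\sigma_{\min}(A)$. This is essentially a one-line proof once the SVD is in hand. There is no genuine obstacle in this argument; the only point that requires a moment's care is the use of invertibility to guarantee $\sigma_{\min}(A)>0$ (otherwise the right-hand side of \eqref{eq:SVDNorm} is undefined) and to justify the change of variables $x=A^{-1}y$.
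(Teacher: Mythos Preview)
Your proposal is correct and essentially matches the paper's argument. The paper also writes the SVD $A=UDV^*$, observes that $A^{-1}=VD^{-1}U^*$ is again an SVD, and then identifies the operator norm with the largest diagonal entry of $D^{-1}$ via a short bilinear-form computation; this is precisely your ``alternative'' one-line argument, while your first variational route is an equally standard and slightly different packaging of the same idea.
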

\begin{proof}
	Let $A=UDV^*$ be the singular value decomposition of $A$. Then $A^{-1}$ has singular value decomposition $A^{-1}=VD^{-1}U^*$, and so we can compute
	\begin{equation}\label{eq:InvNormCalc1}
		||A^{-1}||=\sup_{||x||=1}||A^{-1}x||=\sup_{||x||=||y||=1} |<A^{-1}x,y>|=\sup_{||x||=||y||=1}|<VD^{-1}U^*x,y>|,
	\end{equation}
	where $<x,y>$ is the usual inner product on $\CC^n$. Writing $x=Uu$ and $y=Vv$ 
	allows us to rewrite the last expression in \eqref{eq:InvNormCalc1} as
	\begin{equation}\label{eq:InvNormCalc2}
		\sup_{||x||=||y||=1}|<VD^{-1}U^*x,y>|=\sup_{||u||=||v||=1}|<D^{-1}U^*(Uu),V^*(Vv)>|=\sup_{||u||=||v||=1}|<D^{-1}u,v>|.
	\end{equation}
	The claim then follows from the fact that the norm of a diagonal matrix is the largest element on the diagonal.
\end{proof}


\begin{proposition}\label{prop:LinBdd}
	Assume Hypotheses \ref{hyp:Turing} and \ref{hyp:Spectrum}. Let $P$ be the projection onto the neutral mode, i.e. the bifurcating eigenvalue $\tl$ in Hypothesis \ref{hyp:Turing}, defined by
	$$
		PU(\xi):=r\ell \widehat{U}(1)e^{i\xi}+c.c.,
	$$
	where $r$ and $\ell$ are the right/left (resp.) eigenvectors of $S(k_*,0)$ associated to the neutral eigenvalue $\tl(k_*,0)$.
	Then
		\begin{equation}
			T(k,\mu):=\left((I-P)L(k,\mu)(I-P) \right)^{-1}:(I-P)L^2_{per}(\RR;\RR^n)\rightarrow (I-P)H^m_{per}(\RR;\RR^n)
		\end{equation}
		is a bounded operator for $\kappa$ and $\mu$ sufficiently small, with bounds independent of $\kappa$ and $\mu$. More generally, one has for all $s\in\RR$ that $T:H^s_{per}(\RR;\RR^n)\rightarrow H^{s+m}_{per}(\RR;\RR^n)$ in a bounded manner, with bounds only depending on $s$, $\kappa$ and $\mu$.
\end{proposition}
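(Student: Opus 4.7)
The plan is to reduce inversion to a mode-by-mode problem via Fourier series, then establish uniform bounds on each mode using the Turing hypotheses at low/moderate frequencies and Hypothesis \ref{hyp:Spectrum} at high frequencies.

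First, I would expand any periodic function as $u(\xi) = \sum_{j \in \ZZ} \hat u_j e^{ij\xi}$. Under the rescaling $\xi = kx$, the operator $L(k,\mu) = \sum_p k^p \cL_p(\mu) \partial_\xi^p$ diagonalizes on Fourier modes: on $e^{ij\xi}$ it acts as multiplication by the symbol matrix $S(kj,\mu)$. The neutral projection $P$ is supported on modes $j = \pm 1$, killing precisely the critical eigendirection $r$ (respectively $\bar r$) there while leaving all other Fourier coefficients untouched. Since $P$ acts separately on each Fourier component, inverting $(I-P)L(k,\mu)(I-P)$ on $(I-P)L^2_{per}$ amounts to inverting, mode by mode, the matrix $S(kj,\mu)$ for $j \neq \pm 1$ and its restriction to the range of $I-\Pi$ at $j = \pm 1$, where $\Pi = r\ell$ is the spectral projector from Lemma \ref{lem:EigenDeriv}.

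Second, I would estimate these mode-wise inverses uniformly by splitting into three frequency regimes. At the critical mode $j = \pm 1$, the map $(I-\Pi)S(k_*+\kappa,\mu)(I-\Pi)$ on its invariant subspace is invertible at $(\kappa,\mu) = (0,0)$ by the simplicity clause of (H2); perturbation continuity in $(\kappa,\mu)$ combined with Lemma \ref{lem:SVDNorm} then yields a uniform bound on the inverse for $|\kappa|, |\mu|$ small. For bounded $|j|$ with $j \neq \pm 1$, Turing (H3) gives strict negativity of $\Re \sigma(S(k_* j, 0))$, extended by continuity to small $\kappa$ and $\mu$, so Lemma \ref{lem:SVDNorm} again provides a uniform upper bound on $\|S(kj,\mu)^{-1}\|$. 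For $|j|$ large, Hypothesis \ref{hyp:Spectrum} ensures that $i^m \cL_m(\mu)$ has no zero eigenvalues; factoring $S(kj,\mu) = (kj)^m\bigl[i^m \cL_m(\mu) + O(1/j)\bigr]$ and invoking Lemma \ref{lem:SVDNorm} one more time yields $\|S(kj,\mu)^{-1}\| \lesssim |j|^{-m}$.

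Assembling these bounds via Plancherel, for $u \in (I-P)L^2_{per}$ with Fourier coefficients $\hat u_j$,
\[
\|T(k,\mu) u\|_{H^m}^2 \lesssim \sum_{j \in \ZZ} \langle j \rangle^{2m}\, \|S(kj,\mu)^{-1} \hat u_j\|^2 \lesssim \sum_j \|\hat u_j\|^2 = \|u\|_{L^2}^2,
\]
since the high-frequency estimate supplies exactly the factor $|j|^{-2m}$ needed to cancel the $\langle j \rangle^{2m}$ weight, while the finitely many low and critical modes contribute only a uniform constant. The general $H^s \to H^{s+m}$ bound follows by replacing $\hat u_j$ with $\langle j \rangle^s \hat u_j$ and repeating the same estimate. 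The main obstacle is the critical mode $j = \pm 1$: the full matrix $S(k_*+\kappa,\mu)$ has an eigenvalue approaching zero, so direct inversion fails and the projection $\Pi$ must be used to excise the singular direction. Verifying that the restricted operator remains uniformly invertible under perturbation rests on the simplicity of the critical eigenvalue asserted in (H2); this is the one step where the uniform-in-$(\kappa,\mu)$ character of the bound is nontrivial. The low-mode and high-mode regimes are more routine, but cleanly matching them across all frequencies is precisely where Lemma \ref{lem:SVDNorm} is indispensable, since it converts spectral information into operator-norm bounds in the form needed for summation over $j$.
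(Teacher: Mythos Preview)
Your proposal is correct and follows essentially the same approach as the paper: Fourier diagonalization reduces the problem to mode-wise matrix inversion, the analysis splits into the same three regimes (critical modes $j=\pm1$ handled via the projection $I-\Pi$ and simplicity from (H2), finitely many low modes via (H3) and continuity, high modes via the factorization $S(kj,\mu)=(kj)^m[i^m\cL_m(\mu)+O(1/j)]$ and Hypothesis \ref{hyp:Spectrum}), and Lemma \ref{lem:SVDNorm} plus Plancherel assemble the pieces. One minor imprecision: you say the full matrix $S(k_*+\kappa,\mu)$ has ``an eigenvalue approaching zero,'' but the critical eigenvalue $\tl(k_*,0)$ is purely imaginary and may be nonzero; the genuine reason the projection is needed is that the proposition is later applied with the transport term $dk\partial_\xi$ added, after which the critical eigenvalue is exactly zero---but this does not affect the argument as stated.
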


\begin{proof}
	Observe that $(I-P)L(k,\mu)(I-P)$ is a Fourier multiplier operator with multiplier
	\begin{equation}
		m(\eta, k,\mu)=\begin{cases}
			S(\eta k,\mu), \text{ for } \eta k\not=\pm k_*,\\
			(I-\Pi)S(k_*,\mu)(I-\Pi), \text{ for } \eta k=k_*,\\
			\overline{(I-\Pi)S(k_*,\mu)(I-\Pi)}, \text{ for } \eta k=-k_*.
		\end{cases}
	\end{equation}
	Hence, the inverse operator has multiplier $m^{-1}(\eta,k,\mu)$
	\begin{equation}
		m^{-1}(\eta,k,\mu)=\begin{cases}
			S(\eta k,\mu)^{-1}, \text{ for } \eta k\not=\pm k_*,\\
			\left((I-\Pi)S(k_*,\mu)(I-\Pi) \right)^{-1}, \text{ for } \eta k=k_*,\\
			\left(\overline{(I-\Pi)S(k_*,\mu)(I-\Pi)}\right)^{-1}, \text{ for } \eta k=-k_*.
		\end{cases}
	\end{equation}

	By Lemma \ref{lem:SVDNorm}, it suffices to show that there exist 
	$\kappa_0>0$, $\mu_0>0$, and $\eta_0>0$ such that 
	\begin{equation}\label{eq:HighFreq1}
		\inf_{|\mu|\leq\mu_0}\inf_{|\kappa|\leq\kappa_0}\inf_{|\eta|\geq \eta_0}\sigma_{min}(m(\eta, k,\mu))|\eta|^{-m}>c
	\end{equation}
	for some $c>0$ and
	\begin{equation}\label{eq:LowFreq1}
		\inf_{|\mu|\leq\mu_0}\inf_{|\kappa|\leq\kappa_0}\min_{|\eta|\leq \eta_0}\sigma_{min}(m(\eta, k,\mu))>0.
	\end{equation}
	For, supposing both \eqref{eq:HighFreq1} and \eqref{eq:LowFreq1}, we compute $||TU||_{H^m_{per}(\RR;\RR^n)}$ as
	\begin{equation}\label{eq:TBdd1}
		||T(k,\mu)U||_{H^m_{per}(\RR;\RR^n)}^2=
		\sum_{\eta\in\ZZ}(1+|\eta|^2)^m |m^{-1}(\eta k,\mu)\hat{U}(\eta)|^2.
	\end{equation}
	We can be generous in \eqref{eq:TBdd1} and use the norm bound on $m^{-1}(\eta,k,\mu)$ to write
	\begin{equation}\label{eq:TBdd2}
	 ||T(k,\mu)U||_{H^m_{per}(\RR;\RR^n)}^2\leq \sum_{\eta\in\ZZ}(1+|\eta|^2)^m ||m^{-1}(\eta, k,\mu)||^2\cdot|\hat{U}(\eta)|^2.
	\end{equation}

	We split the above into two sums, the first where $|\eta|\leq\eta_0$ and the second where $|\eta|>\eta_0$.
	On the first sum, we use \eqref{eq:LowFreq1} in combination with Lemma \ref{lem:SVDNorm} to see that $||m^{-1}(\eta,k,\mu)||\leq C$ where $C>0$ is a fixed constant, moreover we can be generous and bound $(1+|\eta|^2)^m$ by $(1+|\eta_0|^2)^m$. For the second sum, we use \eqref{eq:HighFreq1} and the lemma to observe that $||m^{-1}(\eta,k,\mu)||\leq\frac{1}{c|\eta|^m}$. This lets us bound \eqref{eq:TBdd2} by
	\begin{equation}\label{eq:TBdd3}
		||T(k,\mu)U||_{H^m_{per}}(\RR;\RR^n)\leq C^2(1+|\eta_0|^2)^m\left(\sum_{|\eta|\leq\eta_0}|\hat{U}(\eta)|^2 \right)+\left( \sum_{|\eta|>\eta_0}(1+|\eta|^2)^m \frac{1}{c^2|\eta|^{2m}}|\hat{U}(\eta)|^2\right).
	\end{equation}

	To complete this part of the argument, it is a simple computation to show that $\frac{(1+|\eta|^2)^m}{c^2|\eta|^{2m}}$ is a bounded function of $\eta$ for $\eta$ large enough so that we get
	\begin{equation}\label{eq:TBdd4}
		||T(k,\mu)U||_{H^m_{per}}(\RR;\RR^n)\leq C^2\sum_{\eta\in\ZZ}|\hat{U}(\eta)|^2=C^2||U||_{L^2_{per}(\RR;\RR^n)}
	\end{equation}
	by Plancherel's theorem. Moreover, the constant doesn't depend on $\mu$ or $\kappa$ provided that they are sufficiently small.

	It remains to prove that
	\eqref{eq:HighFreq1} and \eqref{eq:LowFreq1} can be arranged for $\mu$ and $\kappa$ sufficiently small. Starting with \eqref{eq:HighFreq1}, we claim that $\sigma(\eta,k,\mu)$ is a singular value of $S(\eta k,\mu)$ if and only if $\frac{1}{(\eta k)^m}\sigma(\eta,k,\mu)$ is a singular value of $\tilde{S}(\frac{1}{\eta k},\mu)$ as it appears in \eqref{eq:TildeS}. This follows from the spectral mapping theorem and the observation that
	\begin{equation}
		\overline{S(\eta k,\mu)}S(\eta k,\mu)=(\eta k)^{2m}\overline{\tilde{S}\left(\frac{1}{\eta k},\mu\right)}\tilde{S}\left(\frac{1}{\eta k},\mu\right).
	\end{equation}

	By continuity of $\sigma_{min}$, there exist 
	$\kappa_0$, $\mu_0$, and $\tilde{k}_0$ such that for $|\tilde{k}|\geq \tilde{k}_0$ we have that
	\begin{equation}
		\inf_{|\kappa|\leq\kappa_0}\inf_{|\mu|\leq\mu_0}\inf_{|\tilde{k}|\geq\tilde{k}_0}|\sigma_{min}(\tilde{S}(\frac{1}{\tilde{k}},\mu))-\sigma_{min}(\cL_m(\mu))|\geq\frac{1}{2}\sigma_{min}(\cL_m(\mu)).
	\end{equation}
	The triangle inequality gives us
	\begin{equation}
		\inf_{|\kappa|\leq\kappa_0}\inf_{|\mu|\leq\mu_0}\inf_{|\tilde{k}|\geq\tilde{k}_0}|\sigma_{min}(\tilde{S}(\frac{1}{\tilde{k}},\mu))\geq\frac{1}{2}\sigma_{min}(\cL_m(\mu)).
	\end{equation}
	But, $\sigma_{min}(\tilde{S}(\frac{1}{\eta k},\mu))=(\eta k)^{-m}\sigma_{min}(S(\eta k,\mu))$. Defining $\eta_0:=\frac{\tilde{k}_0}{k^m}$ and using the scaling gives
	\begin{equation}\label{eq:HighFreq2}
		\inf_{|\kappa|\leq\kappa_0}\inf_{|\mu|\leq\mu_0}\inf_{|\eta|\geq\eta_0}\sigma_{min}(S(k,\mu))|\eta|^{-m}\geq\frac{1}{2}\sigma_{min}(\cL_m(\mu))k^m.
	\end{equation}
	This proves \eqref{eq:HighFreq1} provided $\kappa_0<k_*$. 
	
	For \eqref{eq:LowFreq1}, we fix the $\eta_0$ from \eqref{eq:HighFreq1}. For $\eta=0$, we see that $\sigma_{min}(S(0,\mu))=\sigma_{min}(\cL_0(\mu))$ which we can uniformly bound from below by a constant for $\mu_0$ small enough. For $2\leq|\eta|\leq\eta_0$, we see that for $\kappa_0$ and $\mu_0$ small enough because $|\eta k-\eta k_*|=|\eta\kappa|\leq \eta_0\kappa_0$, hence we can uniformly bound $||S(\eta k,\mu)^{-1}||$ by continuity of the inverse map. The uniformity follows because we can ensure that $S(2k,\mu),...,S(\eta_0 k,\mu)$ is close to $S(2k_*,\mu),...,S(\eta_0k_*,\mu)$ uniformly in $\eta$ by taking $\kappa_0$ small enough. For $\eta=\pm1$, we apply the preceding observation to $(I_n-\Pi)S(k,\mu)(I_n-\Pi)$. This procedure gives us finitely many $\kappa_0$'s and $\mu_0$'s, hence we may take the minimum and complete the proof.

	The claim for all $s\in\RR$ follows in a similar manner.
\end{proof}
We note that both the Turing hypotheses and the ellipticity assumptions \ref{hyp:Spectrum} are, outside of the special case $m=1$, stable under Galilean coordinate changes.
\subsection{The Reduction Procedure}
Define projectors
\begin{align}\label{eq:Projectors}
V=PU&:=\Pi \hat{U}(1)e^{i\xi}+c.c.\\
W=QU&:=(I_n-\Pi)\hat{U}(1)e^{i\xi}+c.c.\\
X=RU&:=\sum_{l\not=\pm1}\hat{U}(l)e^{il\xi}
\end{align}
Note that the linear operator $L(k,\mu)$ is constant coefficient, so $RL(k,\mu)=L(k,\mu)R$ for every $k$ and $\mu$.
\begin{remark}
	The above is a slight abuse of notation. Technically, there should be 3 pairs of projectors defined by the formulas provided in \eqref{eq:Projectors}, because one should be acting on $H^m_{per}(\RR;\RR^n)$ and the other acting on $L^2_{per}(\RR;\RR^n)$. Since they're defined by the same formula, we will denote them by the same letter.

	As another remark, it is unnecessary to split $Q$ and $R$. The reduction can be carried out entirely using the projectors $P$ and $I-P$. Here, they are split in order to highlight the differences between the $\pm1$ Fourier modes and the other Fourier modes. At some level, a splitting of this type is required; if only to get the leading order behavior of the $\pm 1$ Fourier mode.
\end{remark}
We have that $L(k,\mu)U+\cN(U)=0$ if and only if 
$$
P\left(L(k,\mu)U+\cN(U) \right)=Q\left(L(k,\mu)U+\cN(U) \right)=R\left(L(k,\mu)U+\cN(U)\right)=0.
$$
Expanding, 
and using the commutation relation between $R$ and $L(k,\mu)$, we find
\begin{equation}\label{eq:XEqn1}
	L(k,\mu)X+kdX_\xi+R\cN(V+W+X)=0.
\end{equation}
Here,
$R\left(L(k,\mu)+dk\d_\xi\right)$ is an invertible operator for $k$ close enough to $k_*$ and $\mu$ close enough to 0, 
moreover the inverse is a bounded operator by Proposition \ref{prop:LinBdd}, so by the implicit function theorem,
\begin{equation}\label{eq:XEqn2}
	X=\Psi(V,W;\mu,k,d)=\cO(|V|^2,|W|^2).
\end{equation}
We have, further, 
that $\Psi$ retains the $SO(2)$ invariance of \eqref{eq:MasterEqn}. We record this observation in Fourier space as
\begin{equation}
	\hat{\Psi}(e^{i\xi_0}\hat{V},e^{i\xi_0}\hat{W};\mu,k,d)=e^{i\xi_0}\hat{\Psi}(\hat{V},\hat{W};\mu,k,d).
\end{equation}

\begin{remark}\label{wrinklermk}
	There is a slight technical wrinkle if $m=1$; here we may lose boundedness of the inverse operator 
	if $\pm d\in\sigma(\cL_1(\mu))$, i.e., the critical wave speed $d_*$ is
	a ``natural,'' or characteristic, speed of the linear operator, 
	and so this must be assumed not to happen.
	This is none other than the usual Turing hypothesis at $k=0$,
	transported to the natural rest frame of bifurcating waves.
\end{remark}

Before we look at the equations of $Q$ and $P$, we need to expand $L(k,\mu)+dk\d_\xi$ into a more workable form. Since we're only looking at $P$ and $Q$, it suffices to understand $S(k,\mu)+idkI_n$. Now we can write
\begin{equation}
	S(k,\mu)=\sum_{j=0}^m\frac{1}{j!}\d_k^j S(k_*,\mu)\kappa^j
\end{equation}
because for each fixed $\mu$, $S(k_*+\kappa,\mu)$ is a polynomial in $\kappa$. Next, we Taylor expand with respect to $\mu$ to find
\begin{equation}\label{eq:SymbolTaylor1}
	S(k,\mu)=\sum_{j=0}^m\frac{1}{j!}\d_k^jS(k_*,0)\kappa^j+\sum_{j=0}^m\frac{1}{j!}\d_\mu\d_k^jS(k_*,0)\kappa^j\mu+\cO(\mu^2).
\end{equation}

Now, we expect that $|V|\sim\e$, so ideally we will be able to ignore all terms of order at least $\e^3$. Briefly, the reason for this scaling is that we're aiming for an equation of the form $\tilde{\tl}(\kappa,\mu)+n(|V|^2)=0$ where $\tilde{\tl}(\kappa,\mu)=\Re\d_\mu\tl(k_*,0)\mu+\frac{1}{2}\Re\d_k^2\tl(k_*,0)\kappa^2$, and the scaling provided above is precisely the one where each term could be comparable in size, though it may happen in special circumstances that the nonlinearity is of a smaller order.
 With this in mind, we record the only important terms in \eqref{eq:SymbolTaylor1} in the 
 equation
\begin{equation}\label{eq:SymbolTaylor2}
	S(k,\mu)=S(k_*,0)+\d_kS(k_*,0)\kappa+\frac{1}{2}\d_k^2S(k_*,0)\kappa^2+\d_\mu S(k_*,0)\mu+\cO(\mu^2,\kappa^3,\mu\kappa).
\end{equation}
This allows us to compute the commutation relations between $P$, $Q$ and $L(k,\mu)$. 

First, we look at $QL(k,\mu)$ using
$$
\begin{aligned}
	QL(k,\mu)U&=(I_n-\Pi)S(k,\mu)\hat{U}(1)e^{i\xi}+c.c.\\
	&=(I_n-\Pi)\left(S(k_*,0)+\d_k S(k_*,0)\kappa+\cO(\mu,\kappa^2)\right)\hat{U}(1)e^{i\xi}+c.c.
	\end{aligned}
	$$
Since $\Pi$ commutes with $S(k_*,0)$, we find that
$$
	\begin{aligned}
QL(k,\mu)U&=\left(S(k_*,0)(I_n-\Pi)\hat{U}(1)e^{i\xi}+\kappa (I_n-\Pi)S_k(k_*,0)\Pi\hat{U}(1)e^{i\xi}\right)\\
		&\quad +c.c.+\cO(\mu U,\kappa QU,\kappa^2U).
	\end{aligned}
	$$
Hence, applying $Q$ to Eq. \eqref{eq:MasterEqn} on the left gives 
\ba\label{eq:QLEqn3}
	\big((S(k_*,0)+id_*k_*)(I_n-\Pi)\hat{U}e^{i\xi}&+\kappa (I_n-\Pi)S_k(k_*,0)\a e^{i\xi}r+\cO(\mu U,\kappa W,\kappa^2 U)\big)
	\\
	&\quad +c.c.+Q\cN(V+W+\Psi)=0,
\ea
 where $V=\frac{1}{2}\a e^{i\xi}r+c.c.$ for some scalar $\a\in\CC$. 
 In this equation, we have enforced 
the scaling $\delta\sim\kappa$; the reason for this scaling will become apparent later on. 
We may solve this equation using the implicit function theorem (observing that $(S(k_*,0)+id_*k_*)(I-\Pi_n)$ 
is invertible since all eigenvalues have nonzero real part), obtaining
\begin{equation}\label{eq:ExactPsi1}
	\psi_1:=(I_n-\Pi)\hat{U}(1)=-\frac{1}{2}\kappa(I_n-\Pi)N(I_n-\Pi)\a r+\cO(\mu U,\kappa^2 U,|V|^2).
\end{equation}

\begin{remark}
	To connect this result to the multiscale expansion, note that the identifications $\a\leftrightarrow A$ and $i\kappa\leftrightarrow\d_{\Hx}$ allow us to conclude that 
	$\psi^{(1)}$ in \eqref{eq:GeneralCasePsi1} is in fact the linearization of 
	$\psi_1$ in \eqref{eq:ExactPsi1}, up to a factor of $\frac{1}{2}$. 
\end{remark}

We conclude that $W=\Phi(V;\mu,k,d)=\cO(\kappa V,\mu V, |V|^2)$, moreover it inherits the $SO(2)$ invariance in the same way that $\Psi$ does.

Finally, we look at the equation for $P$. Here we find
\begin{equation}\label{eq:PEqn1}
	PL(k,\mu)(V+\Phi+\Psi)+idkV+P\cN(V+\Phi+\Psi)=0.
\end{equation}
Applying \eqref{eq:SymbolTaylor2}, we discover that
\begin{equation}
	P\left(S(k_*,0)+\kappa S_k(k_*,0)+\frac{1}{2}\kappa^2S_{kk}(k_*,0)+\d_\mu S(k_*,0)\mu\right)(V+\Phi)+idkV+P\cN(V+\Phi+\Psi)=0.
\end{equation}
Expanding somewhat further, we find
\begin{equation}\label{eq:PEqn2}
\begin{split}
	\tl(k_*,0)V+idkV+\d_k\l(k_*,0)\kappa V+\frac{1}{2}\kappa^2PS_{kk}(k_*,0)V+\cO(\kappa^2\Phi)+\d_\mu\tl(k_*,0)\mu V+\cO(\mu\Phi)\\+\kappa PS_k(k_*,0)\Phi+P\cN(V+\Phi+\Psi)=0.
\end{split}
\end{equation}
We can simplify a bit by applying \eqref{eq:ExactPsi1}, whence we obtain
\begin{equation}\label{eq:PEqn3}
\begin{split}
	\left(\tl(k_*,0)+idk+\d_k\tl(k_*,0)\kappa+\d_\mu\tl(k_*,0)\mu+\frac{1}{2}\kappa^2\ell S_{kk}(k_*,0)r-\right. \\ 
	\left. \kappa^2\ell S_k(k_*,0)(I_n-\Pi)N(I_n-\Pi)S_k(k_*,0)r \right)\frac{1}{2}\a e^{i\xi}r\\
	+c.c.+\cO(\mu\Phi,\kappa^2\Phi,\mu\kappa U,\mu^2 U,\kappa^3 U,\kappa|V|^2)+P\cN(V+\Phi+\Psi)=0.
\end{split}
\end{equation}

Now we may apply Lemma \ref{lem:EigenDeriv} to the matrix function $S(k_*+\kappa,0)+id_*(k_*+\kappa)$ with $M_j=\frac{1}{j!}\d_k^j(S(k,0)+id_*k)|_{k=k_*}$ and $x=\kappa$. In this notation, we can rewrite the relevant terms in \eqref{eq:PEqn3} as
\begin{equation}\label{eq:PEqn4}
\begin{split}
\left(\tl(k_*,0)+id_*k_*+\d_k\tl(k_*,0)\kappa+\frac{1}{2}\d_k^2\tl(k_*,0)\kappa^2+i(d_*\kappa+\delta k_*+\delta\kappa)\right)\frac{1}{2}\a e^{i\xi}r+c.c.+\\
+\cO(\mu\Phi,\kappa^2\Phi,\mu\kappa U,\mu^2 U,\kappa^3 U,\kappa|V|^2)+P\cN(V+\Phi+\Psi)=0.
\end{split}
\end{equation}
This equation can be solved if and only if the coefficients of $e^{i\xi}$ and $e^{-i\xi}$ vanish separately. So let's consider the coefficient of $e^{i\xi}$ by itself, where every term in \eqref{eq:PEqn4} is parallel to $r$, hence it is a scalar equation in disguise. Making these reductions, we get the 
equation
\begin{equation}\label{eq:Epseqn1}
\begin{split}
	\left(\d_k\tl(k_*,0)\kappa+\frac{1}{2}\d_k^2\tl(k_*,0)\kappa^2+\d_\mu\tl(k_*,0)\mu+i(d_*\kappa+\delta k_*+\delta\kappa) \right)\frac{1}{2}\a+\\+\cO(\mu\kappa\a,\kappa^3\a,\mu^2\a,\kappa\a^2)+\tilde{\cN}(\a;\mu,k,d)=0.
\end{split}
\end{equation}

We may then divide by $\frac{1}{2}\a$ in \eqref{eq:Epseqn1} to remove the trivial solutions, obtaining
\ba\label{eq:Epseqn2}
	\frac{1}{2}\big(\d_k\tl(k_*,0)\kappa+ & \frac{1}{2}\d_k^2\tl(k_*,0)\kappa^2+\d_\mu\tl(k_*,0)\mu+i(d_*\kappa+\delta k_*+\delta\kappa) \big)\\
	&\quad +
	\cO(\mu\kappa,\kappa^3,\mu^2,\kappa\a)+n(\a;\mu,k,d)=0.
	\ea
Exploiting $SO(2)$ invariance we see that $n(\a;\mu,k,d)=n(|\a|^2;\mu,k,d)$ 
because $\tilde{\cN}(e^{i\xi_0}\a;\mu,k,d)=e^{i\xi_0}\tilde{\cN}(\a;\mu,k,d)$ and $\tilde{\cN}$ 
inherits the $SO(2)$ invariance of $\cN$, $\Phi$ and $\Psi$. 

Evidently, \eqref{eq:Epseqn2} can be solved if and only if both its real and imaginary parts vanish simultaneously. First, we look at the real part,
\begin{equation}\label{eq:ReEpseqn2}
	\Re\d_\mu\tl(k_*,0)\mu+\frac{1}{2}\kappa^2\Re\d_k^2\tl(k_*,0)+\cO(\mu\kappa,\kappa^3,\mu^2,\kappa|\a|)+\Re n(|\a|^2;\mu,k,d)=0.
\end{equation}
To rewrite the above in a more usable form, we write $\mu=\e^2\tilde{\mu}$, $\kappa=\tilde{\kappa}\e$, and $\a=\tilde{\a}\e$ in order to isolate $\tilde{\mu}$ and remove the trivial solution. In this scaling, we can rewrite \eqref{eq:ReEpseqn2} as
\begin{equation}\label{eq:ReEpseqn3}
	\e^2\left(\Re\d_\mu\tl(k_*,0)\tilde{\mu}+\frac{1}{2}\tilde{\kappa}^2\Re\d_k^2\tl(k_*,0) \right)+\cO(\e^3)+\e^2\Re\tilde{n}(|\tilde{\a}|^2;\mu,k,d)=0.
\end{equation}
Taylor expanding $\tilde{n}(|\tilde{\a}|^2;\mu,k,d)$ as $\g_{LS}|\tilde{\a}|^2+\cO(\e)$, we see that the sign of $\Re\g_{LS}$ determines whether the Turing bifurcation is subcritical or supercritical. In particular, if $\Re\g_{LS}<0$ then 
it is supercritical and if $\Re\g_{LS}>0$ then it is subcritical. 
This follows from solving for $\tilde{\a}=\tilde{\a}(\e,\tilde{\kappa})$ in
\begin{equation}\label{eq:alphaeqn}
	|\tilde{\a}|^2=\frac{-\frac{1}{2}\tilde{\kappa}^2\d_k^2\tl(k_*,0)
	-\Re\d_\mu\tl(k_*,0)\tilde{\mu} }{\Re\g_{LS}}+\cO(\e).
\end{equation}

It's clear that this equation has a unique positive solution for $\e$ sufficiently small if it has a solution at all. 
Note that if this equation has a solution for $\tilde{\a}$ at $\tilde{\kappa}=0$, we need $\tilde{\mu}$ and $\Re\g_{LS}$ to have opposite signs, which is why it's supercritical when $\Re\g_{LS}<0$.  We will show in Subsection \ref{subsec:TwoGammas} that $\g_{LS}$ is the same $\g$ as the one in Section \ref{sec:Multi}, so that this temporary subscript can be dropped. We know by \eqref{eq:QLEqn3} and \eqref{eq:XEqn2} that our desired solution $U$ admits the expansion
\begin{equation}
	U=\frac{1}{2}\a e^{i\xi}r+c.c.+\Phi(\a;k,\mu,d)+\Psi(\a;k,\mu,d)
\end{equation}
with $\Psi=\cO(\a^2)$ and $\Phi=\cO(\e\a,\a^2)$. Since we've adopted the scaling $\a=\cO(\e)$, this in turn implies that $U=\frac{1}{2}\a e^{i\xi}r+c.c.+\cO(\e^2)$ with $\a=\e\tilde{\a}$ and $\tilde{\a}$ taken to be the unique positive solution of \eqref{eq:alphaeqn}.

For the imaginary part of \eqref{eq:Epseqn2}, we find
\begin{equation}
	\Im\d_k\tl(k_*,0)\kappa+\frac{1}{2}\Im\d_k^2\tl(k_*,0)\kappa^2+\Im\d_\mu\tl(k_*,0)\mu+d_*\kappa+\delta k_*+\delta\kappa+\Im n(|\a|^2;\mu,d,k)+\cO(\e^3)=0,
\end{equation}
which is solvable for $\delta$ as a function of $\kappa,\a$ by the implicit function theorem since $k_*\not=0$. 
Note that to lowest order
\begin{equation}
	k_*\delta=-\Im\d_k\tl(k_*,0)\kappa-d_*\kappa.
\end{equation}
If we divide by $\kappa$ and let $\tilde{\delta}=\frac{\delta}{\kappa}=\cO(1)$, then we find that
\begin{equation}
	k_*\tilde{\delta}=-\Im\d_k\tl(k_*,0)-d_*.
\end{equation}
Comparing with \eqref{eq:delta}, we see that the $\delta$ in \eqref{eq:delta} is in fact $k_*\tilde{\delta}$ to lowest order.

Overall, in this section we have shown the following theorem.

\begin{theorem}\label{thm:LSReduction}
	There exists an $\eps_0>0$ and a $\nu_0>0$ so that for all $\e<\e_0$ all $\tilde{\kappa}^2\leq(1-\nu_0\e)\tilde{\kappa_E}^2$, where $\tilde\kappa_E^2$ is given by \eqref{krange}, there is a unique solution $\tilde{u}_{\e,\tilde{\kappa}}\in H^m_{per}([0,2\pi];\RR^n)$, with $\a>0$ at $\tilde{\kappa}=0$, and $d\in \RR$ satisfying \eqref{eq:MasterEqn} with $k=k_*+\e\tilde\kappa$. Moreover, $\tilde{u}_{\e,\tilde{\kappa_0}}$ admits the expansion
	\begin{equation}
		\tilde{u}_{\e,\tilde{\kappa}}=\frac{1}{2}\e\sqrt{\frac{-\frac{1}{2}\tilde{\kappa}^2\d_k^2\tl(k_*,0)
				-\Re\d_\mu\tl(k_*,0)\tilde{\mu} }{\Re\g_{LS}}}e^{i\xi}r+c.c.+\cO(\e^2).
	\end{equation}
\end{theorem}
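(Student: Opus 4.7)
The plan is to formalize the Lyapunov--Schmidt reduction already sketched in the section, peeling off successively the non-critical Fourier modes, then the non-neutral part of the $\pm 1$ modes, and finally reducing to a scalar bifurcation equation for a single complex amplitude $\a$.

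First I would decompose $U=V+W+X$ with $V=PU$, $W=QU$, $X=RU$ as in \eqref{eq:Projectors}, and apply the implicit function theorem to the $R$-projection \eqref{eq:XEqn1}. By Proposition \ref{prop:LinBdd}, the operator $R\bigl(L(k,\mu)+kd\d_\xi\bigr)$ is boundedly invertible on $H^s_{per}$ for $(k,\mu,d)$ close to $(k_*,0,d_*)$, with bounds uniform in these parameters. This yields $X=\Psi(V,W;\mu,k,d)=\cO(|V|^2,|W|^2)$, inheriting the $SO(2)$ invariance of $\cN$. Next I would tackle the $Q$-projection \eqref{eq:QLEqn3}: the leading symbol $(I_n-\Pi)\bigl(S(k_*,0)+id_*k_*\bigr)(I_n-\Pi)$ is invertible on the range of $I_n-\Pi$ by Turing Hypothesis (H2), so a second application of the implicit function theorem produces $W=\Phi(V;\mu,k,d)=\cO(\kappa V,\mu V,|V|^2)$, with leading-order expression \eqref{eq:ExactPsi1}.

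Substituting $W$ and $X$ back into the $P$-projection collapses the problem to a scalar equation on the one-complex-dimensional kernel spanned by $r\me^{i\xi}$. Writing $V=\tfrac12\a\me^{i\xi}r+\mathrm{c.c.}$ and factoring out the trivial solution $\a=0$ gives \eqref{eq:Epseqn2}. Using $SO(2)$ equivariance to write the nonlinear term as $\a\,n(|\a|^2;\mu,k,d)$ and rescaling $\mu=\e^2\tilde\mu$, $\kappa=\e\tilde\kappa$, $\a=\e\tilde\a$, I would split into real and imaginary parts. The real part, at leading order \eqref{eq:ReEpseqn3}, is solved uniquely for $|\tilde\a|^2>0$ by \eqref{eq:alphaeqn} provided $\Re\g_{LS}\neq 0$ (supercriticality) and $\tilde\kappa^2$ lies strictly below the bifurcation threshold; the implicit function theorem then gives a unique positive branch $\tilde\a=\tilde\a(\e,\tilde\kappa)$ for $\e$ small. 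The imaginary part is solved for $\delta$ (equivalently, for $d$) by the implicit function theorem about the value $k_*\tilde\delta=-\Im\d_k\tl(k_*,0)-d_*$, which is non-degenerate because $k_*\neq 0$. Reconstructing $\tilde u_{\e,\tilde\kappa}=V+\Phi+\Psi$ and using $\Phi,\Psi=\cO(\e^2)$ under the chosen scaling yields the expansion in the theorem statement.

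The main obstacles I anticipate are two. First, carefully tracking the $SO(2)$ equivariance through all three reduction layers is essential to collapse the complex scalar bifurcation equation into an effectively real equation for $|\tilde\a|$ together with a separate phase-speed equation for $\delta$; without this, one obtains a genuinely two-dimensional bifurcation problem whose solutions need not be unique up to translation, and the splitting of the $SO(2)$ orbit from the amplitude equation is the crucial structural feature that allows the implicit function theorem to close. Second, uniformity of the constants in the successive implicit function theorem steps across the rescaled parameters must be controlled, which requires both the uniform resolvent bound of Proposition \ref{prop:LinBdd} and a careful bookkeeping of the heterogeneous orders $\kappa\sim\e$, $\mu\sim\e^2$, $\a\sim\e$ in the Taylor expansion \eqref{eq:SymbolTaylor2}, to ensure that remainder terms absorbed as $\cO(\e^3)$ really are negligible compared to the leading quadratic balance that produces \eqref{eq:alphaeqn}.
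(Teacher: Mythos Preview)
Your proposal is correct and follows essentially the same approach as the paper: the three-layer reduction via the projectors $P$, $Q$, $R$, successive applications of the implicit function theorem (using Proposition~\ref{prop:LinBdd} for the $R$-step and invertibility of $(I_n-\Pi)(S(k_*,0)+id_*k_*)(I_n-\Pi)$ for the $Q$-step), and the final scalar bifurcation equation split into real and imaginary parts via $SO(2)$ equivariance all match the paper's argument. Your identification of the two main technical concerns---equivariance tracking and uniformity of the IFT constants under the heterogeneous scalings---is apt and consistent with what the paper handles.
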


\begin{remark}\label{rem:LSPseudodiff}
	In this section, we only really used that there was a symbol satisfying the Turing hypotheses and certain bounds on the eigenvalues and singular values. In particular, we need the existence of $K$ a compact neighborhood with nonempty interior of $(k_*,0)$ such that outside of $K$ there exists universal constants $s\geq 1$, $c>0$, and $\Lambda_0>0$ satisfying
	\begin{equation}
		c^{-1}(1+|k|^2)^\frac{s}{2}\leq \s_{\min}(S(k,\mu))\leq \s_{\max}(S(k,\mu))\leq c(1+|k|^2)^\frac{s}{2}
	\end{equation}
	for all $(k,\mu)\not\in K$ and that
	\begin{equation}
		\max\{\Re \tl_j(k,\mu) \}\leq -\Lambda_0
	\end{equation}
	for all $(k,\mu)\not\in K$. The first condition gives ellipticity and the second is a form of spectral stability. We need these because in the case of a  general symbol, these bounds are not automatic, whereas
	in the case of differential operators they follow from the sufficient conditions in Hypothesis \ref{hyp:Spectrum}.
\end{remark}

\br\label{nonTuringrmk}
In deriving the results of the previous two sections,
we have used nowhere the fact that $\Re \sigma(L(0))<0$ away from the critical
mode at $k=k_*$, $\lambda=i k_* d_*$, but only the (implied) properties that
(i) except for this critical mode, $\Re \sigma(L(0))\neq 0$ on the lattice $k_* \ZZ$ (i.e., nonresonance), 
and (ii) $\tilde \lambda(0,k)$ is stationary at $k=k_*$.
That is, (cGL) can also well-describe ``secondary'' Turing bifurcations, defined as local changes in stability
of the eigenvalues of the symbol $S(0,k)$.
\er

\section{More general nonlinearities}\label{sec:MGN}
In this section, we will take for simplicity $m=2$ where $m$ is the order of the system in \eqref{eq:MasterEqn}, 
and study the general quasilinear system
\begin{equation}\label{eq:MGNMasterEqn}
	u_t=(h(u;\mu)u_x)_x+f(u;\mu)_x+g(u;\mu),
\end{equation}
where $f,g:\RR^n\times\RR\rightarrow\RR^n$ are $C^{\infty}$ and $h:\RR^n\times\RR\rightarrow M_n(\RR)$ is $C^{\infty}$.  
Other values of $m$ follow by an entirely similar argument.

Suppose for the moment that $u(x,t)$ is an $H^1(\RR_t:H^2_{per}(\RR_x;\RR^n))$ solution to \eqref{eq:MGNMasterEqn}. Then by Sobolev embedding, for each fixed $t$, $u(x,t)\in C^1_{per}(\RR_x;\RR^n)$. Hence the quantity inside the first bracket is in $H^1_{per}(\RR_x;\RR^n)$ and the quantity in the second bracket is in $C^1_{per}(\RR:\RR^n)$.

\begin{remark}
	For the above bounds, it is important that we work in one spatial variable $x$.
	For higher dimensions, we would need to work in a higher-regularity space $H^s$, with
	$s$ chosen, as is standard, according to Sobolev embedding requirements.
	However the approach would be essentially the same.
\end{remark}

 Suppose that there is a smooth function $u_*=u_*(\mu)$ satisfying $g(u_*(\mu);\mu)=0$,
 and suppose that the linearized operator
 \begin{equation}\label{eq:MGNLinOp}
	 L(\mu):=h(u_*(\mu);\mu)\d_x^2+f_u(u_*(\mu);\mu)\d_x+g_u(u_*(\mu);\mu)
 \end{equation}
 admits a Turing bifurcation. Let $\cL_2(\mu):=h(u_*(\mu);\mu)$, $\cL_1(\mu):=f_u(u_*(\mu);\mu)$ and $\cL_0(\mu):=g_u(u_*(\mu);\mu)$ and suppose that $\cL_2(\mu)$ is positive-definite (note that this is not entirely necessary due to the fact that $\cL_2(\mu)$ needn't be symmetric, but is nice to have).

 \subsection{Multiscale Expansion}\label{s:5.1}
 Formally, we can expand the derivatives in \eqref{eq:MGNMasterEqn} as
 \begin{equation}\label{eq:MGNMasterEqnExpanded}
	 u_t=h_u(u;\mu)(u_x,u_x)+h(u;\mu)u_{xx}+f_u(u)u_x+g(u),
 \end{equation}
 where we think of $h_u$ as the bilinear form given by
 \begin{equation}\label{eq:huBil}
	 h_u(u;\mu)(U,V)=\sum_{i,j,k=1}^nh_{ij,u_k}(u;\mu)U_jV_ke_i.
 \end{equation}
 We adopt the convention that $h_{uu}(u;\mu)$ is the trilinear form given by
 \begin{equation}\label{eq:huuTri}
	 h_{uu}(u;\mu)(U,V,W)=\sum_{i,j,k,l=1}^nh_{ij,u_ku_l}(u;\mu)U_jV_kW_le_i.
 \end{equation}
 Writing $u(x,y)=u_*+U(x,t)$ and Taylor expanding $f$, $g$ and $h$ gives then
 \begin{equation}\label{eq:MGNMasterEqnTaylor}
 \begin{split}
	 U_t=& h_u(u_*;\mu)(U_x,U_x)+h_{uu}(u_*;\mu)(U_x,U_x,U)+\cO(|U|^2|U_x|^2)+\\
		 & +h(u_*;\mu)U_{xx}+h_u(u_*;\mu)(U_{xx},U)+\frac{1}{2}h_{uu}(u_*;\mu)(U_{xx},U,U)+\cO(|U_{xx}||U|^3)+\\
		 & +f_u(u_*;\mu)U_x+f_{uu}(u_*;\mu)(U_x,U)+\frac{1}{2}f_{uuu}(u_*;\mu)(U_x,U,U)+\cO(|U_x||U|^3)+\\
		 & +g(u_*;\mu)+g_u(u_*;\mu)U+\frac{1}{2}g_{uu}(u_*;\mu)(U,U)+\frac{1}{6}g_{uuu}(u_*;\mu)(U,U,U)+\cO(|U|^4).
 \end{split}
 \end{equation}
 In this equation, $g_{uu}$ and $g_{uuu}$ are the usual multilinear forms and we write
 \begin{equation}\label{eq:fuBil}
	 f_{uu}(u;\mu)(U,V)=\sum_{i,j,k=1}^nf_{i,u_j,u_k}(u;\mu)U_jV_ke_i
 \end{equation}
 and
 \begin{equation}\label{eq:fuTri}
	 f_{uuu}(u;\mu)(U,V,W)=\sum_{i,j,k,l=1}^nf_{i,u_j,u_k,u_l}(u;\mu)U_jV_kW_le_i.
 \end{equation}

 We take the same Ansatz as in section \eqref{sec:Multi}, as well as the scaling $L(\mu)=L(0)+\e^2\d_\mu L(0)+\cO(\e^4)$. Because the procedure is virtually identical to the one in section \eqref{sec:Multi}, we will only highlight parts of the argument that have significant change. 
 Notice that \eqref{eq:e11} and \eqref{eq:e21} are unchanged, since they were linear.

 Let $U_0=\frac{1}{2}Ae^{i\xi}r+c.c.$. For the analogue of \eqref{eq:e20}, given by
 \begin{equation}\label{eq:loce20}
 	S(0,0)\Psi_0+\widehat{h_u(\d_\xi^2 U_0,U_0)}(0)+\widehat{ h_u(\d_\xi U_0,\d_\xi U_0)}(0)+\widehat{f_{uu}(\d_\xi U_0,U_0)}(0)+\frac{1}{2}\widehat{g_{uu}(U_0,U_0)}(0)=0,
 \end{equation} 
 we need to verify that the nonlinearity present actually coincides with a real function. In the above, we dropped the arguments on $h_u$, $f_{uu}$, $g_{uu}$ for notational clarity; they should all be evaluated at $(u_*,0)$. To verify the reality of the nonlinear terms, we split the nonlinearity into terms containing only $f_{uu}$, $g_{uu}$ and $h_{u}$. 
The terms containing $g_{uu}$ were already present in the original form of \eqref{eq:e20} 
and have already been shown to be real. For the terms coming from $f_{uu}$ and $h_u$ we have the following claim.
 \begin{lemma}\label{lem:localbilform}
	 At order $\e^2$, the Fourier coefficient of $\cQ(\d_\xi^j U,\d_\xi^l U)$ at frequency zero, where $\cQ:\RR^n\times\RR^n\to\RR^n$ is a bilinear form, is of the form $|A|^2v$ where $v\in\RR^n$ is a known vector.
 \end{lemma}
 \begin{proof}
	For this order and Fourier mode, it suffices to consider the crude approximation $U(x,t)=\frac{1}{2}\e Ae^{i\xi}r+c.c.$. With this approximation in hand, we see that $\cQ(\d_\xi^j U,\d_\xi^lU)$ is a real-valued function because $\cQ$ and $U$ are real-valued, and hence has mean value in $\RR^n$. But we can also expand $\cQ(\d_\xi^j U,\d_\xi^lU)$ using the bilinearity to get
	\begin{equation}
	\begin{split}
		\cQ(\d_\xi^j U,\d_\xi^lU)=\frac{1}{4}\left[\cQ((ik_*^j Ar),(ik_*)^lAr)e^{2i\xi}+\cQ((ik_*^j Ar),(-ik_*)^l\overline{Ar})\right.\\
		\left.+\cQ((-ik_*^j\overline{Ar}),(ik_*)^lAr)+\cQ((-ik_*^j\overline{Ar}),(-ik_*)^l\overline{Ar})e^{-2i\xi} \right]
	\end{split}
	\end{equation}
	As in \eqref{eq:e20}, the zero Fourier coefficient of $\cQ(\d_\xi^jU,\d_\xi^lU)$ can be expressed as
	\begin{equation}
		\widehat{\cQ(\d_\xi^j U,\d_\xi^lU)}(0)=\frac{1}{4}|A|^2\left[\cQ((ik_*^j)r,(-ik_*)^l\overline{r})+\cQ((-ik_*)^j\overline{r},(ik_*)^lr)\right]
	\end{equation}
	which is easily seen to be of the form $|A|^2v$ where $v$ is a known vector. Note that $v$ is a priori complex, however, we've already established that it is real because $|A|^2v$ is the mean value of a real-valued function and $|A|^2$ is real.
 \end{proof}
 Applying the claim to each bilinear form appearing at order $\e^2e^{0i\xi}$ and arguing as before in 
 \eqref{eq:e20}-\eqref{eq:Psi0}, we can write $\Psi_0=|A|^2v_0$ where $v_0\in\RR^n$. 
 There are no essential changes to \eqref{eq:e21} in this context;
 it is straightforward to check that all terms that give an exponential of $e^{2i\xi}$ at order $\e^2$ are of the form $A^2v$ for some vector $v\in\CC^n$ from \eqref{eq:loce20}.

 The last thing to check is that the nonlinearity in the equation at order $\e^3$ and Fourier mode $e^{i\xi}$ has the form $|A|^2Av_3$ for some $v_3\in\CC^n$. This was already established for all terms with some collection of derivatives of $g$. For the trilinear terms, this is essentially immediate because the only way for the product of three terms in our 
 Ansatz to be $\cO(\e^3)$ it is necessary that each of these terms 
 be $\cO(\e)$. Once each term is $\cO(\e)$, it is a complex multiple $Ae^{i\xi}$ or $\overline{A}e^{-i\xi}$ and then for them to add to $e^{i\xi}$, it has to be $|A|^2Ae^{i\xi}$. Inspecting \eqref{eq:MGNMasterEqnTaylor}, we see that there are three new types of bilinear terms to handle
 \begin{itemize}
 	\item $f_{uu}(u_*;0)(U_x,U)$
 	\item $h_{u}(u_*;0)(U_x,U_x)$
 	\item $h_{u}(u_*;0)(U_{xx},U)$
 \end{itemize}
 For each type, there are essentially two cases: either the $\cO(\e^2)$ term is one slow derivative of $A$ or $\overline{A}$, or the $\cO(\e^2)$ term is one of $\Psi_0$, $\Psi_1$ or $\Psi_2$. 
 In either case the $\cO(\e)$ term
 is $A$ or $\overline{A}$. Note that any number of $\xi$ derivatives can be taken in either case and that $\Psi_0$ must appear as a $U$. The first case is impossible
 since the frequencies can't add to 1 and in the second case
 the only allowable options are essentially $\Psi_0 A$ and $\Psi_2\overline{A}$. But these kinds of terms are of the form $|A|^2Av$ for $v\in\CC^n$. This completes the modifications in the complex Ginzburg-Landau derivation; as everything linear is unchanged.

	\begin{remark}\label{genrmk}
		Heuristically, essentially any translation invariant nonlinearity can be used so long as it's smooth enough and quadratic near 0. One starts as before by Taylor expanding 
		\be\label{nlexpansion}
	\sN(U^\e;k)= \sQ(U^\e,U^\e)+\sC(U^\e,U^\e,U^\e)+\sQ_k(\d_{\Hx}U^\e,U^\e) + h.o.t.,
		\ee
	where $\sQ$, $\sQ_k$ and $\sC$ are translation invariant bilinear and trilinear forms respectively. Now by Proposition \ref{thm:multilin}, translation invariant multilinear operators are given by multilinear multipliers, so the arguments in this section can be modified to allow for general nonlinearities.

		We will provide more details deriving the amplitude equation for general nonlocal nonlinearities in the subsection \eqref{subsec:TwoGammas}, where we also establish that the constant $\g$ in (cGL) is the same as 
a corresponding one coming from Lyapunov-Schmidt reduction. Assuming that the amplitude equation is complex Ginzburg-Landau in this level of generality, it suggests that the key underlying structure that makes the amplitude equation complex Ginzburg-Landau is translation-invariance together with the property that
	that the kernel of the linear operator have complex dimension 1.

	A further remark is that translation-invariance is in some sense playing a dual role. On one hand, it implies that we can take eigenfunctions of $L(k,\mu)$ to be pure exponentials as opposed to essentially arbitrary smooth functions. More importantly, it is also gives some ``compatibility'' between the linear operator $L(k,\mu)$ and the nonlinearity $\sN$ in the sense that the multilinear forms arising from the Taylor expansion of $\sN$ map exponentials to an exponential of a known frequency. This is quite special because even for the simplest nonlinearities there is no reason to expect that the product of eigenfunctions is ever again an eigenfunction.
	\end{remark}

 \subsubsection{Expansion to all orders}\label{s:allorders}
  Here, we will tackle the question of higher order expansions in the multiscale expansion of
  approximate solutions of \eqref{std} with general quasilinear nonlinearity. 
  To do this, we change the notation of our Ansatz to
\begin{equation}\label{eq:higherorderansatz}
U^n=\frac{1}{2}\e Ae^{i\xi}r+c.c.+\sum_{k=2}^n\sum_{\eta=0}^k\frac{1}{2}\e^k(\Psi_{\eta}^ke^{i\eta\xi}+c.c.)+\frac{1}{2}\e^{n+1}\sum_{\eta=0}^{n+1}\Psi_\eta^{n+1}e^{i\eta \xi}+c.c.
\end{equation}
with the hypothesis that $\Psi_0^k$ is real-valued for all $k$. Define $\cA_k:=\ell\Psi_1^k$ to be the amplitude at order $k$. 
Our goal is the following theorem.
\begin{theorem}\label{allordersthm}
For any $n=2,3,4,...$ and any sufficiently smooth $\cA_1$ satisfying the 
complex Ginzburg-Landau equation \eqref{eq:cGL} and amplitudes $\cA_2,...,\cA_{n-1}$ satisfying 
\begin{equation}\label{eq:higherordercgl}
(\cA_k)_{\Ht}=-\frac{1}{2}\tl_{kk}(k_*,0)(\cA_k)_{\Hx\Hx}+\tl_\mu(k_*,0)\cA_k+\g(2|A|^2\cA_k+A^2\bar{\cA}_k)+F_k(A,\cA_2,...,\cA_{k-1})
\end{equation}
on $0\leq \hat t\leq T$,
where the $F_k$ are known smooth functions, then there exists an approximate solution of \eqref{eq:MasterEqn} of the form \eqref{eq:higherorderansatz} and some choices of smooth $\cA_n:=\ell\Psi_1^n$ and $\cA_{n+1}:=\ell\Psi_1^{n+1}$ that is consistent to order $O(\e^{n+1})$ where 
$d_*=-\frac{\Im\tl(k_*,0)}{k_*}$ and $d_*+ \delta=-\Im\d_k\tl(k_*,0)$.
	That is, it has truncation error $O(\e^{n+2})$.
on $0\leq \hat t\leq T$.
\end{theorem}
As before, we cannot claim uniqueness of this approximate solution. We will not pursue the question of whether or not the sequence $U^n$ actually converges.
\begin{proof}
	The first thing we will do is show that the $\Psi_\eta^k$ for $\eta\not=1$ can be constructed in terms of $A,\cA_2,...,\cA_{k-1}$. Note that $\Psi_\eta^k$ first appears as coefficient of $\e^ke^{i\eta\xi}$. Looking at this equation we find
	\begin{equation}\label{eq:PsiEtak}
	[S(k_*\eta,0)+ik_*d_*\eta]\Psi_\eta^k+F_{\eta,k}=0,
	\end{equation}
	where $F_{\eta,k}$ is in principle known and depends only on $\Psi_{\eta'}^{k'}$ with $k'<k$. To be a bit more precise about $F_{\eta,k}$, it has linear terms of the form $\d_k^j\d_\mu^l S(k_*\eta,0)\d_{\Hx}^j\Psi_\eta^{k-j-2l}$. What kinds of nonlinear terms actually appear depends on 
	the original nonlinearity in the system, but because they are all multilinear and the smallest available power of $\e$ is 1; they cannot depend on $\Psi_{\eta'}^k$ for any $\eta'$. So we construct the approximate solution inductively, with base case Section \ref{s:5.1}. From now on, we will focus on $\Psi_1^{n-1}$ and $\Psi_\eta^n$ for $\eta=0,2$ as they are the most important terms in deriving the amplitude equation for $\cA_{n-1}$. 

	In addition to existence, we need that $\Psi_0^n$ is real-valued; which we can establish with the following argument. It suffices to show that $F_{0,n}$ is real-valued. For the linear terms of $F_{0,n}$, this follows from the fact that the underlying linear operator of \eqref{eq:MGNMasterEqn} has real coefficients and $\d_\xi$, $\d_{\Hx}$ map real functions to real functions. A bilinear form $\cQ((\d_\xi+\e\d_{\Hx})^JU,(\d_\xi+\e\d_{\Hx})^KU )$ sends real-valued functions to real-valued functions and hence has real Fourier coefficient in mode 0. But then one can consider the cruder approximation $U=U^{n-1}$ which is real-valued by induction. A similar argument will work for any degree of multilinearity.

	Looking at the equation for $\Psi_1^{n+1}$, we find
	\ba \label{eq:Psi^n+1}
	(\Psi_1^{n-1})_{\Ht}&-(d_*+\delta)(\Psi_1^n)_{\Hx}=(S(k_*,0)+id_*k_*)\Psi_1^{n+1}-i\d_kS(k_*,0)(\Psi_1^n)_{\Hx}-
	-\d_k^2S(k_*,0)(\Psi_1^{n-1})_{\Hx\Hx}\\
	&\quad +\d_\mu S(k_*,0)\Psi_1^{n-1}+\frac{1}{4}\sQ(-1,2)(\bar{A}\bar{r},\Psi_2^n)+\frac{1}{2}\sQ(1,0)(Ar,\Psi_0^n)+\\
	&\quad +\frac{1}{4}\sQ(-1,2)(\bar{\Psi}_1^{n-1},\Psi_2^2)+\frac{1}{2}\sQ(0,1)(\Psi_0^2,\Psi_1^{n-1})+
	\frac{1}{16}[\sC(1,1,-1)(Ar,Ar,\overline{\Psi_1^{n-1}})\\
	&\quad +2\sC(1,-1,1)(Ar,\bar{A}\bar{r},\Psi_1^{n-1}) ]+\tilde{F}_{1,n+1}(A,\cA_2,...,\cA_{n-2}),
	\ea
	where we let $\sN$ be the nonlinearity in \eqref{eq:MGNMasterEqn}, $\sQ$ the multiplier of $D_u^2\sN(0)$,
	and $\sC$ the multiplier of $D_u^3\sN(0)$. We've also adopted the shorthand $\sQ(n,m)=\sQ(nk_*,mk_*)$, with the obvious modification for $\sC$. From our inductive argument, we have the expansion
	\begin{equation}
	(I_n-\Pi)\Psi_1^n=i(\cA_{n-1})_{\Hx}N(I_n-\Pi)\d_kS(k_*,0)r+G_n,
	\end{equation}
	where $G_n$ is a known function of $A,\cA_2,...,\cA_{n-2}$ and $N$ is the matrix
	\begin{equation*}
	N=[(I_n-\Pi)(S(k_*,0)+id_*k_*)(I_n-\Pi) ]^{-1}.
	\end{equation*}

	For \eqref{eq:Psi^n+1} to be solvable, it is necessary and sufficient that $\ell$\eqref{eq:Psi^n+1}=0. 
	For the linear terms, this means that
	\begin{equation}
	(\cA_{n-1})_{\Ht}-(d_*+\delta)(\cA_n)_{\Hx}=-i\ell\d_k S(k_*,0)(\cA_n)_{\Hx}r-\frac{1}{2}\tl_{kk}(k_*,0)(\cA_{n-1})_{\Hx\Hx}r+\tl_\mu(k_*,0)\cA_{n-1}+F_{n-1},
	\end{equation}
	where $F_{n-1}$ is a known function of $A,\cA_2,...,\cA_{n-2}$ coming from the $(I_n-\Pi)\Psi_1^n$ terms. Here we've used Lemma \ref{lem:EigenDeriv}to combine the leading order term of $(I_n-\Pi)\Psi_1^n$ with the $\d_k^2S(k_*,0)$ term. As before $(d_*+\delta)(\cA_n)_{\Hx}=i\ell\d_k S(k_*,0)r(\cA_n)_{\Hx}$, so we have the correct linear part for $\cA_{n-1}$.

	For the nonlinear terms, we will adopt the convention that $F_{n-1}$ is a known function that may change from line to line; but only depends on $A,\cA_2,...,\cA_{n-2}$. First, we focus on the quadratic terms of \eqref{eq:Psi^n+1}. There are two types of quadratic terms, those that depend directly on $\Psi_1^{n-1}$, and those that don't. We begin by recalling the formulas for $\Psi_2^2$ and $\Psi_0^2$ from Section \ref{sec:Multi},
	\begin{align*}
		\Psi_0^2&=|A|^2\left(-\frac{1}{4}S(0,0)^{-1}\left[\sQ(1,-1)(r,\overline{r})+\sQ(-1,1)(\overline{r},r)\right] \right) \\
		\Psi_2^2&=-A^2\frac{1}{4}\left(S(2k_*,0)+2ik_*d_*\right)^{-1}\sQ(1,1)(r,r) 
	\end{align*}
	Plugging these into the relevant terms in \eqref{eq:Psi^n+1}, we get for the quadratic terms directly depending on $\Psi_1^{n-1}$
	\begin{equation}
	\begin{split}
		\frac{1}{4}\sQ(-1,2)(\bar{\Psi}_1^{n-1},\Psi_2^2)+\frac{1}{2}\sQ(0,1)(\Psi_0^2,\Psi_1^{n-1})=\\
		=-A^2\bar{\cA}_{n-1}\frac{1}{16}\sQ(2,-1)\left(\left(S(2k_*,0)+2ik_*d_*\right)^{-1}\sQ(1,1)(r,r),r\right)-\\
		-\frac{1}{8}|A|^2\cA_{n-1}\sQ(1,0)(r,S(0,0)^{-1}\sQ(1,-1)(r,\bar{r}))+F
	\end{split}
	\end{equation}
	Turning to the other quadratic terms in \eqref{eq:Psi^n+1}, we expand out the nonlinearity slightly in \eqref{eq:PsiEtak} to get the dependence on $\Psi_1^{n-1}$ in $\Psi_0^n$ and $\Psi_2^n$. For $\Psi_0^n$, we get
	\begin{equation}\label{eq:Psi0n}
		S(0,0)\Psi_0^n+\frac{1}{4}\left(\sQ(1,-1)(\Psi_1^{n-1},\bar{A}\bar{r})+\sQ(1,-1)(Ar,\bar{\Psi}_1^{n-1} \right)+F_{n-1}=0
	\end{equation}
	and for $\Psi_2^n$, we get
	\begin{equation}\label{eq:Psi2n}
		[S(2k_*,0)+2ik_*d_*]\Psi_2^n+\frac{1}{2}\sQ(1,1)(\Psi_1^{n-1},Ar)+F_{n-1}=0.
	\end{equation}

	Note that $\Psi_2^n$ is in some sense twice $\Psi_2^2$, this extra factor of 2 comes from using the symmetry of the form to fix $\Psi_1^{n-1}$ in the first position. So feeding \eqref{eq:Psi0n} and \eqref{eq:Psi2n} into the appropriate terms in \eqref{eq:Psi^n+1}, we get
	\ba
		\frac{1}{4}\sQ(-1,2)(\bar{A}\bar{r},\Psi_2^n)+\frac{1}{2}\sQ(1,0)(Ar,\Psi_0^n)&=
		-\frac{1}{8}|A|^2\cA_{n-1}\sQ(-1,2)(\bar{r},[S(2k_*,0)+2ik_*d_*]^{-1}\sQ(1,1)(r,r))\\
		&\quad -A\frac{1}{8}\sQ(1,0)(r,(\bar{A}\cA_{n-1}+A\bar{\cA}_{n-1})\sQ(1,-1)(r,\bar{r}))+F_{n-1},
		\ea
	where we've noted that $\sQ(1,-1)(r,\bar{r})=\sQ(-1,1)(\bar{r},r)$ by symmetry. To simplify notation, we define the vectors in $\CC^n$
	\begin{equation}
	\begin{split}
		V_0&:=\sQ(1,0)(r,S(0,0)^{-1}\sQ(1,-1)(r,\bar{r})),\\
		V_2&:=\sQ(2,-1)\left(\left(S(2k_*,0)+2ik_*d_*\right)^{-1}\sQ(1,1)(r,r),r\right).
	\end{split}
	\end{equation}
	With these conventions, we can write the quadratic terms of \eqref{eq:Psi^n+1} as
	\begin{equation}
		-\frac{1}{16}A^2\bar{\cA}_{n-1}V_2-\frac{1}{8}|A|^2\cA_{n-1}V_0-\frac{1}{8}|A|^2\cA_{n-1}V_2-\frac{1}{8}(|A|^2\cA_{n-1}+A^2\bar{\cA}_{n-1})V_0+F_{n-1}.
	\end{equation}
	Collecting all of the nonlinear terms of \eqref{eq:Psi^n+1} and applying $\ell$ on the left, we get
	\begin{equation}
		(A^2\bar{\cA}_{n-1}+2|A|^2\cA_{n-1})(-\frac{1}{16}\ell V_2-\frac{1}{8}\ell V_0+\frac{1}{16}\sC(1,1,-1)(r,r,\bar{r}) )+F_{n-1}.
	\end{equation}
	Comparing with the expansion for $\gamma$ given later in Lemma \ref{lem:GeneralGamma} completes the proof, noting that as before, each successive mode is resolved as a bounded function of previous modes and finitely many of their derivatives, hence, by induction, a bounded function of $A$, $\cA_1$,...,$\cA_n$ and their derivatives.
\end{proof}
\begin{remark}
	As before, this can be adapted to the case of a nonlocal system.
\end{remark}

 \subsection{Lyapunov-Schmidt reduction}\label{s:LS2}
 In this section, we consider steady-state solutions to the system \eqref{eq:MGNMasterEqn} in the $\xi=k(x-dt)$ 
 coordinate, i.e., solutions $u$ of
 \begin{equation}\label{eq:MGNLS}
	 0=k^2(h(u;\mu)u_\xi)_\xi+kf(u;\mu)_\xi+g(u;\mu)+dku_\xi,
 \end{equation}
 where $d$ is close to $d_*$ as defined in \eqref{eq:dstar}. Assuming that $u$ is an $H^2$ solution to \eqref{eq:MGNLS}, by Sobolev embedding $u\in C^1$ and hence $h(u)$ is also $C^1$. This allows us to apply the product rule to \eqref{eq:MGNLS} as
 \begin{equation}\label{eq:MGNLS2}
	0=k^2h_u(u;\mu)(u_\xi,u_\xi)+k^2h(u;\mu)u_{\xi\xi}+kf_u(u;\mu)u_\xi+g(u;\mu)+dku_\xi.
 \end{equation}
 Supposing $u=u_*+U$, and adding and subtracting $k^2h(u_*;\mu)u_{\xi\xi}$, $kf_u(u_*;\mu)u_\xi$ and $g_u(u_*;\mu)U$ allows us to rewrite \eqref{eq:MGNLS2} as
 \ba\label{eq:MGNLS3}
	 0=k^2h(u_*;\mu)U_{\xi\xi}+&kf_u(u_*;\mu)U_\xi+dkU_\xi+g_u(u_*;\mu)U+k^2h_u(u;\mu)(U_\xi,U_\xi)+k^2\big(h(u;\mu)
	 \\
	 &\quad -h(u_*;\mu) \big)U_{\xi\xi} +k\big(f_u(u;\mu)-f_u(u_*;\mu) \big)U_\xi+g(u;\mu)-g_u(u_*;\mu)U.
	 \ea

 \begin{remark}
 	$U$ is an $H^2$ solution to \eqref{eq:MGNLS} if and only if it is an $H^2$ solution to \eqref{eq:MGNLS3}.
 \end{remark}

 \begin{lemma}\label{lem:NonlinL2}
 	The nonlinear expression of \eqref{eq:MGNLS3} is in $L^2_{per}(\RR;\RR^n)$. Moreover, if $\cN(U,U_\xi,U_{\xi\xi})$ denotes the nonlinear term in \eqref{eq:MGNLS3}, then there is a constant $R>0$ independent of $U$, and a constant $C>0$ such that $||\cN(U,U_\xi,U_{\xi\xi})||_{L^2_{per}(\RR;\RR^n)}\leq C||U||_{H^2_{per}(\RR;\RR^n)}^2$ for $||U||_{H^2_{per}(\RR;\RR^n)}\leq R$.
 \end{lemma}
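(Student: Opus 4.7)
The plan is to estimate each of the four nonlinear contributions in \eqref{eq:MGNLS3} separately, using the one-dimensional Sobolev embedding $H^2_{per}(\RR;\RR^n)\hookrightarrow C^1_{per}(\RR;\RR^n)$ together with Taylor expansion of the smooth functions $h$, $f$, $g$ at $u=u_*$. Let $R>0$ be small enough that the closed ball of radius $R$ centered at $u_*$ in $\RR^n$ lies within a compact set where $h$, $f$, $g$ and their first two derivatives are uniformly bounded; by the Sobolev embedding there is a constant $C_S$ such that $\|U\|_{C^1}\leq C_S\|U\|_{H^2_{per}}$, so restricting to $\|U\|_{H^2_{per}}\leq R/C_S$ guarantees $u_*+U$ stays in this compact set pointwise.

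First I would handle the quadratic derivative term $k^2 h_u(u;\mu)(U_\xi,U_\xi)$: since $\|h_u(u;\mu)\|_{L^\infty}$ is bounded uniformly by the preceding observation, and $\|U_\xi\|_{L^\infty}\leq C_S\|U\|_{H^2}$ while $\|U_\xi\|_{L^2}\leq \|U\|_{H^2}$, the pointwise estimate $|h_u(u;\mu)(U_\xi,U_\xi)|\lesssim \|U_\xi\|_{L^\infty}|U_\xi|$ gives an $L^2$ bound of order $\|U\|_{H^2}^2$. Next, for the term $k^2\bigl(h(u;\mu)-h(u_*;\mu)\bigr)U_{\xi\xi}$, write
\begin{equation}
h(u_*+U;\mu)-h(u_*;\mu)=\int_0^1 h_u(u_*+tU;\mu)U\dd t,
\end{equation}
so that $\|h(u;\mu)-h(u_*;\mu)\|_{L^\infty}\lesssim \|U\|_{L^\infty}\lesssim \|U\|_{H^2}$; pairing with $\|U_{\xi\xi}\|_{L^2}\leq \|U\|_{H^2}$ produces the desired quadratic bound. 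The term $k\bigl(f_u(u;\mu)-f_u(u_*;\mu)\bigr)U_\xi$ is treated identically, using $f_{uu}$ in place of $h_u$ in the integral representation.

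Finally, for the purely zeroth-order nonlinearity $g(u;\mu)-g_u(u_*;\mu)U$, use the second-order Taylor expansion
\begin{equation}
g(u_*+U;\mu)-g(u_*;\mu)-g_u(u_*;\mu)U=\int_0^1(1-t)g_{uu}(u_*+tU;\mu)(U,U)\dd t,
\end{equation}
combined with $g(u_*;\mu)=0$, to conclude a pointwise estimate of the form $C\|U\|_{L^\infty}|U|$, hence an $L^2$ bound by $C\|U\|_{H^2}\|U\|_{L^2}\leq C\|U\|_{H^2}^2$. Summing the four contributions with a common constant yields $\|\cN(U,U_\xi,U_{\xi\xi})\|_{L^2_{per}(\RR;\RR^n)}\leq C\|U\|_{H^2_{per}(\RR;\RR^n)}^2$ for all $\|U\|_{H^2}\leq R/C_S$, proving both claims of the lemma. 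No step here is a real obstacle; the only mild point requiring care is keeping the Sobolev embedding constant and the Taylor remainder bounds uniform in a fixed neighborhood of $u_*$, which is why the smallness radius $R$ must be chosen up front in terms of $C_S$ and the size of the derivatives of $h$, $f$, $g$.
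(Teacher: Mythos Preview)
Your proposal is correct and follows essentially the same approach as the paper: both proofs split the nonlinearity into the same four pieces, invoke the one-dimensional Sobolev embedding $H^2_{per}\hookrightarrow C^1$, and bound each piece using a mean-value/Taylor argument together with the $L^\infty$ control on $U$ and $U_\xi$ and the $L^2$ control on $U_{\xi\xi}$. The only cosmetic differences are that you write out explicit integral Taylor remainders where the paper simply cites the mean value theorem and Taylor's theorem, and for the $h_u(u;\mu)(U_\xi,U_\xi)$ term you use the direct pointwise estimate $\|U_\xi\|_{L^\infty}|U_\xi|$ whereas the paper invokes the $H^1$ algebra property---both routes give the same bound.
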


 \begin{proof}
 	By Sobolev embedding, we have that $||U||_{L^\infty}+||U_\xi||_{L^\infty}\leq C||U||_{H^2_{per}(\RR;\RR^n)}$ for some $C>0$.

 	Since we have one spatial variable, for all $s>\frac{1}{2}$ and each fixed period $X$ we have that $H^s_{per}([0,X];\RR)$ is an algebra. Hence the first term in the nonlinear expansion $h_u(u;\mu)(U_\xi,U_\xi)\in H^1_{per}(\RR;\RR^n)$ because $h_u(u;\mu)\in C^1$. The best estimate we can have for $||h_u(u;\mu)||_{L^\infty}$ in general is that $||h_u(u;\mu)||_{L^\infty}\leq C$ for $||U||_{L^\infty}$ sufficiently small. So for the first term we have the desired bound
 	\begin{equation}\label{eq:L2Est1}
	 	||h_u(u;\mu)(U_\xi,U_\xi)||_{L^2_{per}(\RR;\RR^n)}\leq C||h_u(u;\mu)||_{L^\infty}||U_\xi||_{H^1_{per}(\RR;\RR^n)}^2\leq C||U||_{H^2_{per}(\RR;\RR^n)}^2.
 	\end{equation}
 	For the second term, we can apply the mean value theorem to conclude
 	\begin{equation}
	 ||h(u;\mu)-h(u_*;\mu)||_{L^\infty}\leq 
		||h_u(u;\mu)||_{L^\infty}||U||_{L^\infty}\leq C||U||_{H^2_{per}(\RR;\RR^n)},
 	\end{equation}
 	so that we may bound
 	\begin{equation}\label{eq:L2Est2}
	 	||\left(h(u;\mu)-h(u_*;\mu) \right)U_{\xi\xi}||_{L^2_{per}(\RR;\RR^n)}\leq||h(u;\mu)-h(u_*;\mu)||_{L^\infty}||U_{\xi\xi}||_{L^2_{per}(\RR;\RR^n)}\leq C||U||_{H^2_{per}(\RR;\RR^n)}^2.
 	\end{equation}

 	An essentially identical estimate gives
 	\begin{equation}\label{eq:L2Est3}
	 	||\left(f_u(u;\mu)-f_u(u_*;\mu) \right)U_\xi||_{L^2(\RR;\RR^n)}\leq C||U||_{H^2_{per}(\RR;\RR^n)}^2.
 	\end{equation}
 	The final term to estimate is $g(u;\mu)-g_u(u;\mu)U$. By Taylor's theorem, we have
 	\begin{equation}\label{eq:L2Est4}
	 	||g(u;\mu)-g_u(u;\mu)U||_{L^2_{per}(\RR;\RR^n)}\leq C||g(u;\mu)-g_u(u;\mu)U||_{L^\infty}\leq C||U||_{L^\infty}^2\leq C||U||_{H^2_{per}(\RR;\RR^n)}^2,
 	\end{equation}
 	provided that $||U||_{L^\infty}$ is small enough. We've also used the observation that all our functions $U$ are $2\pi$ periodic in $\xi$ to get this bound with the first constant independent of $U$.
 	Combining \eqref{eq:L2Est1}, \eqref{eq:L2Est2}, \eqref{eq:L2Est3} and \eqref{eq:L2Est4} with the triangle inequality and the observation that $k=\cO(1)$ we obtain the result.
 \end{proof}

 	\begin{remark}
The proof of this lemma is the only place where we use the quasilinear structure of \eqref{eq:MGNMasterEqn},
		to avoid terms like $U_{\xi\xi}U_{\xi\xi}$ not estimable in $L^2$.
		Though they did not arise in this particular case, terms like $U_\xi U_{\xi\xi}$ can be bounded in the same manner using $||U_\xi U_{\xi\xi}||_{H^2}\leq C||U_\xi||_{\infty}||U_{\xi\xi}||_2$.
 	\end{remark}

 \begin{proposition}\label{prop:FreDiff}
 	The nonlinear operator $\cN$ as defined above is Fr\'echet differentiable.
 \end{proposition}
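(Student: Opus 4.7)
The plan is to obtain the candidate Fréchet derivative $D\cN(U)$ by formal term-by-term differentiation of each of the four summands of $\cN$, check that it defines a bounded linear map $H^2_{per}(\RR;\RR^n)\to L^2_{per}(\RR;\RR^n)$, and then verify that the remainder $R(V):=\cN(U+V)-\cN(U)-D\cN(U)V$ satisfies $\|R(V)\|_{L^2}=O(\|V\|_{H^2}^2)$, which is in particular $o(\|V\|_{H^2})$. The argument essentially repeats the estimates of Lemma \ref{lem:NonlinL2} applied to differences rather than to $\cN$ itself, and the essential tools are identical: Sobolev embedding $H^2_{per}\hookrightarrow C^1_{per}$ (valid because we work in one spatial dimension), the Banach algebra property of $H^s_{per}$ for $s>1/2$, the $C^\infty$ regularity of $h,f,g$, and Taylor's theorem with integral remainder.

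Writing $u=u_*+U$, the candidate derivative is produced by the chain and product rules on each summand. For example, the contribution to $D\cN(U)V$ coming from the semilinear term $k^2(h(u;\mu)-h(u_*;\mu))U_{\xi\xi}$ is
\begin{equation*}
k^2\,h_u(u;\mu)(U_{\xi\xi},V)\;+\;k^2(h(u;\mu)-h(u_*;\mu))V_{\xi\xi},
\end{equation*}
and the other three blocks are handled analogously. Each block is bounded from $H^2_{per}$ to $L^2_{per}$ because every coefficient that multiplies the top-order derivative $V_{\xi\xi}$ already sits in $L^\infty$ via Sobolev embedding, while every genuinely multilinear term in $(V,V_\xi)$ pairs one $L^2$ factor against coefficients and factors lying in $L^\infty$.

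To estimate $R(V)$, I would apply Taylor's theorem with integral remainder to each of the smooth scalar- or matrix-valued functions $h,h_u,f_u,g,g_u$ at the base point $u=u_*+U$, producing pointwise bounds of the form $|\varphi(u+V;\mu)-\varphi(u;\mu)-\varphi_u(u;\mu)V|\le C(\|U\|_{H^2})|V|^2$ valid uniformly for $\|V\|_{L^\infty}$ small. After substitution into $R(V)$ and collection, every resulting piece is a product of a coefficient carrying a pointwise quadratic-in-$V$ bound and a derivative of either $U$ or $V$; Hölder's inequality, combined with $\|V\|_{L^\infty}+\|V_\xi\|_{L^\infty}\le C\|V\|_{H^2}$, then yields $\|R(V)\|_{L^2}\le C(1+\|U\|_{H^2})\|V\|_{H^2}^2$.

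The main obstacle is the semilinear top-order contribution, namely $(h(u;\mu)-h(u_*;\mu))U_{\xi\xi}$ together with its linearization companion $(h(u;\mu)-h(u_*;\mu))V_{\xi\xi}$; both involve factors that only live in $L^2$. Here the quasilinear structure of \eqref{eq:MGNMasterEqn} is what rescues the argument: the $L^2$ factor is always paired with a coefficient that either sits in $L^\infty$ directly or carries a pointwise quadratic-in-$V$ Taylor remainder. Consequently the two dangerous contributions to $R(V)$ split into $(h(u+V;\mu)-h(u;\mu)-h_u(u;\mu)V)U_{\xi\xi}$, estimable in $L^2$ by $C\|V\|_{L^\infty}^2\|U_{\xi\xi}\|_{L^2}$, and $(h(u+V;\mu)-h(u;\mu))V_{\xi\xi}$, estimable by $C\|V\|_{L^\infty}\|V_{\xi\xi}\|_{L^2}$. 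Both are $O(\|V\|_{H^2}^2)$, which completes the verification and gives Fréchet differentiability.
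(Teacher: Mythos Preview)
Your proposal is correct and follows essentially the same route as the paper: a term-by-term treatment of the four summands of $\cN$, Sobolev embedding $H^2_{per}\hookrightarrow C^1_{per}$ to control $\|V\|_{L^\infty}+\|V_\xi\|_{L^\infty}$, and Taylor expansion of $h,h_u,f_u,g$ to extract the linear part and bound the remainder by $O(\|V\|_{H^2}^2)$. You also correctly isolate the only delicate point, namely the top-order pieces $(h(u+V)-h(u)-h_u(u)V)U_{\xi\xi}$ and $(h(u+V)-h(u))V_{\xi\xi}$, and handle them exactly as the paper does by exploiting the quasilinear structure so that the sole $L^2$ factor is always paired against an $L^\infty$ coefficient.
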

 \begin{proof}
 	Fix $U\in H^m_{per}(\RR;\RR^n)$, and let $V\in H^m_{per}(\RR;\RR^n)$ with $||V||$ sufficiently small. Recall that $\cN$ is defined by
 	\begin{equation}\label{eq:cNDef}
 	\begin{split}
 	\cN(U,U_\xi,U_{\xi\xi})=k^2h_u(u_*+U;\mu)(U_\xi,U_\xi)+k^2\left(h(u_*+U;\mu)-h(u_*;\mu) \right)U_{\xi\xi}+\\
 	+k\left(f_u(u_*+U;\mu)-f_u(u_*;\mu) \right)U_\xi+g(u_*+U;\mu)-g_u(u_*;\mu)U.
 	\end{split} 	
 	\end{equation}
 	We will show that $\cN$ is Fr\'echet differentiable by working term by term. 

	 In what follows, we can always bound $||V||_{L^\infty},||V_\xi||_{L^\infty}$ by $C||V||_{H^2}$ by Sobolev embedding, hence any error term featuring monomials in $||V||_{L^\infty}$,$||V_\xi||_{L^\infty}$ are acceptable error terms. 
	 We start with the first term in \eqref{eq:cNDef},
	 expanding $\cN_1(U):=k^2h_u(u_*+U;\mu)(U_\xi,U_\xi)$ as
 	\ba\label{eq:FDiffN1}
		\cN_1&(U+V)-\cN_1(U)=\\
		& k^2h_u(u_*+U+V;\mu)((U+V)_\xi,(U+V)_\xi) -k^2h_u(u_*+U;\mu)(U_\xi,U_\xi)\\
		& =k^2\big(h_u(u_*+U+V)-h_u(u_*+U) \big)(U_\xi,U_\xi)+k^2\big(h_u(u_*+U+V;\mu)(U_\xi,V_\xi)\\
		&+h_u(u_*+U+V;\mu)(V_\xi,U_\xi) \big)+\cO(||V||_{H^2}^2).
		\ea
	 As we did for $\cN$, we split \eqref{eq:FDiffN1} into terms. For the first term in the above, we can apply Taylor's theorem to $h_u$ to see that
 	\begin{equation}\label{eq:CN11}
	 	(h_u(u_*+U+V)-h_u(u_*+U))(U_\xi,U_\xi)=h_{uu}(u_*+U+V)(U_\xi,U_\xi,V)+\cO(||V||_{L^{\infty}}^2).
 	\end{equation}
	 Next, we consider $h_u(u_*+U+V;\mu)(U_\xi,V_\xi)$, its relative $h_u(u_*+U+V;\mu)(V_\xi,U_\xi)$ being essentially identical. Computing, we have
	\ba \label{eq:CN12}
		h_u(u_*+U+V;\mu)(U_\xi,V_\xi)-h_u(u_*+U;\mu)(U_\xi,V_\xi)&=h_{uu}(u_*+U;\mu)(U_\xi,V_\xi,V)\\
		& =\cO(||V_\xi||_{L^\infty}||V||_{L^\infty}).
		\ea
Combining the results of \eqref{eq:CN11} and \eqref{eq:CN12} we conclude that
		\ba\label{eq:FreDerivCN1}
		D\cN_1(U)(V)&=k^2h_{uu}(u_*+U;\mu)(U_\xi,U_\xi,V)+k^2\big(h_u(u_*+U;\mu)(U_\xi,V_\xi)\\
		&\quad +h_u(u_*+U;\mu)(V_\xi,U_\xi) \big).
		\ea

 	The next term of \eqref{eq:cNDef} that we look at is $\cN_2(U):=k^2\left(h(u_*+U;\mu)-h(u_*;\mu) \right)U_{\xi\xi}$. Expanding it in the same way as we expanded $\cN_1$ gives
 	\begin{equation}\label{eq:CN21}
	 	\cN_2(U+V)-\cN_2(U)=k^2\left(h(u_*+U+V)(U+V)_{\xi\xi}-h(u_*+U;\mu)U_{\xi\xi}\right)-k^2h(u_*;\mu)V_{\xi\xi}.
 	\end{equation}
 	As before, we have that
 	\begin{equation}\label{eq:CN22}
	 	h(u_*+U+V)U_{\xi\xi}-h(u_*+U)U_{\xi\xi}=h_u(u_*+U;\mu)(U_{\xi\xi},V)+\cO(||V||_{L^\infty}^2).
 	\end{equation}
 	For the last remaining interesting part of \eqref{eq:CN21} we have
 	\begin{equation}\label{eq:CN23}
	 	h(u_*+U+V;\mu)V_{\xi\xi}=h(u_*+U;\mu)V_{\xi\xi}+\cO(||V||_{L^\infty}||V_{\xi\xi}||_{L^2}).
 	\end{equation}
 	Combining \eqref{eq:CN22} and \eqref{eq:CN23} gives
 	\begin{equation}\label{eq:FreDerivCN2}
	 	D\cN_2(U)(V)=k^2h_u(u_*+U;\mu)(U_{\xi\xi},V)+k^2h(u_*+U;\mu)V_{\xi\xi}-k^2h(u_*;\mu)V_{\xi\xi}.
 	\end{equation}

 	Let $\cN_3(U):=k\left(f_u(u_*+U;\mu)-f_u(u_*;\mu) \right)U_\xi$. Here we have
 	\begin{equation}\label{eq:CN31}
	 	\cN_3(U+V)-\cN_3(U)=k\left(f_u(u_*+U+V;\mu)(U+V)_\xi-f_u(u_*+U;\mu)U_\xi \right)-kf_u(u_*;\mu)V_\xi.
 	\end{equation}
	 Similarly as before, we have
	 $$
	 \begin{aligned}
	 (f_u(u_*+U+ & V;\mu)-f_u(u_*+U;\mu))U_\xi +f_u(u_*+U+V;\mu)V_\xi=\\
& f_{uu}(u_*+U;\mu)(U_\xi,V)+f_u(u_*+U;\mu)V_{\xi}+\cO(||V_\xi||_{L^\infty}||V||_{L^\infty},||V||_{L^\infty}^2),
	 \end{aligned}
	 $$
giving 
	 \begin{equation}\label{eq:FreDerivCN3}
		 D\cN_3(U)(V)=k\left(f_{uu}(u_*+U;\mu)(U_\xi,V)+f_u(u_*+U;\mu)V_{\xi}-f_u(u_*;\mu)V_\xi \right).
	 \end{equation}

Finally, we treat the last term in \eqref{eq:cNDef}, $\cN_4(U):=g(u_*+U;\mu)-g_u(u_*;\mu)U$.
	 Computing gives
$$
\begin{aligned}
	\cN_4(U+V)-\cN_4(U)&=g(u_*+U+V;\mu)-g(u_*+U;\mu)-g_u(u_*;\mu)V\\
	& =g_u(u_*+U;\mu)V-g_u(u_*;\mu)V+\cO(||V||_{L^\infty}^2),
		 \end{aligned}
		 $$
		 and thus 
	 \begin{equation}\label{eq:FreDerivCN4}
		 D\cN_4(U)(V)=\left(g_u(u_*+U;\mu)-g_u(u_*;\mu) \right)V.
	 \end{equation}

Summing \eqref{eq:FreDerivCN1}, \eqref{eq:FreDerivCN2}, \eqref{eq:FreDerivCN3}, and \eqref{eq:FreDerivCN4} allows 
us to compute $D\cN(U)(V)$, in particular it shows that $\cN$ is Fr\'echet differentiable as desired. One can check that every term in $D\cN(U)$ maps $H^2_{per}(\RR;\RR^n)$ to $L^2_{per}(\RR;\RR^n)$ in a bounded manner. Note the absence 
of terms like $U_{\xi\xi}V_{\xi\xi}$; these would spoil Fr\'echet differentiability in the same way that 
they would spoil boundedness of $\cN$.
 \end{proof}

 The only remaining ingredient in the Lyapunov-Schmidt reduction is to verify $SO(2)$ invariance 
 of \eqref{eq:MGNMasterEqn}. But this a straightforward calculation in position space, where $SO(2)$ 
 acts by $U(\xi)\rightarrow U(\xi-\xi_0)$. Hence the argument in section \eqref{sec:LSRed} goes through with only 
 notational differences, because the only facts about the nonlinearity that were used were 
 $||\cN(U)||_{L^2}\leq C||U||_{H^2}^2$, $SO(2)$ invariance of $\cN(U)$, and Fr\'echet differentiability.

 \begin{remark}
	 In our Lyapunov-Schmidt reduction, 
	 we took care to avoid terms of the form $U_{\xi\xi}U_{\xi\xi}$ because we couldn't bound them in $L^2$.
	 However, the multiscale expansion for complex Ginzburg-Landau can handle these terms without any issues, at least at small orders; the failure of $L^2$ boundedness should become apparent at higher orders only.
	 More generally, one can adapt the arguments in Lemma 
	 \ref{lem:NonlinL2} to show that any nonlinearity of the form $\cN_1(U,\d_\xi U,\d_\xi^2U,...,\d_\xi^{m-1}U)\d_\xi^m U+\cN_2(U,\d_\xi U,...,\d_\xi^{m-1}U)$ is in $L^2_{per}(\RR;\RR^n)$ provided that $\cN_1$ and $\cN_2$ are smooth, $U\in H^m_{per}(\RR;\RR^n)$, $\cN_1(0,0,...,0)=0$, and $\cN_2$ vanishes to quadratic order at $0$.
	 With this discussion in mind, if we insist on $H^{m+1}_{per}(\RR;\RR^n)$ solutions and 
	 view all maps as being from $H^{m+1}_{per}(\RR;\RR^n)\rightarrow H^1_{per}(\RR;\RR^n)$, then there are 
	 no more issues bounding $||(\d_\xi^m U)^2||_{L^2}$ by $||U||_{H^{m+1}}^2$.
	 Thus, we may handle fully general nonlinearities at the price of 
	 further smoothness: perhaps to be expected, as
	 the hard work in the Ginzburg-Landau derivation is 
	 devoted to the linear part, with the nonlinearity barely featured.
 \end{remark}

\subsection{A Tale of Two $\g$'s}\label{subsec:TwoGammas}
In this subsection, we sketch the derivation of the amplitude equation for nonlinearities $\sN:H^s_{per}(\RR;\RR^n)\times\RR\rightarrow L^2_{per}(\RR;\RR^n)$ 
	satisfying the following hypotheses. We then show that the constant $\g$ 
	gotten by formal complex Ginzburg-Landau expansion agrees with the corresponding constant gotten by Taylor 
expansion of the term $n(|\a|^2;k,\mu,d)$ appearing in \eqref{eq:Epseqn2}.

	\begin{hypothesis}\label{hyp:nonlin}
		The nonlinearity satisfies
		\begin{enumerate}
			\item For each $y\in\RR$ let $\t_yf(x):=f(x-y)$. Then for all $u\in H^s_{per}(\RR;\RR^n)$, $\mu\in\RR$, $y\in\RR$ we have $\tau_y\sN(u,\mu)=\sN(\tau_yu,\mu)$. In other words, $\sN$ is translation invariant.\\
			\item For each $X>0$ let $H^s_{per}([0,X];\RR^n)$ denote the subspace of $X$ periodic functions and $k:=\frac{2\pi}{X}$. Then we have isomorphisms $I_k:H^s_{per}([0,2\pi];\RR^n)\to H^s_{per}([0,X];\RR^n)$ given by $I_ku(x)=u(kx)=:u(\xi)$. We assume that $\sN$ is smooth in the sense that the auxiliary map $\sN(u,k,\mu)$ defined by $\sN(u,k,\mu):=I_k^{-1}\sN(I_ku,\mu)$ is smooth as a map from $H^s_{per}([0,2\pi];\RR^n)\times(0,\infty)\times\RR\to L^2_{per}([0,2\pi];\RR^n)$.
			\item $\sN(0,k,\mu)=D_u\sN(0,k,\mu)=0$ for all $k,\mu$.
		\end{enumerate}
	\end{hypothesis}
	The second hypothesis on $\sN$ essentially says every $\sN$ restriction to a subspace of the form $H^s_{per}([0,X];\RR^n)$ is smooth and that the family of restrictions smoothly depends on the period. As an example, if $\sN(u,\mu)=\d_x u$, then $\sN(u,k,\mu)=k\d_\xi u$ by the chain rule. For a (linear) nonlocal example, fix $\varphi\in S(\RR)$ a Schwartz function and consider $\sN(u,\mu):=\varphi*u$ where $f*g=\int f(x-y)g(y)dy$ when the integral is absolutely convergent. Then by the change of variables $z=ky$
	\begin{equation}
		\sN(u,k,\mu)=\int \varphi\left(\frac{x}{k}-y\right)u(ky)dy=\frac{1}{k}\int\varphi\left(\frac{x-z}{k}\right)u(z)dz=\frac{1}{k}(I_k^{-1}\varphi)*u
	\end{equation}
	Morally, one wants to think of the map $H^s_{per}([0,2\pi];\RR^n)\times(0,\infty)\rightarrow H^s_{per}(\RR;\RR^n)$ given by $(u,k)\rightarrow I_ku$ as a homeomorphism with ``inverse'' $u\rightarrow (\overline{u},k)$ where $\frac{1}{k}$ is the minimal period of $u$ and $\overline{u}=I_k^{-1}u$. However, while $(u,k)\rightarrow I_ku$ is a continuous surjection, it dramatically fails to be injective. Equally troubling is that the proposed inverse map is only defined for nonconstant functions and fails to be continuous.
\begin{remark}
In the nonlocal example provided above, we can formally rewrite the nonlinearity as
		\begin{equation}\label{decomp}
			\sN(u,k,\mu)(\xi) 
			\sim \sum_{\eta\in\ZZ}\hat{\varphi}(k\eta)\hat{u}(\eta)e^{i\eta\xi} .
		\end{equation}
	\end{remark}
In the following derivation for $\g$, we are also allowing for general symbols $S(k,\mu)$ satisfying the 
Turing hypotheses. Recalling the expansion
	$$
	\sN(U^\e;k)= 1/2D_u^2\sN(0;k_*)(U^\e,U^\e)+\frac{1}{6}D_u^3\sN(0;k_*)(U^\e,U^\e,U^\e)+\frac{1}{2}\d_k D_u^2\sN(0;k_*)(\d_{\Hx}U^\e,U^\e) + h.o.t.
	$$
of \eqref{nlexpansion}, let $\sQ$ and $\sC$ denote the multipliers for $D_u^2\sN$ and $D_u^3\sN$ respectively,
as guaranteed by Lemma \ref{thm:multilin}.

	\begin{lemma}\label{lem:GeneralGamma}
		Informally identifying $\sQ(nk_*,mk_*;0)$ with $\sQ(n,m)$ and similarly for $\sC$, we have
			\ba\label{eq:generalgamma}
		\g&=\ell\big[\sQ(0,1)(-\frac{1}{8}S_0\Re\sQ(1,-1)(r,\bar{r}),r)\\
		&\quad
		+\sQ(2,-1)(-\frac{1}{16}S_2\sQ(1,1)(r,r),\bar{r})+\frac{1}{16}\sC(1,1,-1)(r,r,\bar{r}) \big].
		\ea
	\end{lemma}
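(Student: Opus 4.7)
The plan is to redo the multiscale derivation of the complex Ginzburg-Landau equation from Section \ref{sec:Multi} with the abstract, possibly nonlocal, nonlinearity $\sN$ satisfying Hypothesis \ref{hyp:nonlin}, and then to read off the Landau constant $\gamma$ by tracking the $|A|^2 A$ coefficient through the order-$\varepsilon^3 e^{i\xi}$ solvability condition. The enabling structural input is Proposition \ref{thm:multilin}: together with translation invariance and smoothness of $\sN(\cdot,k,\mu)$ (Hypothesis \ref{hyp:nonlin}(i)--(ii)), it guarantees that $\tfrac{1}{2}D_u^2\sN(0,k_*)$ and $\tfrac{1}{6}D_u^3\sN(0,k_*)$ act on pure exponentials through Fourier multipliers $\sQ(\eta_1,\eta_2)$ and $\sC(\eta_1,\eta_2,\eta_3)$. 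This allows us to substitute $\sQ$, $\sC$ for $\cQ$, $\cC$ throughout Section \ref{sec:Multi} without altering any of the algebraic steps.

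Proceed order by order. The $\varepsilon e^{i\xi}$ equation reproduces the definition of $d_*$. The $\varepsilon^2 e^{0i\xi}$ and $\varepsilon^2 e^{2i\xi}$ equations are algebraic, involve only the leading Ansatz and the $\sQ$-multiplier, and are solved via the Turing-invertible symbols $S_0 := S(0,0)^{-1}$ and $S_2 := (S(2k_*,0)+2ik_*d_*)^{-1}$, yielding closed expressions for $\Psi_0$ proportional to $|A|^2 S_0\,\Re\sQ(1,-1)(r,\bar r)$ and for $\Psi_2$ proportional to $A^2 S_2\,\sQ(1,1)(r,r)$. The $\varepsilon^2 e^{i\xi}$ equation is linear in the Ansatz and fixes $\delta$ and $(I-\Pi)\Psi_1$ exactly as in the local case.

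The crux is the $\varepsilon^3 e^{i\xi}$ equation. After applying $\ell$ on the left to invoke the Fredholm condition, the resulting amplitude equation has the cGL form \eqref{eq:cgl3}, and the coefficient of $|A|^2 A$ is identified as $\gamma = \ell v_3$, where $v_3$ collects the $|A|^2 A$ contributions from the nonlinear terms at this order. By a straightforward frequency census, exactly three sources contribute: (i) the $\sQ$-cross-pairing of $\Psi_0$ (itself proportional to $|A|^2$) with the leading $A$-mode, contributing via the multiplier $\sQ(0,1)$ evaluated on $(S_0\,\Re\sQ(1,-1)(r,\bar r),\,r)$; (ii) the $\sQ$-cross-pairing of $\Psi_2$ (proportional to $A^2$) with the conjugate leading mode, contributing via $\sQ(2,-1)$ evaluated on $(S_2\,\sQ(1,1)(r,r),\,\bar r)$; and (iii) the purely cubic $\sC$-contribution on two copies of the $A$-mode and one copy of the $\bar A$-mode, contributing via $\sC(1,1,-1)(r,r,\bar r)$. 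Substituting the explicit formulas for $\Psi_0$ and $\Psi_2$ and applying $\ell$ yields the three terms of \eqref{eq:generalgamma}.

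The main obstacle is the combinatorial bookkeeping of the $\tfrac{1}{2}$ normalizations in the Ansatz together with the symmetrization factors arising from evaluating the symmetric multilinear forms $\sQ$ and $\sC$ on sums of Fourier modes; this is purely mechanical but error-prone, and the stated coefficients $-\tfrac{1}{8}$, $-\tfrac{1}{16}$, $\tfrac{1}{16}$ are the direct output. A preliminary technical point, which Hypothesis \ref{hyp:nonlin} is designed to address, is ensuring that the formal Taylor expansion $\sN(U^\varepsilon;k)=\sQ(U^\varepsilon,U^\varepsilon)+\sC(U^\varepsilon,U^\varepsilon,U^\varepsilon)+h.o.t.$ makes sense on the spaces hosting the Ansatz; this is immediate because the Ansatz is compactly supported in frequency, so only the algebraic action of $\sQ$ and $\sC$ on finitely many modes is needed.
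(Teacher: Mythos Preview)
Your proposal is correct and follows essentially the same route as the paper's proof: Taylor expand $\sN$ using Hypothesis \ref{hyp:nonlin}, invoke Proposition \ref{thm:multilin} to treat $D_u^2\sN$ and $D_u^3\sN$ as multilinear multipliers $\sQ,\sC$, solve for $\Psi_0,\Psi_2$ at order $\e^2$, and then read off the $|A|^2A$ coefficient at order $\e^3e^{i\xi}$ after applying $\ell$. The only point the paper makes more explicit is the preliminary observation that $\d_k^j\sN(0,k,\mu)=\d_k^j D_u\sN(0,k,\mu)=0$ for all $j$, which justifies dropping the mixed $\kappa\,\d_kD_u^2\sN$ and $\mu\,\d_\mu D_u^2\sN$ terms from the expansion; your ``frequency census'' and appeal to Hypothesis \ref{hyp:nonlin}(iii) cover this, but it would strengthen the write-up to state it directly.
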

	\begin{proof}
		We start with the following key fact.
		\begin{obs}
			$\d_k^j\sN(0,k,\mu)=\d_k^jD_u\sN(0,k,\mu)=0$ for all $k>0$ and all $j\in\NN$.
		\end{obs}
		This follows from $\sN(0,k)=I_k\sN(I_k^{-1}0)\equiv0$ and $D_u\sN(\overline{u},k)=I_kD_u\sN(I_k^{-1}\overline{u})I_k^{-1}$ which is also identically zero when $\overline{u}=0$.
		We then Taylor expand the nonlinearity, and upon applying the above observation, discover that
		\begin{equation}
		\begin{split}
		\sN(U^\e,k,\mu)=\frac{1}{2}D^2_u\sN(0,k_*,0)(U^\e,U^\e)+\frac{1}{6}D^3_u\sN(0,k_*,0)(U^\e,U^\e,U^\e)+\\+\frac{1}{2}\kappa\d_k D_u^2\sN(0,k_*,0)(U^\e,U^\e)+\frac{1}{2}\mu\d_\mu D_u^2\sN(0,k_*,0)(U^\e,U^\e) +\cO(\e^4).
		\end{split}
		\end{equation}
		Observe that the $\d_\mu$ term is already $\cO(\e^4)$ and thus can be safely ignored.

		Since each form in the above is translation invariant, it follows that each is a multilinear Fourier multiplier operator, which we will denote by
		\begin{equation}
		\begin{split}
		D_u^2\sN(0,k,\mu)(U,V)&=\sum_{\eta_1,\eta_2\in\ZZ}\sQ(k\eta_1,k\eta_2;\mu)(\hat{U}(\eta_1),\hat{V}(\eta_2))e^{i\xi(\eta_1+\eta_2)},\\
		D_u^3\sN(0,k.\mu)(U,V,W)&=\sum_{\eta_1,\eta_2,\eta_3\in\ZZ}\sC(k\eta_1,k\eta_2,k\eta_3;\mu)(\hat{U}(\eta_1),\hat{V}(\eta_2),\hat{W}(\eta_3))e^{i\xi(\eta_1+\eta_2+\eta_3)}.
		\end{split}
		\end{equation}
		Writing $\kappa=\e\o$, we find that 
		\begin{equation}
		\kappa\d_k D_u^2\sN(0,k_*,0)(U^\e,U^\e)=\e \tilde{\sQ}(\d_{\Hx} U^\e, U^\e)
		\end{equation} for some known bilinear form $\tilde{\sQ}$. At $\cO(\e^2)$, 
		the relevant terms are given by
		\begin{equation}\label{eq:GenPsi}
		\begin{split}
		\Psi_2(\Hx,\Ht)=-\frac{1}{4}A(\Hx,\Ht)^2S_2\sQ(k_*,k_*;0)(r,r),\\
		\Psi_0(\Hx,\Ht)=-\frac{1}{8}|A(\Hx,\Ht)|^2S_0\left[\sQ(k_*,-k_*;0)(r,\bar{r})+\sQ(-k_*,k_*;0)(\bar{r},r) \right],
		\end{split}
		\end{equation}
		where we've used the notation $S_\eta=(S(k_*\eta,0)+id_*k_*\eta)^{-1}$ for $\eta\in\ZZ\backslash\{\pm1 \}$.
From this, we conclude that the nonlinearity contributes at $\cO(\e^3)$ and Fourier mode $e^{i\xi}$ the term
		\begin{equation}
		D^2_u\sN(0,k_*,0)(\Psi_0,\frac{1}{2}Ar)+D^2_u\sN(0,k_*,0)(\frac{1}{2}\Psi_2,\frac{1}{2}\bar{Ar})+\frac{1}{2}D^3_u\sN(0,k_*,0)(\frac{1}{2}Ar,\frac{1}{2}Ar,\frac{1}{2}\bar{Ar}).
		\end{equation}
Plugging in \eqref{eq:GenPsi} and applying $\ell$ gives the desired formula.
	\end{proof}

	\begin{remark}
It's perhaps surprising that the nonlinear term in the amplitude equation is always $|A|^2A$ regardless 
of the nature of the original nonlinearity. 
To understand this, we first sketch a reduction to a local system. Note that the 
Ansatz, when plugged into the equations, only 
experiences a finite amount of information about the multipliers $\sQ$, $\sC$, and the symbol $S$. 
Thus, as far as such computations are concerned,
there is no difference between $S$, $\sQ$, and $\sC$ and polynomials that agree at the appropriate points. 
		One has to be a bit careful which polynomials are used in the interpolation to ensure that the resulting multilinear operators are real valued and symmetric, but it is otherwise easy to interpolate. Moreover, it is straightforward to find an interpolating function $P(k,\mu)$ which is polynomial in $k$, smooth in $\mu$ and constant for $|\mu|\geq 1$, and matches the original symbol and the appropriate derivatives at the desired points. 
One may arrange, further, that $P(k,\mu)$ satisfy the Turing hypotheses by subtracting $Cp(k)^NId$ for $C\gg1$ and $N\gg1$ where $p(k):=k^2(k^2-k_*^2)(k^2-4k_*)^2$ and $C,N$ are independent of $\mu$. 

		From now on, therefore, assume that the system is local. For local systems, the relevant multilinear forms look like $\cQ(\d_x^Iu,\d_x^Jv )$ for $I,J\in\NN$ and $\cQ:\RR^n\times\RR^n\to\RR^n$ a fixed bilinear form, or $\cC(\d_x^Iu,\d_x^Jv,\d_x^Kw)$. The only way for a multilinear term to appear at order $\e^3$ is if it is either the product of 3 order $\e$ terms or it is the product of an order $\e^2$ term and an order $\e$ term. In the first case, all possible trilinear terms are $A^3,|A|^2A,|A|^2\bar{A},\bar{A}^3$ which occur at frequencies $3,1,-1,-3$ respectively, 
		these corresponding to the situation that all derivatives fall on exponential factors 
		$e^{\pm i\xi}$.
		So the trilinear contribution to the Ginzburg-Landau equations concerning
		frequencies $\pm 1$ only, is $C|A|^2A$. For bilinear terms, the only terms at order $\e^3$ involving $A_{\Hx}$ are $A_{\Hx}A$ and $A_{\Hx}\bar{A}$ 
		which occur at
		frequencies 2 and 0 respectively, 
		these terms 
		corresponding to the situation that one derivative falls on $A$ and all others fall 
		on the exponential factors $e^{\pm i\xi}$.
		As they do not involve frequencies $\pm 1$, these terms make no contribution 
		to the Ginzburg-Landau equation.
	\end{remark}

	For the moment, we will denote the $\g$ in Lemma \ref{lem:GeneralGamma} by $\g_{\CC GL}$. Note that we have 
	$$
	n(|\a|^2;k,\mu,d)=\g_{LS}|\a|^2+h.o.t.,
	$$
 where $n(|\a|^2;k,\mu,d)\a=\ell\widehat{\sN(\tilde{u}_{\e,\o},k,\mu)}(1)$.
 Our remaining main goal in this section is to 
 establish the following correspondence, rigorously validating the expansion (cGL).

	\begin{theorem}
		$\g_{\CC GL}=\g_{LS}$.
	\end{theorem}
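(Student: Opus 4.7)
The plan is to show that $\g_{LS}$ and $\g_{\mathbb{C}GL}$ are both obtained by applying $\ell$ to the Fourier mode $1$ of $\sN$ evaluated on the \emph{same} second-order truncation of the critical mode at $\mu=0$, $k=k_*$, and then extracting the coefficient of $|\a|^2\a$. Matching the two truncations — and checking that all other contributions are either lower-order in $\a$ or already absorbed into the (matched) linear coefficients of the amplitude equation — will yield the identity.

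First, I would match the second-order Fourier components produced by the two procedures. In the LS setup, write $\tilde u = V + W + X$ with $V = \tfrac12 \a e^{i\xi}r+\text{c.c.}$, and let $Z_\eta$ denote the Fourier mode $\eta$ of $W+X$. Applying the projector $R$ to \eqref{eq:XEqn1} for $\eta = 0$ and $\eta = 2$, expanding $\sN$ via its multipliers $\sQ,\sC$, and using Proposition \ref{prop:LinBdd} to invert the linear symbols on the relevant invariant subspaces, I obtain at leading order in $\a,\kappa,\mu$
\begin{equation*}
S(0,0)Z_0 + \widehat{\sN(V)}(0) = 0, \qquad (S(2k_*,0)+2ik_* d_*) Z_2 + \widehat{\sN(V)}(2) = 0,
\end{equation*}
with error $O(\a^3,\a^2\kappa,\a^2\mu)$. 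These are precisely the equations \eqref{eq:e20}, \eqref{eq:e22}, and more generally \eqref{eq:GenPsi}, whose solutions are the MSE quantities $\Psi_0$ and $\Psi_2$. Under the identification $\a\leftrightarrow\e A$, the leading orders of $Z_0$ and $Z_2$ thus coincide with those of $\e^2\Psi_0$ and $\e^2\Psi_2$.

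Next, I would read off $\g_{LS}$. By construction (cf.\ \eqref{eq:Epseqn1}--\eqref{eq:Epseqn2}), $n(|\a|^2;\mu,k,d)\a = \ell\,\widehat{\sN(\tilde u)}(1)$, so $\g_{LS}$ is the coefficient of $|\a|^2\a$ in the right-hand side at $\mu=0$, $k=k_*$, $d=d_*$. Expanding $\sN(V+W+X)$ via the multipliers, the cubic-in-$\a$ contributions to $\widehat{\sN(\tilde u)}(1)$ come from (i) bilinear pairings of $V$ (modes $\pm 1$) with $Z_0$, (ii) the bilinear pairing of $\bar V$ (mode $-1$) with $Z_2$, and (iii) the trilinear pairing $\sC(V,V,\bar V)$ restricted to mode $1$. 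Substituting the expressions for $Z_0, Z_2$ obtained above and applying $\ell$ reproduces, term by term, the three summands of \eqref{eq:generalgamma}, so $\g_{LS}=\g_{\mathbb{C}GL}$.

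The main obstacle will be verifying that no other cubic-in-$\a$ contributions to $\ell\widehat{\sN(\tilde u)}(1)$ survive at $\mu=\kappa=0$. Specifically: (a) the mode-$1$ part of $W$ is $O(\kappa\a)$ by \eqref{eq:ExactPsi1}, so its bilinear pairing with $V$ contributes only $O(\kappa\a^2)$, absent at $\kappa=0$; (b) the $k$- and $\mu$-dependencies of $\sN$ contribute at strictly higher order, since $\d_k^j \sN(0,k)=0$ for all $j$ as observed at the start of the proof of Lemma \ref{lem:GeneralGamma}; (c) Fourier modes $|\eta|\ge 3$ of $W+X$ are $O(\a^3)$ and hence do not enter the cubic coefficient. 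Bookkeeping the factors of $\tfrac12$ from the Ansatz together with the symmetry of $\sQ$ and $\sC$ then identifies the two formulas exactly.
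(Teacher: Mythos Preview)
Your proposal is correct and follows essentially the same approach as the paper: both compute the mode-$0$ and mode-$2$ components of the LS solution at second order, verify that they coincide with the MSE quantities $\Psi_0$ and $\Psi_2$, and then substitute into the mode-$1$ part of the Taylor-expanded nonlinearity to recover the formula \eqref{eq:generalgamma}. The only cosmetic difference is that the paper organizes the second-order computation via $\partial_\e^2\tilde u_{0,\omega}$ (differentiating the LS solution in $\e$ at $\e=0$), whereas you read off $Z_0,Z_2$ directly from the projected equations; your explicit checklist (a)--(c) ruling out spurious cubic contributions corresponds to the paper's remarks that the $\partial_k D_u^2\sN$ and $\partial_\mu D_u^2\sN$ terms are Fourier-supported away from mode $1$ or of order $\e^4$.
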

	\begin{proof}
		It will suffice to show that $\widehat{\sN(\tilde{u}_{\e,\o};k,\mu)}(1)=\g_{\CC GL}|\a|^2\a+h.o.t.$. To this end, we Taylor expand $\sN$ as
		\begin{equation}
			\begin{split}
			\sN(\tilde{u}_{\e,\o};k,\mu)=\frac{1}{2}D_u^2\sN(0,k_*,0)(\tilde{u}_{\e,\o},\tilde{u}_{\e,\o})+\frac{1}{6}D_u^3\sN(\tilde{u}_{\e,\o},\tilde{u}_{\e,\o},\tilde{u}_{\e,\o})+\\
			+\frac{1}{2}\o\e\d_k D_u^2\sN(0,k_*,0)(\tilde{u}_{\e,\o},\tilde{u}_{\e,\o})+\frac{1}{2}\mu\d_\mu D_u^2\sN(0,k_*,0)(\tilde{u}_{\e,\o},\tilde{u}_{\e,\o})+h.o.t.
			\end{split}
		\end{equation}
		By construction, $\tilde{u}_{\e,\o}=\e(\frac{1}{2}\a e^{i\xi}r)+c.c.+\cO(\e^2)$, so because each form in the above expansion is a multilinear Fourier multiplier operator we see that in Fourier mode 1 the smallest power of $\e$ is $\e^3$. Moreover, the forms on the latter line don't contribute because $\d_k D_u^2\sN(0,k_*,0)$ would need to contribute an $\e^2e^{i\xi}$ because it's weighted by $\e$, but this can't happen as to get power $\e^2$ one needs to apply $\d_k D_u^2\sN(0,k_*,0)$ to $\frac{1}{2}\a e^{i\xi}r+c.c.$ in both slots. However, because it's a multilinear Fourier multiplier operator, $\d_k D_u^2\sN(0,k_*,0)(\frac{1}{2}\a e^{i\xi}r+c.c.,\frac{1}{2}\a e^{i\xi}r+c.c.)$ is Fourier supported in $\{0,\pm 2\}$. The other multilinear form in the second line is at least order $\e^4$ since $\mu\sim \e^2$. So, we get that
		\ba\label{eq:nonlocalTaylor}
\widehat{\sN(\tilde{u}_{\e,\o};k,\mu)}(1)&= 
D_u^2\sN(0,k_*,0)(\frac{1}{2}\a e^{i\xi}r,\widehat{\tilde{u}_{\e,\o}}(0))
+D_u^2\sN(0,k_*,0)(\bar{\a}e^{-i\xi}\bar{r},\widehat{\tilde{u}_{\e,\o}}(2)) 
			\\&\quad
			+\frac{1}{16}D_u^3\sN(\a e^{i\xi }r,\a e^{i\xi}r,\bar{\a}e^{-i\xi}\bar{r})
			+h.o.t.
				\ea
				where we've used the symmetry of the forms.

		Thus, to prove the claim we need to compute $\frac{1}{2}\widehat{\tilde{u}_{\e,\o}}(0)$, $\widehat{\tilde{u}_{\e,\o}}(2)$ and verify that they match $\Psi_0$, $\Psi_2$ to lowest order. In particular, it will suffice to compute $\widehat{\d_\e^2\tilde{u}_{0,\o}}(0), \widehat{\d_\e^2\tilde{u}_{0,\o}}(2)$. By construction, we have
		\begin{equation}\label{eq:tildeueqn}
		L(k,\mu)\tilde{u}_{\e,\o}+d(k,\mu)k\d_\xi\tilde{u}_{\e,\o}+D_u^2\sN(0,k_*,0)(\tilde{u}_{\e,\o},\tilde{u}_{\e,\o})+\cO(\e^3)=0.
		\end{equation}
		Observe that the nonlinearity is $\frac{1}{4}\e^2D_u^2\sN(0,k_*,0)(\a e^{i\xi}r+c.c,\a e^{i\xi}r+c.c)+\cO(\e^3)$, hence to $\cO(\e^2)$ we only have Fourier modes $\{0,\pm1,\pm2 \}$. Plugging in the Taylor series for $\tilde{u}_{\e,\o}$ and Taylor expanding the symbol in \eqref{eq:tildeueqn} shows that
		\begin{equation}
		\frac{1}{2}S(0,0)\widehat{\d_\e^2\tilde{u}_{0,\o}}(0)+\frac{1}{8}|\a|^2\left[\cQ(1,-1)(r,\bar{r})+\cQ(-1,1)(\bar{r},r) \right]=0,
		\end{equation}
		or, equivalently, using the symmetry of $\cQ$,
		\begin{equation}\label{eq:mode0}
		\widehat{\d_\e^2\tilde{u}_{0,\o}}(0)=-\frac{1}{4}|\a^2|S_0\left[\cQ(1,-1)(r,\bar{r})+\cQ(-1,1)(\bar{r},r) \right]=-\frac{1}{2}|\a^2|S_0\Re\cQ(1,-1)(r,\bar{r}),
		\end{equation}
		where $\sQ$ is as in lemma \ref{lem:GeneralGamma}.

		Similarly, we have
		\begin{equation}
		\frac{1}{2}\left[S(2k_*,0)+2ik_*d_* \right]\widehat{\d_\e^2\tilde{u}_{0,\o}}(2)+\frac{1}{8}\a^2\cQ(1,1)(r,r)=0,
		\end{equation}
		or equivalently
		\begin{equation}\label{eq:mode2}
		\widehat{\d_\e^2\tilde{u}_{0,\o}}(2)=-\frac{1}{4}\a^2S_2\cQ(1,1)(r,r).
		\end{equation}
		Plugging \eqref{eq:mode0} and \eqref{eq:mode2} into \eqref{eq:nonlocalTaylor} we get
		\begin{equation}
		\begin{split}
			\widehat{\sN(\tilde{u}_{\e,\o};k,\mu)}(1)&=\e^2|\a|^2\a\left(\sQ(1,0)(\frac{1}{2}r,-\frac{1}{4}S_0\Re\cQ(1,-1)(r,\bar{r}))
			+\sQ(-1,2)(\frac{1}{2}\bar{r},-\frac{1}{8}S_2\cQ(1,1)(r,r))\right.
			\\
			&\left.+\frac{1}{16}\sC(1,1,-1)(r,r,\bar{r})
			+h.o.t.\right).
		\end{split}
		\end{equation}
		
		Comparing with the formula for $\g$ in \eqref{eq:generalgamma}, 
		we have the result. Note the extra $\frac{1}{2}$'s come from $D_u^2\sN(0,k_*)(\frac{1}{2}\a e^{i\xi}r+c.c.,\frac{1}{2}\frac{\d^2\tilde{u}_{0,\o}}{\d\e^2})$.
	\end{proof}
	\begin{corollary}\label{mainprop}
		Theorem \ref{mainLS} holds for $\cN:\RR^n\rightarrow\RR^n$ a smooth function of quadratic order in $u$.
	\end{corollary}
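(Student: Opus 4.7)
The plan is to deduce this case as a direct specialization of the machinery already developed in Section \ref{sec:LSRed}, with the matching of Ginzburg-Landau coefficients from Subsection \ref{subsec:TwoGammas} providing the explicit leading-order expansion.

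First I would verify that $\cN:\RR^n\to\RR^n$, smooth and quadratic at $0$, satisfies Hypothesis \ref{hyp:nonlin}. Translation invariance is automatic since $\cN$ acts pointwise and carries no $x$-dependence, so $\tau_y\cN(u)=\cN(\tau_y u)$. Because $\cN$ involves no spatial derivatives, the auxiliary map $\sN(u,k,\mu)=I_k^{-1}\cN(I_k u)$ is simply $\cN$ itself, independent of $k$; smoothness as a map $H^s_{per}([0,2\pi];\RR^n)\to L^2_{per}([0,2\pi];\RR^n)$ follows from the composition rule for Nemytskii operators on Sobolev spaces, for any $s>1/2$. Finally, $\cN(0)=D_u\cN(0)=0$ by the quadratic order assumption.

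Next, I would apply Theorem \ref{thm:LSReduction} (noting that the technical estimates of Lemma \ref{lem:NonlinL2} and Proposition \ref{prop:FreDiff} are trivially easier here, since $\cN$ is a pointwise smooth function vanishing to order $|u|^2$: the bounds $\|\cN(U)\|_{L^2}\leq C\|U\|_{H^m}^2$ and Fr\'echet differentiability reduce to Moser-type estimates on $C^0$-valued smooth maps of Sobolev arguments). This yields, for each fixed $\tilde\kappa$ with $|\tilde\kappa|\leq\tilde\kappa_0$ and $\eps$ sufficiently small, a unique (up to the $SO(2)$-orbit choice of $\alpha$) nontrivial small solution $\bar U^\eps(kx+\bar\Omega t)$ of \eqref{std} with wave number $k=k_*+\eps\tilde\kappa$, of the form
\begin{equation}
\bar U^\eps(z)=\tfrac12\eps\tilde\alpha e^{iz}r+c.c.+\cO(\eps^2),
\end{equation}
where $|\tilde\alpha|$ is determined by \eqref{eq:alphaeqn} with $\gamma_{LS}$ in place of $\gamma$. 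The existence/nonexistence dichotomy in the range condition $\kappa^2\lessgtr(1\mp\nu_0)\,2\Re\d_\mu\tl(k_*,0)/\d_k^2\Re\tl(k_*,0)$ follows because \eqref{eq:alphaeqn} admits a positive real solution precisely when the numerator has sign opposite to $\Re\gamma$, with a safety margin $\nu_0$ absorbing the $\cO(\eps)$ correction.

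To complete the identification with the expansion asserted in Theorem \ref{mainLS}, I would invoke the equality $\gamma_{CGL}=\gamma_{LS}$ from the theorem of Subsection \ref{subsec:TwoGammas}. This shows that the $|\tilde\alpha|$ produced by \eqref{eq:alphaeqn} coincides with the $|\alpha|$ produced by the nonlinear dispersion relation \eqref{alphaomega}, making the Lyapunov-Schmidt solution match the Ginzburg-Landau approximation to leading order. The wave speed $\bar\Omega$ is obtained by solving the imaginary part of \eqref{eq:Epseqn2} for $\delta$ via the implicit function theorem (using $k_*\neq 0$); unwinding the scaling $\mu=\eps^2$, $\kappa=\eps\tilde\kappa$ and using $d_*=-\Im\tl(k_*,0)/k_*$ yields precisely
\begin{equation}
\bar\Omega=\Im\tl(k_*,0)+\eps\kappa\,\d_k\tl(k_*,0)+\eps^2\omega+\cO(\eps^3),
\end{equation}
with $\omega$ as in \eqref{alphaomega}. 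Finally, in the $O(2)$-symmetric case the additional reflection invariance forces $\tl(k_*,0)\in i\RR$ with $\Im\tl(k_*,0)=0$ (hence $d_*=0$) and makes $n(|\alpha|^2;\mu,k,d)$ real-valued, so $\delta\equiv 0$ solves the imaginary part identically and $\bar\Omega=0$; moreover the reduced scalar equation admits a real solution $\tilde\alpha\in\RR$, so that complex conjugation symmetry of $\bar U^\eps$ combined with translation fixes an even profile. The main potential obstacle—verifying that the general nonlocal framework of Section \ref{sec:LSRed} applies—is in fact trivial here: there is no derivative-dependent nonlinearity to handle and no quasilinear structure to exploit, so the arguments of Section \ref{sec:LSRed} specialize immediately without modification.
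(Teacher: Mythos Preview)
Your proposal is correct and takes essentially the same approach as the paper: invoke Theorem \ref{thm:LSReduction} for existence and the leading-order expansion, then use the equality $\gamma_{\CC GL}=\gamma_{LS}$ to identify the amplitude with the Ginzburg-Landau prediction \eqref{alphaomega}. You are in fact more thorough than the paper's proof, explicitly checking Hypothesis \ref{hyp:nonlin}, deriving the wave-speed expansion from the imaginary part of \eqref{eq:Epseqn2}, and addressing the $O(2)$-symmetric case, all of which the paper leaves implicit.
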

	
	\begin{proof}
		Recall the expansion of $\tilde{u}_{\e,\o}$ from Theorem \ref{thm:LSReduction}.
		\begin{equation}
			\tilde{u}_{\e,\o}=\frac{1}{2}\e\sqrt{\frac{-\frac{1}{2}\o^2\d_k^2\tl(k_*,0)
				-\Re\d_\mu\tl(k_*,0)\tilde{\mu} }{\Re\g_{LS}}}e^{i\xi}r+c.c.+\cO(\e^2)
		\end{equation}
		Applying the preceding theorem $\g_{\CC GL}=\g_{LS}$, we see that the leading order amplitude of $\tilde{u}_{\e,\o}$ matches the one predicted by (cGL) as desired.
	\end{proof}
	\begin{remark}
		With a bit more work using the proof of $\g_{\CC GL}=\g_{LS}$ and making the correct choice for $\ell \Psi_1$ in the Ansatz \eqref{eq:Ansatz}, one can actually show that the second order (in $\e$) terms in the solution from Lyapunov-Schmidt agrees with the corresponding terms in the Ansatz as well.
	\end{remark}


\end{document}